\definecolor{webgreen}{rgb}{0,.5,0}
\definecolor{webbrown}{rgb}{.6,0,0}
\tikzset{circle node/.style = {circle,inner sep=1pt,draw, fill=white},
        X node/.style = {fill=white, inner sep=1pt},
        dot node/.style = {circle, draw, inner sep=5pt}
        }
\newtheorem{theorem}{Theorem}
\newtheorem{proposition}[theorem]{Proposition}
\newtheorem{corollary}[theorem]{Corollary}
\newtheorem{conjecture}[theorem]{Conjecture}
\theoremstyle{definition}
\newtheorem{example}[theorem]{Example}
\DeclareMathOperator{\sgn}{sgn}
\newcommand{\seqnum}[1]{\href{http://oeis.org/#1}{\underline{#1}}}
\begin{document}

\begin{center}
\vskip 1cm{\LARGE\bf Generalized Catalan recurrences, Riordan arrays, elliptic curves, and orthogonal polynomials} \vskip 1cm \large
Paul Barry\\
School of Science\\
Waterford Institute of Technology\\
Ireland\\
\href{mailto:pbarry@wit.ie}{\tt pbarry@wit.ie}
\end{center}
\vskip .2 in

\begin{abstract} We show that the Catalan-Schroeder convolution recurrences and their higher order generalizations can be solved using Riordan arrays and the Catalan numbers. We investigate the Hankel transforms of many of the recurrence solutions, and indicate that Somos $4$ sequences often arise. We exhibit relations between recurrences, Riordan arrays, elliptic curves and Somos $4$ sequences. We furthermore indicate how one can associate a family of orthogonal polynomials to a point on an elliptic curve, whose moments are related to recurrence solutions.\end{abstract}

This paper, which concerns generalized Catalan recurrences, their solutions using Riordan arrays, and applications to elliptic curve sequences, is arranged in the following sections.
\begin{enumerate}
\item A motivating example
\item Preliminaries
\item Generalized Catalan recurrences
\item A third order recurrence
\item A further recurrence
\item Conversion of parameters
\item From elliptic curve to recurrences and Somos sequences
\item The family $E_t : y^2+4xy+y=x^3+(t-1)x+tx$
\item The case of $E: y^2+axy+y=x^3+bx^2+cx$
\item Conclusions
\item Appendix: Orthogonal polynomials, Riordan arrays and the Hankel transform.
\end{enumerate}
\section{A motivating example}
The elliptic curve
$$E: y^2-xy+y=x^3-2x+x$$ passes through the point $P(0,0)$. The division polynomials of this curve, $\psi_n$, evaluated at the multiples $nP$ of the point $P(0,0)$ are given by
$$\psi_n(0,0)=(-1)^{\binom{n}{2}}F_n,$$ where $F_n$ is the $n$-th Fibonacci number \seqnum{A000045}. A consequence of this is that the coordinates of $nP$ are
$$x_n=x(nP)=\frac{F_n F_{n+2}}{F_{n+1}^2},$$ and
$$y_n=y(nP)=\frac{(-1)^n F_n}{F_{n+1}^3}.$$
The ratio $y_n/x_n$ is then given by
$$\frac{y_n}{x_n}=\frac{(-1)^n}{F_{n+1}F_{n+2}}.$$
We now form a generating function expressed as a continued fraction
$$\tilde{a}(x)=g_{E,P}(t)=\cfrac{1}{1+t+
\cfrac{x_1t^2}{1+\frac{y_1}{x_1}t+
\cfrac{x_2t^2}{1+\frac{y_2}{x_2}t+
\cfrac{x_3t^2}{1+\frac{y_3}{x_3}t+\cdots}}}}.$$
In this case, the generating function $g_{E,P}(t)$ expands to give the sequence $\tilde{a}_n$ that begins
$$1, -1, -1, 2, 2, -5, -5, 14, 14, -42, -42,\ldots.$$ The general term of this sequence is
$$\tilde{a}_n=(-1)^{\binom{n+1}{2}} C_{\lfloor \frac{n+1}{2} \rfloor}.$$ Here, we have
$C_n=\frac{1}{n+1} \binom{2n}{n}$ is the $n$-th Catalan number.

The Hankel transform $h_n=|a_{i+j}|_{0\le i,j \le n}$ of $\tilde{a}_n$ begins
$$1, -2, -3, 5, 8, -13, -21, 34, 55, -89, -144,\ldots,$$ with general term
$$h_n = (-1)^{\binom{n+1}{2}}F_{n+2}.$$
Thus we have $$h_n = (-1)^{n+1} \psi_{n+2}(0,0).$$

On the other hand, we can solve the equation $y^2-xy+y=x^3-2x+x$ for $y$.
We find that
$$y=\frac{(1-x)(\sqrt{1+4x}-1}{2}=(1-x)xc(x),\quad\text{or}\quad y=-\frac{(1-x)(1+\sqrt{1+4x})}{2}.$$
Here, $$c(x)=\frac{1-\sqrt{1-4x}}{2x}$$ is the generating function of the Catalan numbers.
The first solution expands to give the sequence that begins
$$0, 1, -2, 3, -7, 19, -56, 174, -561, 1859, -6292,$$ while the second solution begins
$$-1, 0, 2, -3, 7, -19, 56, -174, 561, -1859, 6292,\ldots.$$
We are interested in the common part of these sequences, namely the sequence that begins
$$2, -3, 7, -19, 56, -174, 561, -1859, 6292,\ldots.$$
This has its generating function given by
$$f(x)=\left(-\frac{(1-x)(1+\sqrt{1+4x})}{2}+1\right)/x^2=\frac{1+x-(1-x)\sqrt{1+4x}}{2x^2}.$$
We wish to work with a sequence with initial term $1$, which has essentially the same Hankel transform as this sequence. Thus we take the generating function
$$f_1(x)=\frac{1}{1-x-x^2 f(x)}=\frac{2}{1-3x+(1-x)\sqrt(1+4x)}=\frac{-(x-1)\sqrt{1+4x}-3x+1}{2x(x^2-4x+2)}.$$
Using the Fundamental Theorem of Riordan arrays, we can express this as
$$f_1(x)=\frac{1}{1-3x}c\left(\frac{-x(x^2-4x+2)}{(1-3x^2)^2}\right)=\left(\frac{1}{1-3x},\frac{-x(x^2-4x+2)}{(1-3x^2)^2}\right)\cdot c(x).$$
We now revert $xf_1(x)$, and divide the result by $x$, to obtain the generating function $a(x)$ given by
$$a(x)=\frac{1+3x+4x^2-(1+x)\sqrt{1+4x+8x^2}}{2x^3}.$$
This expands to give a sequence $a_n$ that begins
$$1, -1, -1, 8, -22, 33, 7, -212, 702, -1202, -58,\ldots.$$
This sequence satisfies the convolution recurrence (generalized Catalan recurrence)
$$a_n=-3 a_{n-1}-4 a_{n-2} + 2 a_{n-3} + \sum_{k=1}^{n-4} a_k a_{n-k-3},$$ with
$a_0=1, a_1=-1, a_2=-1, a_3=8$.
The Hankel transform of this sequence begins
$$1, -2, -3, 5, 8, -13, -21, 34, 55, -89, -144,\ldots.$$
Thus we now have a sequence $a_n$, constructed from the elliptic curve equation, that has the same Hankel transform as $\tilde{a}_n$. Moreover, we know the generating function of $a_n$. We can express $a(x)$ as
$$a(x)=\left(\frac{1+2x}{1+3x+4x^2},\frac{x^3(1+2x)}{(1+3x+4x^2)^2}\right)\cdot c(x).$$
It remains to relate $\tilde{a}(x)=g_{E,P}(x)$ to $a(x)$.

In this special case, the solution is quite easy to state.
We have
$$\tilde{a}_n = \sum_{k=0}^n \binom{n}{k}(-2)^{n-k} (-1)^k a_k.$$
In other words, the generating function $g_E(x)$ of $\tilde{a}_n=(-1)^{\binom{n+1}{2}} C_{\lfloor \frac{n+1}{2} \rfloor}$ is given by taking the second inverse binomial transform of the generating function of the sequence $(-1)^n a_n$.
We can verify this algebraically as follows. We take the generating function
$$a(x)=\frac{1+3x+4x^2-(1+x)\sqrt{1+4x+8x^2}}{2x^3}.$$
We now form the generating function of $(-1)^n a_n$ which is
$$a(-x)=-\frac{1-3x+4x^2-(1-x)\sqrt{1-4x+8x^2}}{2x^3}.$$ Taking the second inverse binomial transform now gives us
$$\frac{1}{1+2x}a\left(\frac{-x}{1+2x}\right)=\frac{(1+x)\sqrt{1+4x^2}-1-x-2x^2}{2x^3}.$$
We can verify independently that this last generating function is indeed the generating function of $\tilde{a}_n=(-1)^{\binom{n+1}{2}}C_{\lfloor \frac{n+1}{2} \rfloor}$.

We have
$$\frac{(1+x)\sqrt{1+4x^2}-1-x-2x^2}{2x^3}=
\cfrac{1}{1+x+
\cfrac{\frac{F_1 F_3}{F_2^2}x^2}{1-\cfrac{1}{F_2 F_3}x+
\cfrac{\frac{F_2 F_4}{F_3^2}x^2}{1+\cfrac{1}{F_3 F_4}x+
\cfrac{\frac{F_3 F_5}{F_4^2}x^2}{1-\cdots}}}}.$$

The sequence $\tilde{a}_n$ is the moment sequence (see the Appendix) of the family of orthogonal polynomials $P_n(t)$ defined by the three term recurrence
\begin{align*}P_n(t)&=(t+\frac{x_{n-1}}{y_{n-1}})P_{n-1}(t)+x_{n-1}P_{n-2}(t)\\
&=\left(t-\frac{(-1)^n}{F_n F_{n+1}}\right)P_{n-1}(t)+\frac{F_{n-1}F_{n+1}}{F_n^2} P_{n-2}(t),\end{align*}
with $P_0(t)=1, P_1(t)=t+1$.

The coefficient array of the family $P_n(t)$ then begins
$$\left(
\begin{array}{ccccccc}
 1 & 0 & 0 & 0 & 0 & 0 & 0 \\
 1 & 1 & 0 & 0 & 0 & 0 & 0 \\
 3/2 & 1/2 & 1 & 0 & 0 & 0 & 0 \\
1 & 7/3 & 2/3 & 1 & 0 & 0 & 0 \\
 8/5 & 7/5 & 17/5 & 3/5 & 1 & 0 & 0 \\
 1 & 31/8 & 17/8 & 35/8 & 5/8 & 1 & 0 \\
 21/13 & 31/13 & 95/13 & 35/13 & 70/13 & 8/13 & 1 \\
\end{array}
\right).$$

The inverse of this matrix, which is the moment matrix, begins
$$\left(
\begin{array}{ccccccc}
 1 & 0 & 0 & 0 & 0 & 0 & 0 \\
 -1 & 1 & 0 & 0 & 0 & 0 & 0 \\
 -1 & -1/2 & 1 & 0 & 0 & 0 & 0 \\
2 & -2 & -2/3 & 1 & 0 & 0 & 0 \\
 2 & 3/2 & -3 & -3/5 & 1 & 0 & 0 \\
 -5 & 5 & 8/3 & -4 & -5/8 & 1 & 0 \\
-5 & -9/2 & 9 & 3 & -5 & -8/13 & 1 \\
\end{array}
\right).$$
This exhibits the sequence $\tilde{a}_n$ as a moment sequence.

We note that the denominators in the coefficient matrix of the orthogonal polynomials are the Fibonacci numbers. Scaling up by these, we get the  integer matrix that begins as follows.

$$\left(
\begin{array}{ccccccc}
 1 & 0 & 0 & 0 & 0 & 0 & 0 \\
 1 & 1 & 0 & 0 & 0 & 0 & 0 \\
 3 & 1 & 2 & 0 & 0 & 0 & 0 \\
3 & 7 & 2 & 3 & 0 & 0 & 0 \\
 8 & 7 & 17 & 3 & 5 & 0 & 0 \\
 8 & 31 & 17 & 35 & 5 & 8 & 0 \\
 21 & 31 & 95 & 35 & 70 & 8 & 13 \\
\end{array}
\right).$$
The row sums of this matrix are given by $F_{n+1}F_{n+2}$.

The generating function of $\tilde{a}_n$ can be described using Riordan arrays as follows.
$$\tilde{a}(x)=\left(\frac{1}{1+x+2x^2}, \frac{-x^3}{(1+x+2x^2)^2}\right)\cdot c(x).$$
A consequence of this is that the sequence $\tilde{a}_n$ satisfies the convolution recurrence
$$\tilde{a}_n=-\tilde{a}_{n-1}-2 \tilde{a}_{n-2}-2 \tilde{a}_{n-3}-\sum_{k=1}^{n-4} \tilde{a}_k \tilde{a}_{n-k-3},$$ with
$a_0=1, a_1=-1, a_2=-1, a_3=2$.

\section{Preliminaries}

The product of two power series
$$a(x)=a_0+a_1 x + a_2 x^2+ a_3 x^3 + \cdots=\sum_{n=0}^{\infty}a_n x^n$$ and
$$b(x)=b_0+b_1 x + b_2 x^2+ b_3 x^3 + \cdots=\sum_{n=0}^{\infty} b_n x^n$$ is given by
$$a(x)b(x)=a_0 b_0 + (a_0 b_1+ a_1 b_0)x + (a_0 b_1+ a_1 b_1 + a_1 b_0) x^2 + \cdots.$$
That is,
$$a(x)b(x)=\sum_{n=0}^{\infty} \left(\sum_{k=0}^n a_k b_{n-k}\right)x^n.$$
The term $\sum_{k=0}^n a_k b_{n-k}$ is called the convolution of $a_0,a_1,\cdots,a_n$ with $b_0,b_1,\ldots,b_n$. Note that if we multiply the shifted sequences
$a_1+a_2 x+ a_3 x^2+ \cdots$ and $b_1+b_2 x+ b_3 x^2+\cdots$, then the product will begin
$$a_1 b_1 + (a_1 b_2+ a_2 b_1)x+ (a_1 b_3+a_2 b_2+a_3 b_1)x^2 + \cdots,$$ or
$$\sum_{n=0}^{\infty} \left(\sum_{k=1}^{n+1} a_{k} b_{n+2-k}\right)x^n.$$ This can be also be written as
$$\sum_{n=0}^{\infty} \left(\sum_{k=0}^n a_{k+1} b_{n+1-k}\right)x^n.$$
Expressions such as $\sum_{k=0}^n a_k b_{n-k}$ are known as convolutions, and equations involving terms of a sequence and such convolutions are known as convolution recurrences. For instance,
$$C_n = \sum_{k=0}^{n-1} C_k C_{n-1-k},$$ where we stipulate $C_0=1$, is a well known convolution recurrence.
Note that we have $C_1=C_0 C_0$, $C_2=C_0C_1+C_1C_0$, $C_3=C_0C_2+C_1C_1+C_2C_0$, and so on.

Starting from $C_0=1$, we obtain $C_0=1,1,2,5,14,42, \ldots$. These are the Catalan numbers \seqnum{A000108}. If we let $c(x)=\sum_{n=0}^{\infty}C_n x^n$, then we see that
$$C_1+C_2 x + C_3 x^2 + \cdots = C_0 C_0 + (C_0C_1+C_1C_0)x + \ldots = c(x) \times c(x)=c(x)^2.$$
Thus,
$$c(x)=C_0+C_1 x+ C_2 x^2+ \cdots=C_0+x(C_1+ C_2x+C_3x^2+ \ldots)=1+xc(x)^2.$$
This means that $c(x)$, the \emph{generating function} of the Catalan numbers, is a solution of the quadratic equation
$$u = 1+ x u^2, \quad \text{or} \quad x u^2-u+1 =0.$$ We obtain two solutions,
$$u(x)=\frac{1-\sqrt{1-4x}}{2x} \quad \text{and} \quad \frac{1+\sqrt{1-4x}}{2x}.$$
When $x=0$, the second solution leads to a division by zero, so we choose the first solution.
Thus $$c(x)=\frac{1-\sqrt{1-4x}}{2x}$$ is the generating function of the Catalan numbers. In this note, shall seek to solve more generalized convolution recurrences. The Catalan numbers $C_n$ and their generating function $c(x)$ will play an important role in this discussion. This will arise because we will often be solving quadratic equations, as above.
Thus if we consider a quadratic equation such as
$$ a u^2+ b u + c =0$$ then the solution
$$u = \frac{-b + \sqrt{b^2- 4 ac}}{2a}$$ can be expressed as follows. We have
\begin{align*}
\frac{-c}{b} c\left(\frac{ac}{b^2}\right)&=\frac{c}{b}\left(\frac{1-\sqrt{1-4\frac{ac}{b^2}}}{2 \frac{ac}{b^2}}\right)\\
&=\frac{-b}{2a}\left(1-\sqrt{1-4 \frac{ac}{b^2}}\right)\\
&=\frac{1}{2a} \left(-b + b \sqrt{1-4 \frac{ac}{b^2}}\right)\\
&=\frac{1}{2a}\left(-b+\sqrt{b^2-4 ac}\right).\end{align*}

The squared terms that give rise to the quadratic expressions to be solved will in turn be explained in part by the presence of the convolution elements. For instance, if we multiply a shifted power sequence $a_1+a_2 x + a_3 x^2+ \cdots$ by itself, thus getting a power series that begins
$$a_1 a_1 + (a_1 a_2+ a_2 a_1)x+ (a_1 a_3+ a_2 a_2+ a_3 a_1)x^2+ \cdots,$$ then the generating function of this expression will be
$$\frac{a(x)-1}{x} \cdot \frac{a(x)-1}{x}=\frac{(a(x)-1)^2}{x^2}=\frac{a(x)^2-2 a(x)+1}{x^2},$$ where
$$a(x)=\sum_{n=0}^{\infty} a_n x^n$$ is the generating function of the sequence $a_0,a_1,a_2,\ldots$.

The expression $\frac{-c}{b} c\left(\frac{ac}{b^2}\right)$ is reminiscent of the fundamental theorem of Riordan arrays, and in the cases that we shall consider, it will indeed be an instance of this result.  We shall therefore use Riordan arrays \cite{Book, Survey, SGWW} extensively. A Riordan array can be defined by a pair $(g, f)$ of power series
$$g(x)=g_0+g_1 x + g_2 x^2 + \cdots, \quad g_0 \ne 0,$$ and
$$f(x)=f_1 x + f_2 x^2 + f_3 x^3+\cdots,$$ with indeterminate $x$ and coefficients drawn from a suitable ring.
To this pair we can associate the lower-triangular matrix with general $(n,k)$-th term $T_{n,k}$ given by
$$T_{n,k}=[x^n] g(x)f(x)^k.$$
Here, $[x^n]$ is the functional on power series that extracts the coefficient of $x^n$ \cite{MC}.
Thus this matrix is the ``Riordan array'' associated with the pair $(g,f)$. In practice, we refer to either as a Riordan array. The Fundamental Theorem of Riordan Arrays asserts that the action of the pair $(g,f)$ acting on the generating function $h(x)$ as defined by
$$(g(x), f(x))\cdot h(x)=g(x)f(h(x))$$ is mirrored by multiplying the (infinite) vector whose elements are given by the expansion of $h(x)$ by the matrix $(T_{n,k})$.

When $f_1 \ne 0$ the matrices have a non-zero diagonal and hence they are invertible. If $f_1 =0$ we have arrays that are often described as ``stretched'', which are not invertible in the usual sense.  The Fundamental Theorem still holds.

Somos $4$ sequences are defined using the Weierstrass $\sigma$ function for appropriate elliptic curves. The link between this $\sigma$ function and the Hankel transform \cite{KrattL, Kratt, Layman} explains the importance of the Hankel transform in this note. The motivating example shows how the $x-$ and $y-$ coordinates of the multiples of special points on an elliptic curve can give rise to integer sequences whose Hankel transform is then the Somos $4$ sequence for that point. The link to the title of this note is that these sequences satisfy a recurrence of the desired type.

Where we encounter sequences that appear in the On-Line Encyclopedia of Integer Sequences \cite{SL1, SL2} we refer to them by their $Annnnnn$ number.

The $r$-th binomial transform $b_n$ of the sequence $a_n$ has general term $b_n=\sum_{k=0}^n \binom{n}{k}r^{n-k}a_k$. If the sequence $a_n$ has a generating sequence $g(x)$, then the sequence $b_n$ has generating function $\frac{1}{1-rx}g\left(\frac{x}{1-rx}\right)$.

The INVERT$(a)$ transform of the sequence $a_n$ whose generating function is $g(x)$ will have a generating function given by $\frac{g(x)}{1-axg(x)}=(g(x), xg(x))\cdot \frac{1}{1-ax}$.

If the generating function $g(x)$ of a sequence $a_n$ can be expressed as a Stieltjes continued fraction
$$g(x)=\cfrac{1}{1-\alpha_0 x -
\cfrac{\beta_1 x^2}{1-\alpha_1 x -
\cfrac{\beta_2 x^2}{1-\alpha_2 x- \cdots}}},$$ then the $r$-th binomial transform of $a_n$ will have a generating function given by
$$\cfrac{1}{1-(r+\alpha_0) x -
\cfrac{\beta_1 x^2}{1-(r+\alpha_1) x -
\cfrac{\beta_2 x^2}{1-(r+\alpha_2) x- \cdots}}}.$$
The INVERT$(a)$ transform of $a_n$ will have a generating function given by
$$\cfrac{1}{1-(a+\alpha_0) x -
\cfrac{\beta_1 x^2}{1-\alpha_1 x -
\cfrac{\beta_2 x^2}{1-\alpha_2 x- \cdots}}}.$$

The generating function $g(x)$ above will be designated by $\mathcal{J}(\alpha_0, \alpha_1,\ldots;\beta_1, \beta_2,\ldots)$.

If a sequence $a_0, a_1, \ldots$ with generating function $g(x)$ has a Hankel transform that begins $h_0, h_1, \ldots$ then the sequence with generating function $\frac{1}{1-x-x^2 g(x)}$ will have a Hankel transform that begins $1, h_0, h_1, \ldots$. In addition, this new sequence will have its initial term equal to $1$.

More information on orthogonal polynomials, Riordan arrays and Hankel transforms may be found in the Appendix.

\section{Generalized Catalan recurrences}
We let $C_n=\frac{1}{n+1}\binom{2n}{n}$ denote the $n$-th Catalan numbers \seqnum{A000108}. The sequence of Catalan numbers has generating function $c(x)=\frac{1-\sqrt{1-4x}}{2x}$. The Catalan numbers begin
$$1, 1, 2, 5, 14, 42, 132, 429, 1430, 4862, \ldots.$$
The large Schroeder numbers $S_n$ \seqnum{A006318} can then be defined by
$$S_n=\sum_{k=0}^n \binom{n+k}{2k}C_k.$$
These numbers begin
$$1, 2, 6, 22, 90, 394, 1806, 8558, 41586, 206098, \ldots.$$
Using the theory of Riordan arrays, we see that the generating function $S(x)$ of the large Schroeder numbers is given by an application of the Fundamental Theorem of Riordan arrays \cite{Book, SGWW}, which in this case says that
$$S(x)=\left(\frac{1}{1-x}, \frac{x}{(1-x)^2}\right)\cdot c(x)=\frac{1}{1-x} c\left(\frac{x}{(1-x)^2}\right).$$
This follows since the matrix representation of the Riordan array $\left(\frac{1}{1-x}, \frac{x}{(1-x)^2}\right)$ has general term
$$[x^n]\frac{1}{1-x}\frac{x^k}{(1-x)^{2k}}=\binom{n+k}{2k}.$$
Simplifying, we find that
$$S(x)=\frac{1-x-\sqrt{1-6x+x^2}}{2x}.$$
We now consider the generating function $\frac{c(x)+c(-x)}{2}=\frac{\sqrt{1+4x}+\sqrt{1-4x}}{2x}$, which expands to give the sequence
$$1, 0, 2, 0, 14, 0, 132, 0, 1430, 0,16796, \ldots,$$ which is the aerated sequence of Catalan numbers of even index \seqnum{A048990}.
We once again apply the Riordan array $\left(\frac{1}{1-x}, \frac{x}{(1-x)^2}\right)$ to this sequence, resulting in the sequence that begins
$$1, 1, 3, 11, 45, 197, 903, 4279, 20793, 103049, \ldots.$$
The generating function of this sequence, which is called the sequence of little Schroeder numbers $s_n$ \seqnum{A001003}, is then given by
$$\left(\frac{1}{1-x}, \frac{x}{(1-x)^2}\right)\cdot \frac{\sqrt{1+4x}+\sqrt{1-4x}}{2x}=\frac{1+x-\sqrt{1-6x+x^2}}{4x}.$$
We deduce that
$$s_n=\sum_{k=0}^n \binom{n+k}{2k}C_k \frac{1+(-1)^k}{2}=\sum_{k=0}^n \binom{n+2k}{4k}C_{2k}.$$
We can further use Riordan arrays to characterize these two sequences. The theory of Riordan arrays and orthogonal polynomials \cite{Book, classical, Barry_moments} allows us to derive the following result.
\begin{proposition} The large Schroeder numbers $S_n$ are the moments of the family of orthogonal polynomials whose coefficient array is given by the Riordan array
$$\left(\frac{1}{1+2x}, \frac{x}{1+3x+2x^2}\right)$$ and the little Schroeder numbers $s_n$ are the moments of the family of orthogonal polynomials whose coefficient array is given by the Riordan array
$$\left(\frac{1}{1+x}, \frac{x}{1+3x+2x^2}\right).$$
\end{proposition}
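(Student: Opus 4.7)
The plan is to apply the Riordan-array/orthogonal-polynomials correspondence directly. For any $\alpha, \beta, \gamma$ with $\gamma \neq 0$, the Riordan array
$$L_{\alpha,\beta,\gamma} = \left(\frac{1}{1+\alpha x}, \frac{x}{1+\beta x+\gamma x^2}\right)$$
has an inverse whose production matrix is tridiagonal, with $\alpha_0 = \alpha$ and $\alpha_n = \beta$ (for $n\geq 1$) on the main diagonal, $1$ on the superdiagonal, and $\beta_n = \gamma$ (for $n \geq 1$) on the subdiagonal. Consequently $L_{\alpha,\beta,\gamma}$ is the coefficient array of a family of monic orthogonal polynomials with three-term recurrence governed by these constants, and by classical moment theory the first column of $L_{\alpha,\beta,\gamma}^{-1}$ (the moment sequence) has generating function
$$\mu_{\alpha,\beta,\gamma}(x) = \mathcal{J}(\alpha, \beta, \beta, \ldots; \gamma, \gamma, \ldots).$$
The proposition therefore reduces to two continued-fraction identities: $S(x) = \mu_{2,3,2}(x)$ and $s(x) = \mu_{1,3,2}(x)$.

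To verify the first, let $T(x) = \mathcal{J}(3, 3, \ldots; 2, 2, \ldots)$. By self-similarity $T$ satisfies the quadratic $T = 1/(1-3x-2x^2T)$, whose solution regular at the origin is $T(x) = (1-3x-\sqrt{1-6x+x^2})/(4x^2)$. Plugging in,
$$\mu_{2,3,2}(x) = \frac{1}{1-2x-2x^2 T(x)} = \frac{2}{1-x+\sqrt{1-6x+x^2}},$$
and rationalizing the denominator using $(1-x)^2-(1-6x+x^2)=4x$ recovers the closed form $S(x) = (1-x-\sqrt{1-6x+x^2})/(2x)$ derived earlier from the Riordan-array description of the large Schroeder numbers.

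The second identity is entirely parallel, as only $\alpha_0$ has changed from $2$ to $1$ and the same tail $T(x)$ reappears:
$$\mu_{1,3,2}(x) = \frac{1}{1-x-2x^2T(x)} = \frac{2}{1+x+\sqrt{1-6x+x^2}},$$
and rationalization (now using $(1+x)^2-(1-6x+x^2) = 8x$) yields $s(x) = (1+x-\sqrt{1-6x+x^2})/(4x)$. The main obstacle is not the continued-fraction algebra, which is elementary, but rather the preliminary identification of $L_{\alpha,\beta,\gamma}$ as an orthogonal coefficient array with the stated recurrence coefficients. This requires computing the production matrix of $L_{\alpha,\beta,\gamma}^{-1}$ and reading off its tridiagonal form via the $A$- and $Z$-sequences of the Riordan array; this is standard material but is the substantive step behind the otherwise routine continued-fraction verifications above (compare the Appendix and the references \cite{classical, Barry_moments}).
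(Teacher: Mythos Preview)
Your proposal is correct and follows essentially the same route as the paper. The paper itself does not give a self-contained proof of the proposition: it invokes ``the theory of Riordan arrays and orthogonal polynomials \cite{Book, classical, Barry_moments}'' for the coefficient-array/moment correspondence, and then, just as you do, separately verifies the Jacobi continued-fraction expansions $S(x)=\mathcal{J}(2,3,3,\ldots;2,2,\ldots)$ and $s(x)=\mathcal{J}(1,3,3,\ldots;2,2,\ldots)$ by solving $v=1/(1-3x-2x^2v)$ for the common tail and substituting. Your write-up is in fact more explicit than the paper's, since you spell out both the production-matrix step and the rationalization algebra.
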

One consequence of this result is the following.
\begin{corollary} The Hankel transform $H_n=|S_{i+j}|_{0 \le i,j \le n}$ of the large Schroeder numbers, and the Hankel transform $h_n=|s_{i+j}|_{0 \le i,j \le n}$ of the little Schroeder numbers satisfy
$$H_n = h_n = 2^{\binom{n+1}{2}}.$$
\end{corollary}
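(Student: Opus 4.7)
The plan is to deduce the Hankel transform formula directly from the moment representation provided by the preceding proposition, using the classical determinant formula for Hankel transforms of orthogonal polynomial moment sequences.

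First I would recall (as discussed in the Appendix) the standard fact that if a sequence $a_n$ is the moment sequence of a family of monic orthogonal polynomials satisfying the three-term recurrence
$$P_{n+1}(x)=(x-\alpha_n)P_n(x)-\beta_n P_{n-1}(x),\qquad P_0(x)=1,\ P_1(x)=x-\alpha_0,$$
then its Hankel transform is given by
$$|a_{i+j}|_{0\le i,j\le n}=\prod_{k=1}^{n}\beta_k^{\,n+1-k}.$$
This is the key tool.

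Next I would extract the recurrence coefficients from the Riordan coefficient arrays furnished by the proposition. For a Riordan array of the form $\left(\tfrac{1}{1+rx},\tfrac{x}{1+sx+tx^2}\right)$, the theory connecting Riordan arrays to orthogonal polynomials (cited above) shows this is the coefficient array of a family of monic orthogonal polynomials with $\alpha_0=r$, $\alpha_n=s$ for $n\ge 1$, and $\beta_n=t$ for $n\ge 1$. Applying this to the two arrays in the proposition, I read off $\beta_n=2$ for all $n\ge 1$ in both the large Schroeder case ($r=2$, $s=3$, $t=2$) and the little Schroeder case ($r=1$, $s=3$, $t=2$). The two families differ only in their first recurrence coefficient $\alpha_0$, which does not enter the Hankel transform formula.

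Finally I would substitute into the determinant formula. Since $\beta_k=2$ for all $k\ge 1$ in both cases,
$$H_n=h_n=\prod_{k=1}^{n}2^{\,n+1-k}=2^{\sum_{k=1}^{n}(n+1-k)}=2^{\sum_{j=1}^{n}j}=2^{\binom{n+1}{2}},$$
which is the desired equality. The only real obstacle is ensuring the reader accepts the passage from the Riordan array $\left(\tfrac{1}{1+rx},\tfrac{x}{1+sx+tx^2}\right)$ to the explicit values of $\alpha_n$ and $\beta_n$; this is a standard fact from the theory of Riordan arrays and orthogonal polynomials recalled in the Appendix, so I would simply point to that discussion rather than rederive it here. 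No heavier machinery is needed.
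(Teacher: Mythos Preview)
Your proposal is correct and follows essentially the same route the paper implicitly intends: the corollary is presented as a direct consequence of the preceding proposition, with the Heilermann formula from the Appendix supplying the final step. One small remark: the Appendix states the orthogonal-polynomial Riordan form as $\left(\tfrac{1-\lambda x-\mu x^2}{1+rx+sx^2},\tfrac{x}{1+rx+sx^2}\right)$ rather than the mixed form $\left(\tfrac{1}{1+rx},\tfrac{x}{1+sx+tx^2}\right)$ you cite, so you may want either to note that $1+3x+2x^2=(1+x)(1+2x)$ to match that template, or simply to invoke the continued-fraction expressions $S(x)=\mathcal{J}(2,3,3,\ldots;2,2,\ldots)$ and $s(x)=\mathcal{J}(1,3,3,\ldots;2,2,\ldots)$ that the paper gives immediately afterward, from which $\beta_k=2$ is read off directly.
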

Another corollary, which we can prove independently, is that the generating function $S(x)$ of the large Schroeder numbers has the following Jacobi continued fraction \cite{CFT, Wall} form:
$$S(x)=\cfrac{1}{1-2x-\cfrac{2x^2}{1-3x-\cfrac{2x^2}{1-3x-\cfrac{2x^2}{1-3x-\cdots}}}},$$ which we express as
$$S(x)=\mathcal{J}(2,3,3,3,\ldots;2,2,2,\ldots).$$
We solve for $v$ in the equation
$$v=\frac{1}{1-3x-2x^2 v},$$ then we verify that
$$S(x)=\frac{1}{1-2x-2x^2 v}.$$
We similarly have
$$s(x)=\mathcal{J}(1,3,3,3,\ldots;2,2,2,\ldots).$$
The Schroeder number sequences $S_n$ and $s_n$ each satisfy a simple convolution recurrence. For $S_n$, we have \cite{Qi}
$$S_n=3 S_{n-1}+\sum_{k=0}^{n-3} S_{k+1} S_{n-k-2},$$ with initial conditions
$S_0=1$, $S_1=2$. Note that we can write this as
$$S_n= 3 S_{n-1}+\sum_{k=1}^{n-2} S_k S_{n-k-1}.$$ The summation in the recurrence is then equal to
$$S_1 S_{n-2} + S_2 S_{n-3} + \cdots + S_{n-2}S_1.$$
For $s_n$, we have
$$s_n= 3 s_{n-1}+2\sum_{k=0}^{n-3} s_{k+1} s_{n-k-2},$$ with initial conditions
$s_0=1$, $s_1=1$.
These recurrences, and their generalizations, will be the subject of this note.
\begin{proposition}
We consider the generalized Catalan-Schroeder recurrence
$$a_n=s a_{n-1}+t \sum_{k=0}^{n-3} a_{k+1} a_{n-k-2},$$ with initial conditions
$a_0=1$, $a_1=p$. Then the generating function of the sequence $a_n$ is given by
$$\left(\frac{1+(p-s+t)x}{1-(s-2t)x}, \frac{t x(1+(p-s+t)}{(1-(s-2t)x)^2}\right)\cdot c(x).$$
\end{proposition}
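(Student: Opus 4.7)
The plan is to translate the recurrence into a polynomial equation for $a(x)=\sum_{n\ge 0}a_n x^n$, reduce it to a quadratic, and then verify that the expression $G(x)\cdot c(F(x))$ with $G$ and $F$ from the statement satisfies the same quadratic and has the same constant term.

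First I sum the recurrence against $x^n$ for $n\ge 2$; the values $n=0,1$ are fixed by the initial conditions, and for $n=2$ the convolution is empty so the recurrence consistently gives $a_2=sp$. The linear term contributes $sx(a(x)-1)$. For the convolution, the substitution $j=k+1$ rewrites $\sum_{k=0}^{n-3}a_{k+1}a_{n-k-2}$ as $\sum_{j=1}^{n-2}a_j a_{n-1-j}$, which is precisely $[x^{n-1}](a(x)-1)^2$; summing against $x^n$ therefore produces $tx(a(x)-1)^2$. After expanding $a(x)-1-px=sx(a(x)-1)+tx(a(x)-1)^2$ and collecting, I get the quadratic
$$tx\cdot a(x)^2-\bigl(1-(s-2t)x\bigr)\,a(x)+\bigl(1+(p-s+t)x\bigr)=0.$$

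Next I extract the quadratic implicit in any expression of the form $a(x)=G(x)c(F(x))$. Writing $v=c(F)=a/G$ and applying $v=1+Fv^2$ gives $aG=G^2+Fa^2$, i.e.\ $F\,a^2-G\,a+G^2=0$. With $G(x)=\frac{1+(p-s+t)x}{1-(s-2t)x}$ and $F(x)=\frac{tx(1+(p-s+t)x)}{(1-(s-2t)x)^2}=\frac{tx}{1-(s-2t)x}\,G(x)$, this equation is
$$\frac{1+(p-s+t)x}{(1-(s-2t)x)^2}\Bigl[tx\cdot a(x)^2-(1-(s-2t)x)a(x)+(1+(p-s+t)x)\Bigr]=0,$$
whose bracket is exactly the quadratic obtained from the recurrence. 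So both candidate series satisfy the same equation.

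To fix the branch, I note that both $a(x)$ defined by the recurrence and $G(x)c(F(x))$ have constant term $1$ (since $G(0)=1$ and $c(0)=1$); the other root of the quadratic has a pole at $x=0$ because the leading coefficient $tx$ vanishes there. Two power-series solutions of a quadratic agreeing in their constant terms must coincide, so the identification is complete. No step is genuinely hard; the one point requiring care is the index-shift that rewrites the convolution as $[x^{n-1}](a(x)-1)^2$, and the algebraic matching of the Riordan quadratic against the recurrence quadratic after clearing the common factor $\frac{1+(p-s+t)x}{(1-(s-2t)x)^2}$.
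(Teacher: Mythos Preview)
Your proof is correct and follows essentially the same approach as the paper: both translate the recurrence into the functional equation $u=1+px+sx(u-1)+tx(u-1)^2$, which is your quadratic $tx\,u^2-(1-(s-2t)x)u+(1+(p-s+t)x)=0$. The only presentational difference is that the paper solves the quadratic explicitly via the quadratic formula and then identifies the radical expression with the Riordan form (using the identity $-\tfrac{c}{b}\,c(\tfrac{ac}{b^2})=\tfrac{-b+\sqrt{b^2-4ac}}{2a}$ established in the preliminaries), whereas you verify directly from the Catalan functional equation $v=1+Fv^2$ that $G\,c(F)$ satisfies the same quadratic and then match the constant term to select the branch; this is a minor variation, not a different method.
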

Note that when $p=2$, $s=3$ and $t=1$, we obtain
$$\left(\frac{1+(2-3+1)x}{1-(3-2)x}, \frac{ x(1+(2-3+1)x)}{(1-(3-2)x)^2}\right)\cdot c(x)=\left(\frac{1}{1-x}, \frac{ x}{(1-x)^2}\right)\cdot c(x)=S(x),$$ while when $p=1$, $s=3$ and $t=2$, we obtain
$$\left(\frac{1+(1-3+2)x}{1+(2\cdot 2-3)x}, \frac{2 x(1+(1-3+2)x)}{(1+(2\cdot 2-3)x)^2}\right)\cdot c(x)=
\left(\frac{1}{1+x}, \frac{2 x}{(1+x)^2}\right)\cdot c(x)=s(x).$$
Note that this last result gives us that
$$s_n=\sum_{k=0}^n \binom{n+k}{2k}(-1)^{n-k}2^k C_k.$$
\begin{proof}
The terms $a_1a_{n-2}+\cdots+a_{n-2}a_1$, for $n \ge 3$, contribute successively the terms
$$a_1a_1,$$
$$a_1a_2+a_2a_1,$$
$$a_1a_3+a_2a_2+a_3a_1,$$
$$\cdots.$$
These correspond to the coefficients in the product
$$(a_1+a_2 x + a_3 x^2+ \cdots)(a_1+a_2x+ a_3 x^2+\cdots).$$
This product has generating function
$$\left(\frac{f(x)-1}{x}\right)^2=\frac{f(x)^2-2f(x)+1}{x^2}.$$
Thus we can convert the recurrence
$$a_n=s a_{n-1}+t \sum_{k=0}^{n-3} a_{k+1} a_{n-k-2}, \quad n\ge 2$$ into the following statement about the generating function $u(x)=\sum_{n=0}^{\infty} a_n x^n=1+px+ \sum_{n=2}^{\infty} a_n x^n$.
$$u=1+px+sx(u-1)+tx(u^2-2u+1).$$
Solving this, we get
$$u(x)=\frac{1-(s-2t)x-\sqrt{1-2sx+(s^2-4pt)x^2}}{2tx}.$$
This is equal to
$$\left(\frac{1+(p-s+t)x}{1-(s-2t)x}, \frac{tx(1+(p-s+t)x)}{1-(s-2t)x}\right)\cdot c(x).$$
\end{proof}
\begin{corollary} The generating function
$$\left(\frac{1+ax}{1+bx}, \frac{m x(1+ax)}{(1+bx)^2}\right)\cdot c(x)$$ expands to give the sequence solution of the convolution recurrence
$$a_n = (-b+2m) a_{n-1}+ m \sum_{k=0}^{n-3} a_{k+1}a_{n-k-3},$$ with $a_0=1, a_1=a-b+m$.
\end{corollary}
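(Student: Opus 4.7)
The plan is to recognize the corollary as an immediate reparameterization of the preceding proposition, so the work reduces to matching coefficients. Concretely, I would set $s = -b + 2m$, $t = m$, and $p = a - b + m$, and then check that each component of the Riordan array in the proposition transforms into the corresponding component of the Riordan array in the corollary.

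First I would verify the two coordinate functions. From $s - 2t = -b + 2m - 2m = -b$, the denominator factor $1 - (s-2t)x$ becomes $1 + bx$, so the second component denominator $(1-(s-2t)x)^2$ becomes $(1+bx)^2$. From $p - s + t = (a-b+m) - (-b+2m) + m = a$, the numerator factor $1 + (p-s+t)x$ becomes $1 + ax$. Since $t = m$, the first component $\frac{1+(p-s+t)x}{1-(s-2t)x}$ becomes $\frac{1+ax}{1+bx}$ and the second component $\frac{tx(1+(p-s+t)x)}{(1-(s-2t)x)^2}$ becomes $\frac{mx(1+ax)}{(1+bx)^2}$, matching the corollary.

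Next I would translate the recurrence and initial data under the same substitution. The proposition's recurrence $a_n = s a_{n-1} + t \sum_{k=0}^{n-3} a_{k+1} a_{n-k-2}$ becomes $a_n = (-b+2m)a_{n-1} + m \sum_{k=0}^{n-3} a_{k+1} a_{n-k-2}$, with $a_0 = 1$ and $a_1 = p = a - b + m$, exactly as asserted. (I would also flag that the exponent $a_{n-k-3}$ printed in the corollary statement appears to be a typographical slip for $a_{n-k-2}$, since the algebra of the proposition produces the latter; this should be remarked on rather than proved.)

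There is essentially no obstacle beyond careful bookkeeping: once the identifications $s = -b+2m$, $t = m$, $p = a-b+m$ are made, the corollary follows by direct substitution into the proposition, and no new generating-function manipulation or application of the Fundamental Theorem of Riordan arrays is required. The only subtle point worth double-checking is the sign conventions in the denominators, since the proposition uses $1-(s-2t)x$ with a minus sign while the corollary uses $1+bx$; the substitution $b = 2t - s$ reconciles these.
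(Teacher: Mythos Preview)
Your approach is correct and matches the paper's implicit reasoning: the corollary is stated immediately after the proposition with no separate proof, so it is meant to be read as the inverse reparameterization $s=-b+2m$, $t=m$, $p=a-b+m$, exactly as you carry out. Your identification of $a_{n-k-3}$ as a misprint for $a_{n-k-2}$ is also right, since the proposition's convolution $\sum_{k=0}^{n-3}a_{k+1}a_{n-k-2}=a_1a_{n-2}+\cdots+a_{n-2}a_1$ is what the generating-function argument actually produces.
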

\begin{example} We calculate the general term $T_{n,k}$ of the Riordan array $\left(\frac{1+ax}{1+bx}, \frac{m x(1+ax)}{(1+bx)^2}\right)$.
We have
\begin{align*}
T_{n,k}&=[x^n] \frac{1+ax}{1+bx}\left(\frac{mx(1+ax)}{1+bx}\right)^k\\
&= m^k [x^{n-k}] \frac{(1+ax)^{k+1}}{(1+bx)^{2k+1}}\\
&= m^k [x^{n-k}] \sum_{j=0}^{k+1}\binom{k+1}{j} a^j x^j \sum_{i=0}^{\infty} \binom{-(2k+1)}{i} b^i x^i\\
&= m^k [x^{n-k}] \sum_{j=0}^{k+1}\binom{k+1}{j} a^j x^j \sum_{i=0}^{\infty} \binom{2k+1+i-1}{i}(-1)^i b^i x^i\\
&= m^k \sum_{j=0}^{k+1}\binom{k+1}{j} a^j \binom{2k+n-k-j}{n-k-j}(-b)^{n-k-j}\\
&= m^k \sum_{j=0}^{k+1} \binom{k+1}{j}\binom{n+k-j}{2k}a^j (-b)^{n-k-j}.\end{align*}
Thus the expansion of  $\frac{1+ax}{1+bx}c\left(\frac{mx(1+ax)}{1+bx}\right)$ has general term given by
$$a_n=\sum_{k=0}^n (\sum_{j=0}^{k+1} \binom{k+1}{j}\binom{n+k-j}{2k}a^j (-b)^{n-k-j})m^k C_k.$$
Note that in practice care has to be taken with the terms $T_{0,k}$ using the above formula so we have the following correction (for $a \ne 0$).
$$a_n=\sum_{k=0}^n (\sum_{j=0}^{k+1} \binom{k+1}{j}\binom{n+k-j}{2k}a^j (-b)^{n-k-j})m^k C_k+\frac{b}{a}0^k.$$
\end{example}
We denote the general solution of the
 recurrence $$a_n=s a_{n-1}+t \sum_{k=0}^{n-3} a_{k+1} a_{n-k-2},$$ with initial conditions
$a_0=1$, $a_1=p$ by $a_n(p,s,t)$.
\begin{example} When $(p,s,t)=(1,1,1)$, we get the sequence $a_n(1,1,1)$ that begins
$$1,1, 1, 2, 4, 9, 21, 51, 127, 323, 835, 2188, 5798, \ldots.$$
These are the Motzkin numbers, with a $1$ prepended.
The Hankel transform of this sequence has generating function $\frac{1-x}{1-x+x^2}$, beginning
$$1, 0, -1, -1, 0, 1, 1, 0, -1,\ldots.$$
The Hankel transform of the shifted sequence (the Motzkin numbers) is known to be the given by the all $1$'s sequence $1,1,1,1,\ldots$. The generating function of the Motzkin numbers is $\mathcal{J}(1,1,1,\ldots;1,1,1,\ldots)$.
\end{example}
\begin{example} When $(p,s,t)=(1,1,2)$, we get the sequence $a_n(1,1,2)$ that begins
$$1,1, 1, 3, 7, 21, 61, 191, 603, 1961, 6457, 21595,\ldots.$$ This is the sequence \seqnum{A025235} with a prepended $1$. The sequence \seqnum{A025235} counts Motzkin paths with the up step in two colors. The Hankel transform of $a_n(1,1,2)$ begins
$$1, 0, -4, -16, 128, 6144, 65536, -20971520, -3758096384,\ldots$$
while that of the shifted sequence $u_{n+1}(1,1,2)$ or \seqnum{A025235} is $2^{\binom{n+1}{2}}$. This follows from the fact that the generating function of this latter sequence is $\mathcal{J}(1,1,1,\ldots;2,2,2,\ldots)$.
\end{example}
\begin{example} When $(p,s,t)=(1,2,1)$, the sequence $a_n(1,2,1)$ that we get is the sequence of Catalan numbers $C_n$. This corresponds to a bijection between $2$-colored Motzkin paths (where the level step can have one of two colors) of length $n-1$ and Dyck paths.  It is a classical result that the Hankel transforms of this sequence and its shift $C_{n+1}$ are the all $1$'s sequence.
\end{example}
With regard to the shifted sequence $a_{n+1}(p,s,t)$, we have the following result.
\begin{proposition}
The sequence $\frac{1}{p} a_{n+1}(p,s,t)$ has generating function $\mathcal{J}(s,s,s,\ldots; pt,pt,pt,\ldots)$, and Hankel transform $(pt)^{\binom{n+1}{2}}$ (for $p \ne 0$).
\end{proposition}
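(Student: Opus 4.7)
The plan is to derive a functional equation for the rescaled shifted generating function, recognize it as the defining equation of a periodic (constant-coefficient) Jacobi continued fraction, and then apply the classical Heilermann formula for Hankel determinants.

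Write $u(x) = \sum_{n \ge 0} a_n(p,s,t)\, x^n$. The proof of the generalized Catalan--Schroeder proposition above has already established that $u$ satisfies
\[ u = 1 + px + sx(u-1) + tx(u-1)^2. \]
Since $p \ne 0$ we may set $b(x) = (u(x)-1)/(px)$, so that
\[ b(x) = \sum_{n \ge 0} \frac{1}{p}\, a_{n+1}(p,s,t)\, x^n. \]
Substituting $u - 1 = pxb$ into the functional equation and dividing through by $px$ collapses all excess factors of $p$ and $x$ into a single coefficient, yielding
\[ b = 1 + sxb + ptx^2 b^2, \qquad\text{equivalently}\qquad b = \cfrac{1}{1 - sx - ptx^2 b}. \]

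Now observe that the Jacobi continued fraction $J(x) := \mathcal{J}(s,s,\ldots; pt, pt, \ldots)$ with constant coefficients $\alpha_i = s$ and $\beta_i = pt$ is, by its self-similar periodic structure, precisely the unique formal power series satisfying $J(0)=1$ and
\[ J = \cfrac{1}{1 - sx - ptx^2 J}. \]
Since $b$ and $J$ obey the same fixed-point equation and both start with constant term $1$, uniqueness of the power-series solution gives $b(x) = \mathcal{J}(s,s,\ldots; pt, pt, \ldots)$, which is the first claim.

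For the Hankel transform, invoke the classical Heilermann determinant formula (recalled in the Appendix): if a sequence has generating function $\mathcal{J}(\alpha_0,\alpha_1,\ldots;\beta_1,\beta_2,\ldots)$, then its $(n{+}1) \times (n{+}1)$ Hankel determinant equals $\prod_{i=1}^{n} \beta_i^{\,n+1-i}$. Specializing to $\beta_i = pt$ for every $i$ reduces the product to $(pt)^{n+(n-1)+\cdots+1} = (pt)^{\binom{n+1}{2}}$, as claimed. The main point of care, and the only non-cosmetic step, is the algebraic collapse from $u - 1 = px + sx(u-1) + tx(u-1)^2$ to $b = 1 + sxb + ptx^2 b^2$: one has to verify that the substitution $u-1 = pxb$ together with the division by $px$ produces exactly the coefficient $pt$ on the quadratic term, which is what gives the uniform $\beta_i = pt$ in the continued fraction and hence the uniform exponent in the Hankel transform.
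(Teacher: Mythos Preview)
Your proof is correct and follows essentially the same approach as the paper: both arguments identify the shifted, rescaled generating function with the unique power-series fixed point of $g = 1/(1 - sx - ptx^2 g)$, which is $\mathcal{J}(s,s,\ldots; pt,pt,\ldots)$, and then read off the Hankel transform from the constant $\beta_i = pt$. The only cosmetic difference is that the paper solves the quadratic for $g$ explicitly and matches it against the closed form of $u(x)$, whereas you substitute $u-1 = pxb$ directly into the functional equation for $u$ to obtain the fixed-point equation for $b$ without ever writing down the square-root formula.
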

\begin{proof} The sequence with generating function $\mathcal{J}(s,s,s,\ldots; pt,pt,pt,\ldots)$ has a generating function $g(x)$ that satisfies
$$g(x)=\frac{1}{1-s x - pt x^2 g(x)}.$$
Solving this equation gives us
$$g(x)=\frac{1-sx-\sqrt{1-2sx+(s^2-4pt)x^2}}{2ptx^2}.$$ We see that this is the generating function of $\frac{1}{p} a_{n+1}(p,s,t)$.
\end{proof}
\begin{corollary} The sequence $\frac{1}{p} a_{n+1}(p,s,t)$ is the moment sequence for the family of orthogonal polynomials whose coefficient array is given by the Riordan array
$$\left(\frac{1}{1+sx+pt x^2}, \frac{x}{1+sx + pt x^2}\right).$$
\end{corollary}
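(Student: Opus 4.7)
The plan is to derive this corollary directly from the preceding proposition by invoking the classical correspondence between Jacobi continued fractions, moment sequences, and coefficient arrays of orthogonal polynomials (as laid out in the Appendix and in the references \cite{classical, Barry_moments}).

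First, I would recall from the previous proposition that the sequence $\frac{1}{p} a_{n+1}(p,s,t)$ has generating function $\mathcal{J}(s,s,s,\ldots;pt,pt,pt,\ldots)$. The classical theorem (Favard / Stieltjes) states that a sequence $\mu_n$ has generating function $\mathcal{J}(\alpha_0,\alpha_1,\ldots;\beta_1,\beta_2,\ldots)$ precisely when it is the moment sequence of the monic orthogonal polynomial family $P_n(x)$ defined by the three-term recurrence
$$P_{n+1}(x) = (x-\alpha_n)P_n(x) - \beta_n P_{n-1}(x), \qquad P_0(x)=1,\quad P_{-1}(x)=0.$$
Applied to the constant case $\alpha_n = s$ and $\beta_n = pt$ for all $n$, this gives the recurrence
$$P_{n+1}(x) = (x-s) P_n(x) - pt\cdot P_{n-1}(x).$$

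Second, I would show that the coefficient array $(C_{n,k})$ defined by $P_n(x) = \sum_k C_{n,k} x^k$ coincides with the Riordan array $L = \bigl(\frac{1}{1+sx+ptx^2}, \frac{x}{1+sx+ptx^2}\bigr)$. The direct way is to compute $T_{n,k} = [x^n] \frac{x^k}{(1+sx+ptx^2)^{k+1}}$ and verify that the rows, read as polynomials in $x$ via $R_n(x) = \sum_k T_{n,k} x^k$, satisfy the same three-term recurrence. A cleaner alternative is to use the production matrix formalism: a Riordan array $(g,f)$ whose production matrix is the tridiagonal matrix with subdiagonal $(1,1,1,\ldots)$, diagonal $(s,s,s,\ldots)$ and superdiagonal $(pt,pt,pt,\ldots)$ is exactly the one with $f$ satisfying $f = \frac{x}{1+sx+ptx^2}\cdot\text{(something)}$; more directly, the inverse Riordan array $L^{-1}$ has a bivariate generating function $\sum_{n,k} C_{n,k} x^k t^n$ whose defining functional equation is precisely the generating-function form of the three-term recurrence with constant coefficients $s$ and $pt$. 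Either route shows $C_{n,k} = T_{n,k}$.

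Combining these two steps yields the corollary: the moment sequence of the family with coefficient array $L$ is exactly the sequence whose generating function is $\mathcal{J}(s,s,\ldots;pt,pt,\ldots)$, which by the proposition is $\frac{1}{p}a_{n+1}(p,s,t)$. The main (minor) obstacle is bookkeeping the identification of the coefficient array with the given Riordan array; once the constancy of the Jacobi coefficients is invoked, the factorization $1+sx+ptx^2$ in both the $g$-part and the $f$-part of the Riordan pair is essentially forced by the recurrence, and no deeper calculation is required.
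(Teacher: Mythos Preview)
Your proposal is correct and follows essentially the same route the paper intends: the corollary is stated without proof precisely because it is an immediate consequence of the preceding proposition (which gives the generating function $\mathcal{J}(s,s,\ldots;pt,pt,\ldots)$) together with the Appendix result that the Riordan array $\left(\frac{1}{1+rx+sx^2},\frac{x}{1+rx+sx^2}\right)$ is the coefficient array of the monic orthogonal family with constant three-term coefficients $r$ and $s$. Your second step is more explicit than the paper's one-line appeal to that proposition, but the argument is the same.
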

\begin{corollary} We have the moment representation
$$\frac{1}{p} a_{n+1}(p,s,t)=\frac{1}{2 \pi} \int_{s-2 \sqrt{pt}}^{s+2 \sqrt{pt}} x^n \frac{\sqrt{-x^2+2sx+4pt-s^2}}{pt}\,dt.$$
\end{corollary}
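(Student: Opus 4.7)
The plan is to identify the candidate measure $d\mu(x) = w(x)\,dx$, with weight $w(x) = \frac{\sqrt{4pt-(x-s)^2}}{2\pi pt}$ supported on $[s-2\sqrt{pt},\,s+2\sqrt{pt}]$, as the orthogonality measure for the family of polynomials exhibited in the preceding corollary. That corollary presents them as orthogonal polynomials with constant three-term recurrence coefficients $\alpha_n = s$ and $\beta_n = pt$, equivalently a shifted and rescaled Chebyshev $U$-family, so on general principles their measure must be a semicircle law on $[s-2\sqrt{pt},s+2\sqrt{pt}]$; the task is to check the normalization and confirm that the moments are those prescribed by the Jacobi continued fraction $\mathcal{J}(s,s,\ldots;pt,pt,\ldots)$ from the preceding proposition.

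The natural way to do this is to compute the Stieltjes transform $G(z) = \int \frac{w(x)}{z-x}\,dx$ of the candidate measure and match it to the moment generating function. Via the substitution $x = s + 2\sqrt{pt}\cos\theta$, with $dx = -2\sqrt{pt}\sin\theta\,d\theta$ and $\sqrt{4pt-(x-s)^2} = 2\sqrt{pt}\sin\theta$ for $\theta\in[0,\pi]$, the transform reduces to the classical integral
$$G(z) = \frac{2}{\pi}\int_0^\pi \frac{\sin^2\theta}{z-s-2\sqrt{pt}\cos\theta}\,d\theta,$$
which by a routine unit-circle contour calculation (or by using the expansion $\sin^2\theta = \tfrac{1}{2}(1-\cos 2\theta)$ together with the Poisson kernel) evaluates to
$$G(z) = \frac{(z-s) - \sqrt{(z-s)^2 - 4pt}}{2pt},$$
the branch of the square root being fixed by the normalization condition $G(z) \sim 1/z$ as $z\to\infty$.

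Translating back via $\sum_{n\ge 0}M_n x^n = \tfrac{1}{x}G(1/x)$ then yields $\frac{1-sx - \sqrt{1-2sx+(s^2-4pt)x^2}}{2ptx^2}$, which is precisely the generating function for $\frac{1}{p}a_{n+1}(p,s,t)$ derived in the proof of the preceding proposition. Since the recurrence coefficients $\alpha_n, \beta_n$ are bounded and the candidate support is compact, the associated Hamburger moment problem is determinate, so equality of generating functions forces equality of moments. The only genuine piece of work is the trigonometric/contour integral above; this can in fact be bypassed by observing that $G$, as the locally uniform limit of the truncated $J$-fraction convergents, must satisfy the quadratic $pt\,G(z)^2 - (z-s)G(z) + 1 = 0$, from which the closed form for $G$ (and hence the moments) follows immediately. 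The main obstacle is therefore merely a careful bookkeeping of the normalizing factor $\frac{1}{2\pi pt}$ and the correct branch of the square root.
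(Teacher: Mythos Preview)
Your argument is correct and is essentially the same route as the paper's: both rely on the Stieltjes transform correspondence between the moment generating function and the weight. The paper's proof is the one-line invocation ``apply the Stieltjes--Perron transform to the generating function of $a_{n+1}$,'' i.e.\ it inverts the known generating function $g(x)=\frac{1-sx-\sqrt{1-2sx+(s^2-4pt)x^2}}{2ptx^2}$ to read off the density; you instead guess the semicircle weight and verify by computing its Stieltjes transform $G(z)$ forward, matching $\tfrac{1}{x}G(1/x)$ to $g(x)$. These are the two dual directions of the same identity, so there is no genuine methodological difference---your version simply spells out the computation that the paper leaves implicit.
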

\begin{proof} We apply the Stieltjes-Perron transform \cite{Henrici} to the generating function of $a_{n+1}$.
\end{proof}
\begin{corollary} If $s = 0$ and $t \ne 0$,  we have
$$a_n(p,0,t)=\frac{1}{2 \pi} \int_{-2\sqrt{pt}}^{2\sqrt{pt}} x^n \frac{\sqrt{4pt-x^2}}{tx}\,dx+0^n.$$
\end{corollary}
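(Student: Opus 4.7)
The plan is to derive this formula directly from the preceding corollary by setting $s=0$ and performing a single index shift. First I would substitute $s=0$ into the moment representation of $\frac{1}{p}a_{n+1}(p,s,t)$, which gives, for every $n\ge 0$,
$$\frac{1}{p}a_{n+1}(p,0,t)=\frac{1}{2\pi}\int_{-2\sqrt{pt}}^{2\sqrt{pt}} x^{n}\,\frac{\sqrt{4pt-x^{2}}}{pt}\,dx.$$
Next I would re-index by replacing $n+1$ with $n$, which is legal for $n\ge 1$, and multiply both sides by $p$ to absorb the factor $1/p$ on the right. Rewriting $x^{n-1}$ as $x^{n}/x$ then collapses the denominator $pt$ to $tx$ and produces exactly
$$a_{n}(p,0,t)=\frac{1}{2\pi}\int_{-2\sqrt{pt}}^{2\sqrt{pt}} x^{n}\,\frac{\sqrt{4pt-x^{2}}}{tx}\,dx, \qquad n\ge 1,$$
which is the target identity with no correction term needed.

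The only remaining step is to reconcile the $n=0$ case. Here the initial condition gives $a_{0}(p,0,t)=1$, whereas the integrand $\sqrt{4pt-x^{2}}/(tx)$ is odd in $x$ (the numerator is even and we have divided by $x$), so its integral over the symmetric interval $[-2\sqrt{pt},2\sqrt{pt}]$, interpreted as a Cauchy principal value at the simple pole $x=0$, is zero. Adding $0^{n}$, under the standard convention $0^{0}=1$ and $0^{n}=0$ for $n\ge 1$, therefore supplies exactly the missing $1$ at $n=0$ and leaves the identity untouched for $n\ge 1$, yielding the claimed formula.

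There is no serious technical obstacle: the argument reduces to one substitution plus a boundary adjustment. The point worth flagging is conceptual rather than computational — dividing the even weight $\sqrt{4pt-x^{2}}/(pt)$ by $x$ destroys evenness and introduces a principal-value singularity, and the $+0^{n}$ in the statement is precisely the minimal correction needed so that the odd-function integral, which unavoidably loses the $n=0$ moment, is still reconciled with $a_{0}=1$.
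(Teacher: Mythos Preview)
Your proposal is correct and follows exactly the route the paper intends: the statement is presented as a corollary (with no separate proof) of the preceding moment representation for $\frac{1}{p}a_{n+1}(p,s,t)$, and your specialization $s=0$, multiplication by $p$, index shift $n\to n-1$, and handling of the $n=0$ boundary via the odd principal-value integral plus $0^n$ is precisely the intended deduction. There is nothing to add.
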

\begin{corollary} If $s\ne 0$ and $t \ne 0$, we have
$$a_n(p,s,t)=\frac{1}{2 \pi} \int_{s-2\sqrt{pt}}^{s+2\sqrt{pt}} x^n \frac{\sqrt{-x^2+2sx+4pt-s^2}}{tx}\,dx+0^n\left(1-\frac{s}{2t}-\sgn(4pt-s^2)\frac{\sqrt{s^2-4pt}}{2t}\right).$$
\end{corollary}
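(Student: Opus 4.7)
The plan is to bootstrap from the previous corollary (the moment representation of $\frac{1}{p}a_{n+1}(p,s,t)$) to handle $n\ge 1$, and to pin down the $n=0$ correction by a direct evaluation of the resulting integral. Concretely, multiplying the previous corollary through by $p$ gives
$$a_{n+1}(p,s,t)=\frac{1}{2\pi t}\int_{s-2\sqrt{pt}}^{s+2\sqrt{pt}} x^n \sqrt{-x^2+2sx+4pt-s^2}\,dx,$$
and reindexing $n\mapsto n-1$ together with the trivial rewriting $x^{n-1}=x^n/x$ (valid almost everywhere on the integration interval, which is all that matters for a Lebesgue integral) produces
$$a_n(p,s,t)=\frac{1}{2\pi}\int_{s-2\sqrt{pt}}^{s+2\sqrt{pt}} x^n\frac{\sqrt{-x^2+2sx+4pt-s^2}}{tx}\,dx$$
for every $n\ge 1$. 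Since $0^n=0$ for $n\ge 1$, the $0^n$-correction contributes nothing in that range, so the claim reduces to the single identity at $n=0$.

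For $n=0$ one has $a_0(p,s,t)=1$, so it suffices to compute the integral
$$J=\frac{1}{2\pi}\int_{s-2\sqrt{pt}}^{s+2\sqrt{pt}}\frac{\sqrt{-x^2+2sx+4pt-s^2}}{tx}\,dx$$
and match $1-J$ against the stated correction. Translating by $x=s+y$, with $a=2\sqrt{pt}$, I get $J=\frac{1}{2\pi t}\int_{-a}^{a}\frac{\sqrt{a^2-y^2}}{s+y}\,dy$, which after a reflection $y\mapsto -y$ is the Cauchy (Stieltjes) transform of the Wigner semicircle density at the real point $z=s$. I would then invoke the standard evaluation
$$\int_{-a}^{a}\frac{\sqrt{a^2-y^2}}{z-y}\,dy=\pi\bigl(z-\sgn(z)\sqrt{z^2-a^2}\bigr)\qquad (|z|>a),$$
together with its principal-value counterpart $\pi z$ for $z\in(-a,a)$. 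Substituting $z=s$ and rearranging yields $J$ in closed form, and the identity $1-J=1-\frac{s}{2t}-\sgn(4pt-s^2)\frac{\sqrt{s^2-4pt}}{2t}$ is then a matter of algebra, interpreting $\sgn(4pt-s^2)\sqrt{s^2-4pt}$ by the branch convention appropriate to the two regimes.

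The main obstacle is the case split $s^2>4pt$ versus $s^2<4pt$. In the first regime $0\notin[s-a,s+a]$ and the integrand of $J$ is absolutely integrable; the closed form for the Cauchy transform away from the cut applies directly. In the second regime $0$ is interior to the interval, the factor $1/x$ is non-integrable, and the definition of $J$ must be promoted to a Cauchy principal value before the formula $\pi z$ can be used. Keeping the sign data compatible so that the unified expression $\sgn(4pt-s^2)\sqrt{s^2-4pt}$ comes out correctly in both regimes is the delicate step; the rest is bookkeeping. Conceptually, $1-J$ is exactly the mass of an atomic part at the origin in the representing measure for $a_n(p,s,t)$: dividing the semicircle-type density for $a_{n+1}(p,s,t)$ by $x$ pushes missing mass onto the origin, and the $n=0$ evaluation is precisely what quantifies that missing mass.
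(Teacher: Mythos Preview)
The paper states this result as a corollary with no accompanying proof; it is left as an immediate consequence of the preceding moment representation for $\frac{1}{p}a_{n+1}(p,s,t)$ obtained via the Stieltjes--Perron transform. Your derivation---multiply through by $p$, shift $n\mapsto n-1$, rewrite $x^{n-1}=x^n/x$ a.e.\ to get the $n\ge 1$ cases, then compute the $n=0$ integral via the Cauchy transform of the semicircle density to identify the atomic correction---is exactly the natural way to fill in what the paper leaves implicit, and the reasoning is sound.

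One comment: you are right to flag the branch/sign bookkeeping as the delicate point. The paper's expression $\sgn(4pt-s^2)\sqrt{s^2-4pt}$ is itself somewhat informal (for $4pt>s^2$ the square root is imaginary unless a convention is adopted), so do not be surprised if matching your closed form for $1-J$ to the stated correction requires choosing the branch so that the term vanishes in the regime $s^2<4pt$ and reduces to the real $\sqrt{s^2-4pt}$ (with the appropriate sign coming from $\sgn(s)$) when $s^2>4pt$. Your identification of $1-J$ as the mass of a Dirac atom at the origin is the right conceptual picture and explains why the correction is nonzero precisely when $0$ lies outside the support of the absolutely continuous part.
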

We can formulate the following conjecture concerning the Hankel transform of the sequence $a_n(p,s,t)$.
\begin{conjecture} The Hankel transform $h_n(p,s,t)$ of the sequence $a_n(p,s,t)$ is given by
\begin{align*}h_n(p,s,t)&=t^{\binom{n}{2}} p^{\binom{n+1}{2}} [x^n] \frac{1-px}{1-sx+ptx^2}\\
&=t^{\binom{n}{2}} p^{\binom{n+1}{2}}\left(\sum_{i=0}^n \binom{i}{n-i}(-pt)^{n-i}s^{2i-n}-p \sum_{i=0}^{n-1}\binom{i}{n-i-1}(-pt)^{n-i-1}s^{2i-n+1}\right).\end{align*}
\end{conjecture}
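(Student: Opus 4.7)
The plan is to determine an explicit Stieltjes (S-)continued fraction expansion of the generating function $g(x) = \sum_{n \ge 0} a_n(p,s,t)\, x^n$, convert it to a Jacobi continued fraction to read off the $\beta_k$, and then apply the classical formula $h_n = \prod_{k=1}^{n} \beta_k^{n+1-k}$. The starting point is the identity $g(x) = 1 + p x\, \tilde{g}(x)$, where $\tilde{g}(x) = \mathcal{J}(s,s,\ldots;pt,pt,\ldots)$ is the generating function from the earlier proposition (so $a_{n+1} = p\, \tilde{a}_n$), and in particular $\tilde{g}$ satisfies the quadratic identity $\tilde{g} - 1 = sx\,\tilde{g} + pt x^2 \tilde{g}^2$.

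Denote the $k$-th S-fraction tail by $g^{(k)}$, so that $g^{(0)} = g$, $c_{k+1} = [x^1]g^{(k)}$, and $g^{(k+1)} = (g^{(k)}-1)/(c_{k+1}\, x\, g^{(k)})$. I would prove by induction on $k \ge 0$ the following alternating form:
\[ g^{(2k)}(x) = 1 + \nu_k\, x\, \tilde{g}(x), \qquad g^{(2k+1)}(x) = \tilde{g}(x)/g^{(2k)}(x), \]
with $\nu_0 = p$ and $\nu_{k+1} = pt/(s-\nu_k)$. The base case $g^{(0)} = 1 + px\tilde{g}$ is exactly the identity above. The step from $g^{(2k)}$ to $g^{(2k+1)}$ is immediate from the definitions. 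The step from $g^{(2k+1)}$ back to $g^{(2k+2)}$ is the heart of the calculation: using the quadratic identity one rewrites $\tilde{g} - 1 - \nu_k x\tilde{g} = x\tilde{g}\bigl((s-\nu_k)+pt x\tilde{g}\bigr)$, reads off $c_{2k+2} = s - \nu_k$, and after simplifying the ratio $(g^{(2k+1)}-1)/(c_{2k+2}\, x\, g^{(2k+1)})$ obtains $g^{(2k+2)} = 1 + (pt/(s-\nu_k))\, x\, \tilde{g}$, i.e.\ $\nu_{k+1} = pt/(s-\nu_k)$. This recursion is then solved using the three-term recurrence $\phi_{k+1} = s\phi_k - pt\phi_{k-1}$, which is precisely what $\phi_n = [x^n]\frac{1-px}{1-sx+ptx^2}$ satisfies (with $\phi_0 = 1$ and $\phi_1 = s-p$): a short induction gives $\nu_k = pt\phi_{k-1}/\phi_k$ for $k \ge 1$, since then $s-\nu_k = \phi_{k+1}/\phi_k$ by the $\phi$-recurrence.

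This yields the S-fraction coefficients $c_{2k} = \phi_k/\phi_{k-1}$ and $c_{2k+1} = pt\phi_{k-1}/\phi_k$. The standard S-to-J conversion $\beta_k = c_{2k-1}c_{2k}$ gives $\beta_1 = p\phi_1$ and $\beta_k = pt\phi_k\phi_{k-2}/\phi_{k-1}^2$ for $k \ge 2$. Substituting into $h_n = \beta_1^n \prod_{k=2}^{n} \beta_k^{n+1-k}$, the factor $(pt)^{\binom{n}{2}}$ coming from the $k \ge 2$ terms combines with $p^n$ from $\beta_1^n$ to produce $p^{\binom{n+1}{2}} t^{\binom{n}{2}}$, while in the remaining $\phi$-product the exponent of each $\phi_i$ for $1 \le i \le n-1$ telescopes to $0$, leaving only $\phi_n^1$. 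The outcome is $h_n = t^{\binom{n}{2}} p^{\binom{n+1}{2}} \phi_n$, which is the first displayed form of the conjecture; the second displayed form is then the elementary coefficient extraction for $\frac{1-px}{1-sx+ptx^2}$ via geometric series applied to the denominator. The main obstacle is the inductive verification of the alternating form of the tails, requiring the quadratic identity for $\tilde{g}$ to be invoked at exactly the right step so that the ratio defining $g^{(2k+2)}$ collapses cleanly; once that template is in place, the exponent bookkeeping producing the final telescoping is routine.
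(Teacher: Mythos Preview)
The paper does not prove this statement at all: it is explicitly labelled a \emph{Conjecture}, and the author remarks in the Conclusions that ``proofs of results concerning Hankel transforms and Somos sequences can be elusive \ldots\ hence we couch some proposed results as conjectures.'' The only comment following the statement is the observation that $\frac{1-px}{1-sx+ptx^2}$ can be written as $\left(\frac{1-x}{1+ptx^2},\frac{x}{1+ptx^2}\right)\cdot\frac{1}{1-sx}$, which identifies the conjectured $\phi_n$ as values of a family of orthogonal polynomials but does not advance a proof.

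Your argument, by contrast, is a genuine proof. The key identity $g=1+px\tilde g$ follows directly from the paper's Proposition on $\frac{1}{p}a_{n+1}$, the inductive description of the S-fraction tails is correct (the quadratic relation $\tilde g-1=sx\tilde g+ptx^2\tilde g^2$ is exactly what is needed to collapse $g^{(2k+2)}$), the solution $\nu_k=pt\phi_{k-1}/\phi_k$ via the linear recurrence $\phi_{k+1}=s\phi_k-pt\phi_{k-1}$ with $\phi_0=1,\ \phi_1=s-p$ checks out, and the exponent bookkeeping in $\prod\beta_k^{n+1-k}$ telescopes to $p^{\binom{n+1}{2}}t^{\binom{n}{2}}\phi_n$ exactly as you describe. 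The only point worth making explicit is a genericity caveat: the S-fraction construction requires $\phi_k\neq 0$ for all $k\le n$. Since both sides of the asserted identity are polynomials in $p,s,t$, agreement on the Zariski-open set where no $\phi_k$ vanishes extends to all parameter values; you should add one sentence to that effect. With that addition your argument upgrades the paper's conjecture to a theorem.
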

We note that we have
$$\left(\frac{1-x}{1+ptx^2}, \frac{x}{1+ptx^2}\right)\cdot \frac{1}{1-sx}=\frac{1-px}{1-sx+ptx^2}.$$
Thus the Hankel transform $h_n(p,s,t)$ is given by the evaluation of a sequence of scaled orthogonal polynomials, since the Riordan array $\left(\frac{1-x}{1+ptx^2}, \frac{x}{1+ptx^2}\right)$ is the coefficient matrix of a family of orthogonal polynomials \cite{Barry_moments}

We have the following conjecture concerning the Hankel transforms $h_n(p,s,t)$ and Somos $4$ sequences. We recall that a $(\alpha, \beta)$ Somos $4$ sequence is a sequence $e_n$ that satisfies
$$ e_n=\frac{ \alpha e_{n-1} e_{n-3} + \beta e_{n-2}^2}{e_{n-4}}$$ for given $e_0, e_1, e_2, e_3$. Somos $4$ sequences are closely related to elliptic curves \cite{Hone, Yura}.
\begin{conjecture} The Hankel transform $h_n(p,s,t)$ of the generalized Schroeder numbers $a_n(p,s,t)$ is a $((pst)^2, (pt)^3(pt-s^2))$ Somos $4$ sequence.
\end{conjecture}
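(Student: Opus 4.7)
The plan is to reduce the Somos $4$ identity to a polynomial identity in the auxiliary sequence that appears in the preceding closed-form conjecture, and then check that identity directly using the linear recurrence it satisfies. So, granting the previous conjecture, write
\[
h_n(p,s,t)=t^{\binom{n}{2}} p^{\binom{n+1}{2}} u_n, \qquad u_n := [x^n]\frac{1-px}{1-sx+ptx^2},
\]
so that $u_n$ satisfies the two-term linear recurrence $u_n = s\, u_{n-1} - pt\, u_{n-2}$ with $u_0 = 1$, $u_1 = s-p$.

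Next, I would match the powers of $p$ and $t$ in each of the three monomial groupings $h_n h_{n-4}$, $(pst)^2 h_{n-1}h_{n-3}$, and $(pt)^3(pt-s^2)h_{n-2}^2$. A direct computation with the binomial coefficients gives
\[
\binom{n}{2}+\binom{n-4}{2} = 4 + 2\binom{n-2}{2}, \qquad \binom{n+1}{2}+\binom{n-3}{2} = 4 + 2\binom{n-1}{2},
\]
and similarly the $h_{n-1}h_{n-3}$ term carries three fewer factors of $t$ and three fewer factors of $p$ than $h_n h_{n-4}$. Dividing the proposed Somos $4$ recurrence by the common factor $t^{\binom{n}{2}+\binom{n-4}{2}}p^{\binom{n+1}{2}+\binom{n-3}{2}}$ therefore reduces the claim to the cleaner identity
\[
pt\bigl(u_n u_{n-4} - u_{n-2}^2\bigr) = s^2\bigl(u_{n-1}u_{n-3} - u_{n-2}^2\bigr).
\]

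To establish this, I would eliminate $u_n$ using $u_n = s u_{n-1} - pt u_{n-2}$ and $u_{n-4}$ using $u_{n-4} = (s u_{n-3} - u_{n-2})/(pt)$. Expanding $pt\,u_n u_{n-4}$ in this way gives
\[
pt\,u_n u_{n-4} = s^2 u_{n-1}u_{n-3} - spt\,u_{n-2}u_{n-3} - s\,u_{n-1}u_{n-2} + pt\,u_{n-2}^2,
\]
so
\[
pt\bigl(u_n u_{n-4} - u_{n-2}^2\bigr) = s^2 u_{n-1}u_{n-3} - s u_{n-2}\bigl(pt u_{n-3} + u_{n-1}\bigr).
\]
The linear recurrence gives $u_{n-1} + pt u_{n-3} = s u_{n-2}$, so the bracketed factor collapses to $s u_{n-2}$, yielding $s^2(u_{n-1}u_{n-3}-u_{n-2}^2)$, as required. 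Restoring the factors of $t$ and $p$ then delivers the Somos $4$ recurrence with the stated parameters $\alpha = (pst)^2$ and $\beta = (pt)^3(pt-s^2)$.

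The genuine obstacle is not this algebraic verification, which is essentially mechanical, but rather the preceding closed-form conjecture that $h_n(p,s,t) = t^{\binom{n}{2}} p^{\binom{n+1}{2}} u_n$. Given that closed form, the Somos $4$ property drops out as a two-line consequence of the linear recurrence for $u_n$. Without it, one would instead need to compute the Hankel determinants directly, most naturally via the orthogonal polynomial family whose coefficient matrix is the Riordan array $\left(\tfrac{1-x}{1+ptx^2},\tfrac{x}{1+ptx^2}\right)$ noted just before the conjecture, and show that the resulting determinant formula satisfies the same four-term bilinear relation.
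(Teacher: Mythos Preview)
The paper offers no proof of this statement: it is presented explicitly as a conjecture, and in the Conclusions the author notes that proofs relating Hankel transforms to Somos sequences ``can be elusive'' and are therefore left as conjectures. So there is no paper proof to compare against.

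Your argument is correct as a \emph{conditional} proof: granting the preceding closed-form conjecture $h_n = t^{\binom{n}{2}} p^{\binom{n+1}{2}} u_n$, your power-counting is right (indeed $\binom{n}{2}+\binom{n-4}{2}-2\binom{n-2}{2}=4$ and $\binom{n-1}{2}+\binom{n-3}{2}-2\binom{n-2}{2}=1$, matching the extra $t^4$ and $t$ factors, and similarly for $p$), and the reduction to
\[
pt\bigl(u_n u_{n-4}-u_{n-2}^2\bigr)=s^2\bigl(u_{n-1}u_{n-3}-u_{n-2}^2\bigr)
\]
followed by the substitution $u_{n-1}+pt\,u_{n-3}=s\,u_{n-2}$ is clean and valid. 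This is more than the paper does: you have shown that the Somos~$4$ conjecture is a formal consequence of the determinant closed-form conjecture, which the paper does not observe. You are also right to flag that the real content lies in the closed form itself; without it, one would need an independent evaluation of the Hankel determinants (e.g.\ via the orthogonal-polynomial/Heilermann route), and that step remains open in the paper.
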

\begin{example} We consider the sequence $a_n(1,-1,-1)$ which begins
$$1, 1, -1, 0, 2, -3, -1, 11, -15, -13, 77,\ldots.$$ Its Hankel transform is the signed Fibonacci sequence
$$1, -2, -3, 5, 8, -13,\ldots$$ with general term $(-1)^{\binom{n}{2}}F_{n+1}$. This is a (trivial) $(1,2)$ Somos $4$ sequence.
\end{example}
\begin{example} The sequence $a_n(2,1,1)$ begins
$$1, 2, 2, 6, 14, 42, 122, 382, 1206, 3922, 12914,\ldots.$$ This is \seqnum{A014431}. Its Hankel transform begins $$1, -2, -24, -64, 5120, 229376,\ldots.$$
This is a $(4,8)$ Somos $4$ sequence.
\end{example}
\section{A third order recurrence}
We begin this section by looking at the generalized Catalan recurrence
$$a_n= r a_{n-1}+s a_{n-2} + t \sum_{k=1}^{n-3} a_k a_{n-k-2},$$ with
$a_0=1$, $a_1=p$ and $a_2=q$.
This may equivalently be written as
$$a_n= r a_{n-1}+s a_{n-2} + t \sum_{i=0}^{n-4} a_{i+1} a_{n-i-3},$$ with
$a_0=1$, $a_1=p$ and $a_2=q$. Thus in the summation part, the sum is
$$a_1 a_{n-3}+\cdots + a_{n-3}a_1.$$
We shall denote the solution of this recurrence by $a_n(p,q,r,s,t)$. This sequence begins
$$1, p, q, ps + qr, p^2t + prs + q(r^2 + s), p^2rt + p(2qt + r^2s + s^2) + qr(r^2 + 2s),\ldots.$$ We let $u(x)=u(x;p,q,r,s,t)$ be the generating function of this solution sequence.
We have the following proposition.
\begin{proposition} The generating function $u(x)$ is given by
$$\left(\frac{1-(r-p)x-(-q+pr+s-t)x^2}{1-rx-(s-2t)x^2}, \frac{tx^2(1-(r-p)x-(-q+pr+s-t)x^2)}{(1-rx-(s-2t)x^2)^2}\right)\cdot c(x).$$
\end{proposition}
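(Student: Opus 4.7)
The plan is to mimic the proof of Proposition~4, translating the convolution recurrence into a quadratic functional equation for $u(x)$ and then reading off the Riordan array form via the identity $\frac{-c}{b}\,c(ac/b^2)$ from the Preliminaries.

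First, I would rewrite the convolution in generating-function language. Writing the sum as $\sum_{j=0}^{n-4} a_{j+1}a_{n-3-j}$ (by the substitution $j=k-1$), one recognizes this, for each $n\ge 4$, as the coefficient of $x^{n-4}$ in $\left(\frac{u(x)-1}{x}\right)^{2}$. Multiplying by $t\,x^{n}$ and summing over $n\ge 4$ therefore yields the contribution $t\,x^{2}(u(x)-1)^{2}$. The terms $r\,a_{n-1}$ and $s\,a_{n-2}$ contribute $rx(u-1-px)$ and $sx^{2}(u-1)$ respectively, after subtracting the initial terms that fall outside the range $n\ge 3$ where the recurrence is valid.

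Next, combining these pieces with the explicit initial values $a_0=1,\,a_1=p,\,a_2=q$, I would obtain the functional equation
$$u = 1 + (p-r)x + (q-pr-s+t)x^{2} + rxu + (s-2t)x^{2}u + t x^{2}u^{2}.$$
This is a quadratic in $u$ of the form $Au^{2}+Bu+C=0$ with
$$A = t x^{2},\quad B = -(1-rx-(s-2t)x^{2}),\quad C = 1-(r-p)x-(-q+pr+s-t)x^{2}.$$
Applying the identity $u=\frac{-C}{B}c\!\left(\frac{AC}{B^{2}}\right)$ from the Preliminaries, which selects the correct branch (the one analytic at $x=0$ with $u(0)=1$), produces
$$u(x) = \frac{1-(r-p)x-(-q+pr+s-t)x^{2}}{1-rx-(s-2t)x^{2}}\;c\!\left(\frac{t x^{2}\bigl(1-(r-p)x-(-q+pr+s-t)x^{2}\bigr)}{\bigl(1-rx-(s-2t)x^{2}\bigr)^{2}}\right),$$
which by the Fundamental Theorem of Riordan arrays is exactly the asserted expression $(g,f)\cdot c(x)$.

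The only real obstacle is the careful bookkeeping of initial terms: the recurrence only holds for $n\ge 3$, so the sums $\sum_{n\ge 3} r a_{n-1}x^{n}$ and $\sum_{n\ge 3} s a_{n-2}x^{n}$ must be compensated for the low-order terms, and the convolution sum vanishes for $n=3$ (the range $k=1$ to $n-3=0$ is empty). Verifying that the resulting constant, linear, and quadratic coefficients of $x$ on both sides of the functional equation match the prescribed values $a_0=1$, $a_1=p$, $a_2=q$ is a mechanical check that I would perform to confirm no off-by-one slip has occurred before invoking the Catalan-root identity.
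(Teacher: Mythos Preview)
Your proposal is correct and follows essentially the same route as the paper: translate the recurrence (valid for $n\ge 3$) together with the initial values into the quadratic functional equation $u=1+px+qx^{2}+rx(u-1-px)+sx^{2}(u-1)+tx^{2}(u-1)^{2}$, which is exactly your displayed equation after expansion, and then solve. The only cosmetic difference is that the paper first writes the explicit radical solution and then identifies it with the Riordan form, whereas you go directly to the Riordan form via the $-\frac{C}{B}\,c(AC/B^{2})$ identity from the Preliminaries; both are equivalent.
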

\begin{proof}
We translate the recurrence (for $n \ge 3$) and the initial conditions into the following equation for the generating function $u(x)=\sum_{n=0}^{\infty} a_n(p,q,r,s,t)x^n$.
$$u(x)=1+px+qx^2+rx(u-1-xp)+sx^2(u-1)+tx^2(u^2-2u+1).$$
Then the solution is given by
$$u(x)=\frac{1-rx-(s-2t)x^2-\sqrt{1-2rx+(r^2-2s)x^2+2(rs-2pt)x^3+(s^2-4qt+4prt)x^4}}{2tx^2}.$$
This is equal to
$$\left(\frac{1-(r-p)x-(-q+pr+s-t)x^2}{1-rx-(s-2t)x^2}, \frac{tx^2(1-(r-p)x-(-q+pr+s-t)x^2}{(1-rx-(s-2t)x^2)^2}\right)\cdot c(x).$$
\end{proof}
\begin{example} The sequence $a_n(1,2,2,1,1)$ begins
$$1, 1, 2, 5, 13, 35, 97, 275, 794, 2327, 6905,\ldots.$$ This is \seqnum{A086581}, which counts the number of Dyck paths of semi-length $n$ that avoid DDUU. In this case we have
$$a_n=\sum_{k=0}^{\lfloor \frac{n}{2} \rfloor} \binom{n+k}{3k}C_k.$$
The generating function is
$$\frac{1}{1-x}c\left(\frac{x^2}{(1-x)^3}\right).$$
\end{example}
\begin{example} The sequence $a_n(1,2,1,2,1)$ coincides with the Motzkin numbers $M_n$. The generating function is
$$\frac{1}{1-x}c\left(\frac{x^2}{(1-x)^2}\right),$$ so that
$$M_n = \sum_{k=0}^{\lfloor \frac{n}{2} \rfloor} \binom{n}{2k}C_k.$$
\end{example}
\begin{example} The sequence $a_n(1,2,2,2,1)$ begins
$$1, 1, 2, 6, 17, 50, 150, 458, 1420, 4460, 14165,\ldots.$$ This is \seqnum{A025272}$(n+1)$. It has as generating function the power series given by
$$\frac{1-x-x^2}{1-2x}c\left(\frac{x^2(1-x-x^2)}{(1-2x)^2}\right).$$ The Hankel transform of the shifted sequence $a_{n+1}$ in this case begins
$$1, 2, 3, -5, -28, -67, -411, -506, 10855, 74231, 664776,\ldots.$$ This is the $(1,-2)$ Somos $4$ sequence \seqnum{A178376}$(n+1)$, which is defined by the elliptic curve $ y^2 +y = x^3 +3x^2 +x$.
\end{example}
\begin{example} The sequence $-a_{n+1}(-1,2,-2,-1,-1)$ begins
$$1, -2, 3, -3, -1, 15, -47, 98, -133, 17, 579,\ldots,$$ It has its generating function given by
 $$\frac{1+x}{1+2x-x^2}c\left(\frac{-x^2(1+x)}{(1+2x-x^2)^2}\right).$$ Its Hankel transform begins
$$1, -1, 1, 2, -1, -3,\ldots$$ which is a $(1,1)$ Somos $4$ sequence. This is a variant of \seqnum{A006769}, which is the elliptic divisibility sequence associated with elliptic curve $E: y^2 + y = x^3 - x$ and multiples of the point $(0,0)$.  In this case, the sequence $a_n(-1,2,-2,-1,-1)$ begins
$$1, -1, 2, -3, 3, 1, -15, 47, -98, 133, -17, -579,\ldots$$ and its Hankel transform is the $(1,1)$-Somos sequence beginning
$$1, 1, -2, -1, 3, -5,\ldots,$$ which is $(-1)^{n+1}$\seqnum{A006769}$(n+3)$.
\end{example}

\begin{example}
The sequence $a_n(-1,-2,2,-1,-1)$ begins
$$1, -1, -2, -3, -5, -11, -27, -65, -154, -371, -917, -2303,\ldots.$$ Its generating function is given by
$$\frac{1-3x}{1-2x-x^2}c\left(\frac{x^2(1-3x)}{(1-2x-x^2)^2}\right).$$ Its Hankel transform begins
$$1, -3, 2, 11, -29, -21,\ldots$$ which is a $(1,1)$ Somos $4$ sequence. It is a variant of \seqnum{A178384}, which is associated to the elliptic curve $y^2 + y = x^3 + x$. The shifted sequence $-a_{n+1}$ in this case has a Hankel transform that begins $$1, -1, -3, -2, 11, 29,\ldots.$$
\end{example}
We have the following two conjectures regarding these sequences, their Hankel transforms, and Somos $4$ sequences. The first conjecture claims that in all cases, the sequence $\frac{1}{p} a_{n+1}(p,q,r,s,t)$ has a Somos $4$ Hankel transform.
\begin{conjecture} The Hankel transform of the sequence $\frac{1}{p} a_{n+1}(p,q,r,s,t)$ is a $((pt)^2, -t^2(p^2s+pqr-q^2))$ Somos $4$ sequence.
\end{conjecture}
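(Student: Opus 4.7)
The strategy I would adopt is to realise the Hankel transform as a sequence of Hankel determinants of a moment sequence whose Jacobi continued fraction parameters encode coordinates of multiples of a rational point on an associated elliptic curve, and then invoke the Somos $4$ property of such data in the spirit of Hone \cite{Hone} and \cite{Yura}. From the Proposition, the generating function $u(x) = u(x;p,q,r,s,t)$ of $a_n(p,q,r,s,t)$ satisfies an explicit quadratic in $u$ with coefficients polynomial in $x$ and the parameters. Passing to $\tilde{u}(x) = (u(x)-1)/(px)$, which is the generating function of $\tilde{a}_n := \frac{1}{p}a_{n+1}(p,q,r,s,t)$ and normalised so that $\tilde{a}_0=1$, we obtain a new quadratic equation for $\tilde{u}$. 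The starting point would be to seek a Jacobi continued fraction expansion $\tilde{u}(x) = \mathcal{J}(c_0,c_1,\ldots;b_1,b_2,\ldots)$; unlike the second order case of Proposition 4 where $c_n$ and $b_n$ are constant, here they will be non-constant rational functions of $n$, to be extracted recursively from the algebraic equation via a Viskovatov-type algorithm.

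The central step is to identify the sequences $(c_n)$ and $(b_n)$ with explicit rational combinations of the coordinates $(x_n,y_n)$ of the multiples $nP$ of a distinguished point $P$ on a Weierstrass elliptic curve $E(p,q,r,s,t)$. The motivating example, together with the family $E_t:y^2+4xy+y=x^3+(t-1)x+tx$ introduced later, indicates the expected shape of this correspondence, with $c_n$ built from ratios of the form $y_n/x_n$ and $b_n$ from products like $x_n x_{n+2}/x_{n+1}^2$. Granted such an identification, the Hankel transform $h_n(p,q,r,s,t) = \prod_{k=1}^{n} b_k^{n+1-k}$ inherits a Somos $4$ recurrence from the addition law on $E(p,q,r,s,t)$. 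The Somos parameters should then emerge as natural invariants of the Weierstrass model: the factor $(pt)^2$ as a square arising from the $b_k$-normalisation, and $-t^2(p^2s+pqr-q^2)$ as a discriminant-like quantity, which can be sanity checked against the examples $a_n(1,2,2,2,1)$, $a_n(-1,2,-2,-1,-1)$ and $a_n(-1,-2,2,-1,-1)$ of the previous subsection.

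The main obstacle is the explicit construction of $E(p,q,r,s,t)$ and the point $P$ generating the continued fraction data. Concretely, I would compute $c_n$ and $b_n$ symbolically in $p,q,r,s,t$ for $n \le 4$ or $n \le 5$, then fit a Weierstrass equation $y^2+\alpha xy+\beta y=x^3+\gamma x^2+\delta x+\varepsilon$ with coefficients polynomial in $(p,q,r,s,t)$ whose initial multiples $nP$ reproduce the computed $c_n, b_n$, and finally use the elliptic addition formulas to propagate the identification to all $n$. A natural staged attack is to first treat the specialisation $t=1$, $q=p$, $r=0$, which collapses the five-parameter recurrence into the range covered by the family $E_t$ of Section 8, and then to handle the additional parameters $r, s, t$ by an appropriate change of variables on the curve. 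The analogous but simpler Section 9 conjectures for the two-parameter $E:y^2+axy+y=x^3+bx^2+cx$ family, once established, should provide the template by which the present five-parameter statement is reduced to the elliptic curve side of the correspondence.
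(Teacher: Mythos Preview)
The paper does not prove this statement: it is explicitly labeled a \emph{Conjecture}, and the Conclusions section says that ``proofs of results concerning Hankel transforms and Somos sequences can be elusive \ldots\ hence we couch some proposed results as conjectures.'' So there is no proof in the paper to compare your proposal against.

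As for your proposal itself, it is a plausible research strategy in the spirit of the paper, but it is not a proof and you acknowledge as much. The central gap is exactly the one you flag: you do not construct the elliptic curve $E(p,q,r,s,t)$ and the point $P$ whose multiples would generate the Jacobi continued fraction coefficients $(c_n,b_n)$ of $\tilde{u}(x)$. Computing $c_n,b_n$ symbolically for $n\le 5$ and then \emph{fitting} a Weierstrass model is a heuristic discovery step, not an argument; to turn it into a proof you would need (i) a closed-form candidate for $E$ and $P$ in terms of $(p,q,r,s,t)$, (ii) a verification that the addition-law recursion for $(x_n,y_n)$ on $E$ reproduces the recursion that the $(c_n,b_n)$ actually satisfy (which you must first derive from the quadratic satisfied by $\tilde{u}$), and (iii) a check that the resulting Somos~$4$ parameters really are $((pt)^2,\,-t^2(p^2s+pqr-q^2))$. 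None of these three steps is carried out. Your proposed reduction via the specialisation $t=1$, $q=p$, $r=0$ to the family $E_t$ of Section~8 is also only suggestive, since the Section~8 and Section~9 statements you would lean on are themselves conjectures in the paper, not theorems.
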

\begin{conjecture} If $r=1$, $t=1$ and $s=q-p+1$, then the sequence $a_n(p,q,r,s,t)=a_n(p,q,1,q-p+1,1)$ has a Hankel transform that is a $(p^2, p^3-pq+q^2-p^2(1+q))$ Somos $4$ sequence.
\end{conjecture}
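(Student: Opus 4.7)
The plan is to start from the closed-form generating function of $a_n(p,q,r,s,t)$ established in the Proposition of this section, specialize to $r = t = 1$ and $s = q-p+1$, and then push everything through the orthogonal polynomial / Jacobi continued fraction machinery developed in the Appendix. Under this specialization the coefficient $-q+pr+s-t$ vanishes identically, and $u(x) = u(x;p,q,1,q-p+1,1)$ reduces to
\[
u(x) \;=\; \frac{1+(p-1)x}{1-x-(q-p-1)x^2}\, c\!\left(\frac{x^2(1+(p-1)x)}{(1-x-(q-p-1)x^2)^2}\right),
\]
or equivalently $u(x)$ is the distinguished branch of the quadratic
\[
x^2 u(x)^2 - \bigl(1-x-(q-p-1)x^2\bigr) u(x) + (1+(p-1)x) \;=\; 0.
\]
This is a Riordan-array expression $(g,f)\cdot c$ with $f = x^2 g^2 / (1+(p-1)x)$, precisely the shape from which the Appendix material extracts a three-term orthogonal polynomial recurrence and a Jacobi continued fraction $u = \mathcal{J}(\alpha_0,\alpha_1,\ldots;\beta_1,\beta_2,\ldots)$.

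From the standard identity $h_n = \prod_{k=1}^n \beta_k^{\,n-k+1}$, the Somos $4$ property for $h_n$ translates into an algebraic recurrence on the sequence $\{\beta_n\}$. Guided by Sections 8 and 9, I would search for an elliptic curve $E_{p,q}: y^2 + pxy + y = x^3 + b\,x^2 + c\,x$ with $b = b(p,q)$, $c = c(p,q)$ and a rational base point $P_{p,q}$ whose multiples $nP_{p,q}$ encode the $\beta_n$ under a QRT map; the Hone--Yura correspondence between elliptic divisibility sequences and Somos $4$ sequences then delivers the conjectured coefficients $(p^2,\, p^3 - pq + q^2 - p^2(1+q))$. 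A strong consistency check is already available: the previous conjecture asserts that $\tfrac{1}{p} a_{n+1}(p,q,r,s,t)$ has Hankel transform a $\bigl((pt)^2,\, -t^2(p^2 s + pqr - q^2)\bigr)$ Somos $4$ sequence, and under $r = t = 1$, $s = q-p+1$ the Somos parameters collapse to exactly $(p^2,\, p^3 - pq + q^2 - p^2(1+q))$, so the present conjecture should be deducible from the previous one once the Hankel transform of $a_n$ is related to that of its shifted, rescaled partner.

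The main obstacle is pinning down the curve $E_{p,q}$ and the point $P_{p,q}$ explicitly and rigorously justifying the QRT/elliptic step: the $\beta_n$ will not admit closed-form expressions in general, and the translation between the three-term orthogonal recurrence and the addition law on $E_{p,q}$ has to be handled carefully. A pragmatic attack is to (i) verify the Somos $4$ recurrence for $h_0,\ldots,h_N$ at a small $N$ directly from the quadratic equation for $u(x)$, fixing the initial data for both $\{h_n\}$ and the candidate elliptic data; (ii) use uniqueness of a Somos $4$ sequence given four consecutive terms and a specified pair $(\alpha,\beta)$ to reduce the inductive step to an algebraic identity in the $\beta_n$; and (iii) complete the argument either by verifying that identity directly or by combining the previous conjecture with the elementary generating-function relation $u(x) = 1 + p x\, v(x)$, where $v(x)$ is the generating function of $\tfrac{1}{p}a_{n+1}$, together with the paper's earlier remark that replacing $g(x)$ by $1/(1-x-x^2 g(x))$ prepends a $1$ to the Hankel transform.
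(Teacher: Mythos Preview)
The statement you are trying to prove is explicitly labeled a \emph{Conjecture} in the paper; there is no proof in the paper to compare against. The author states in the Conclusions that ``proofs of results concerning Hankel transforms and Somos sequences can be elusive \ldots\ hence we couch some proposed results as conjectures.'' So the right question is simply whether your outline constitutes a proof, and it does not.

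Your specialization of the generating function is correct, and your consistency check---that under $r=t=1$, $s=q-p+1$ the Somos parameters $((pt)^2,-t^2(p^2s+pqr-q^2))$ from the preceding conjecture collapse to $(p^2,\,p^3-pq+q^2-p^2(1+q))$---is a nice observation. But the strategy built on it has two genuine gaps. First, step (iii) proposes to reduce the present statement to the \emph{previous} conjecture about $\tfrac{1}{p}a_{n+1}$; since that statement is itself unproven in the paper, a successful reduction would only show one conjecture implies the other, not prove either. Second, the reduction you sketch does not actually go through: the relation $u(x)=1+px\,v(x)$ connects the Hankel transform of the \emph{shifted} sequence $(a_{n+1})_{n\ge 0}$ to that of $\tfrac{1}{p}a_{n+1}$ (by a factor $p^{n+1}$), but it says nothing about the Hankel transform of the \emph{unshifted} sequence $(a_n)_{n\ge 0}$. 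The ``prepend a $1$'' trick $g\mapsto 1/(1-x-x^2 g)$ is not the operation relating $v$ to $u$, and in general the Hankel determinants $|a_{i+j}|$ and $|a_{i+j+1}|$ are governed by different combinations of the Jacobi coefficients $(\alpha_n,\beta_n)$, with the latter depending on the $\alpha_n$ as well. So you cannot simply transport a Somos $4$ property from one to the other.

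Your alternative route---identifying an explicit elliptic curve $E_{p,q}$ and point $P_{p,q}$ so that the $\beta_n$ arise from a QRT map, and then invoking the Hone--Swart correspondence---is the natural line of attack, and you correctly flag it as the main obstacle. But as written it is a plan, not an argument: neither the curve nor the point is produced, and the translation between the three-term recurrence and the addition law is left to be ``handled carefully.'' Until that step is made explicit (or some other direct determinantal identity is exhibited), the conjecture remains open.
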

We note that in this last case, the generating function of the sequence $a_n$ will be given by
$$\left(\frac{1-(1-p)x}{1-x+(p+q-1)x^2}, \frac{x^2(1-(1-p)x)}{(1-x+(p+q-1)x^2)^2}\right)\cdot c(x).$$
In another direction, it is interesting to consider sequences with generating functions of the form
$$\frac{1-x-\alpha x^2}{1-x-x^2}c\left(\frac{x^2(1-x-\alpha x^2)}{(1-x-x^2)^2}\right).$$
For these, we have the following conjecture.
\begin{conjecture}
The Hankel transform $h_n$ of the sequence with generating function $$\frac{1-x-\alpha x^2}{1-x-x^2}c\left(\frac{x^2(1-x-\alpha x^2)}{(1-x-x^2)^2}\right)$$ is given by
$$h_n=(2-\alpha)^{\lfloor \frac{(n+1)^2}{4} \rfloor} [x^n] \frac{(1+x)(1+(\alpha-2)x^2)}{1-3x^2-(\alpha-2)x^4}.$$
\end{conjecture}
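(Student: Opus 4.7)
The plan is to derive a Jacobi continued fraction for the generating function $u(x)$ of the conjecture and then extract the Hankel transform via the classical identity $h_n = \prod_{k=1}^{n} \beta_k^{n+1-k}$.

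First I would identify $u(x)$ as the specialization of the third-order recurrence proposition of this section at the parameters $(p,q,r,s,t) = (0,\,2-\alpha,\,1,\,3,\,1)$. The functional equation derived in that proof then yields the quadratic
\[
x^2\, u(x)^2 - (1-x-x^2)\, u(x) + (1 - x - \alpha x^2) = 0,
\]
equivalently $u = (1-x-\alpha x^2)/\bigl((1-x-x^2) - x^2 u\bigr)$. Writing $\beta := 2 - \alpha$ and $r_n := [x^n]\frac{(1+x)(1-\beta x^2)}{1-3x^2+\beta x^4}$, I would then expand $u = 1/(1 - \alpha_0 x - \beta_1 x^2 g_1)$, expand each $g_k$ similarly, and compute the first several coefficients. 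These computations reveal the pattern $\alpha_0 = 0$ together with, for $k \geq 1$,
\[
\beta_{2k-1} \;=\; \frac{\beta\, r_{2k-4}}{r_{2k-2}}, \qquad \beta_{2k} \;=\; \frac{r_{2k}}{r_{2k-2}} \qquad (r_{-2} := 1),
\]
satisfying $\beta_{2k-1} + \beta_{2k} = 3$; this identity is exactly the linear recurrence $r_{2k} = 3\, r_{2k-2} - \beta\, r_{2k-4}$ obeyed by the coefficients of the conjectured rational function. Substituting into the product formula, the $r$-factors telescope while the $\beta$-exponents accumulate to exactly $\lfloor (n+1)^2/4 \rfloor$, producing the desired closed form $h_n = (2-\alpha)^{\lfloor (n+1)^2/4 \rfloor}\, r_n$.

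The main obstacle is certifying the J-fraction expansion in general. Because the $\beta_k$ are rational (not polynomial) in $\alpha$, the combinatorial Motzkin-path route to J-fractions does not apply immediately, and one must instead argue algebraically: implicitly define the tails $g_1, g_2, \ldots$ through the continued fraction and use the quadratic for $u$ together with the linear recurrence for $r_n$ to verify that each $g_k$ satisfies the required reciprocal relation. A complementary route is to show directly that the conjectured $(2-\alpha)^{\lfloor (n+1)^2/4 \rfloor}\, r_n$ satisfies a Somos $4$ type recurrence, as suggested by a multiplicative exponential factor attached to a linearly-recurrent sequence, and then to match initial values against the Hankel determinants computed directly for $n = 0,1,2,3,4$. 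This second approach would tie the proof into the Somos-style conjectures appearing earlier in the paper, and into the elliptic curve circle of ideas motivating the note.
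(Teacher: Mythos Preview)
The paper states this result as a \emph{conjecture} and does not supply a proof; indeed the conclusion explicitly says that ``proofs of results concerning Hankel transforms and Somos sequences can be elusive \ldots hence we couch some proposed results as conjectures''. There is therefore no proof in the paper to compare your attempt against, and your proposal should be read as an attempt to \emph{settle} an open conjecture rather than to reproduce an existing argument.

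On its own merits, your outline is natural and the preliminary steps are correct: the identification with the parameters $(p,q,r,s,t)=(0,2-\alpha,1,3,1)$ is exactly what the paper itself notes just after the conjecture, and the quadratic $x^{2}u^{2}-(1-x-x^{2})u+(1-x-\alpha x^{2})=0$ follows cleanly from the proof of the third-order proposition. The Heilermann product formula is also the right tool once a J-fraction is in hand.

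The genuine gap is the one you yourself flag: you have \emph{asserted} the pattern $\beta_{2k-1}=\beta\,r_{2k-4}/r_{2k-2}$, $\beta_{2k}=r_{2k}/r_{2k-2}$ (and implicitly some pattern for the $\alpha_k$) on the basis of the first few cases, but nothing in the proposal proves it. The difficulty is real: the quadratic for $u$ has coefficients that are cubic in $x$, so the standard trick of peeling off one level of the J-fraction and landing on a functional equation \emph{of the same shape} does not apply directly; after one step the tail $g_{1}$ satisfies a quadratic whose coefficients are no longer of the form $1-x-\gamma x^{2}$. To make your argument go through you would need either (i) an explicit two-step self-similarity, showing that $g_{2}$ satisfies a quadratic of the original shape with $\alpha$ replaced by some rational function of $\alpha$, together with the linear recurrence $r_{2k}=3r_{2k-2}-\beta r_{2k-4}$ to close the induction, or (ii) a direct verification that the Hankel determinants satisfy the bilinear relation implied by your product form, which amounts to the $\beta_{n}=h_{n}h_{n-2}/h_{n-1}^{2}$ identity. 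Your ``complementary route'' via a Somos~$4$ recurrence is in the same state: it is a plausible strategy, but matching four initial values and asserting a Somos recurrence does not prove the recurrence holds for all $n$. As written, the proposal is a promising plan of attack on an open problem, not a proof.
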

Note that
$$\frac{(1+x)(1+(\alpha-2)x^2)}{1-3x^2-(\alpha-2)x^4}=(1+x)f(x^2;\alpha),$$ where
$$f(x;\alpha)=\frac{1 + (\alpha - 2) x} {1 - 3 x - (\alpha - 2) x^2}.$$
For instance, when $\alpha=1$, we get the generating function $\frac{1-x}{1-3x+x^2}$ of the bisected Fibonacci numbers $F_{2n+1}$ $1,2,5,13,34, 89,\ldots$ \seqnum{A001519}$(n+1)$.
In general, if we set
\begin{align*}d_n&=\sum_{k=0}^n \binom{k}{n-k}(\alpha-2)^{n-k}3^{2k-n}+(\alpha-2)\sum_{k=0}^n \binom{k}{n-k-1}(\alpha-2)^{n-k-1}3^{2k-n+1}\\
&=\sum_{k=0}^n \left(\binom{k}{n-k}+3 \binom{k}{n-k-1}\right)(\alpha-2)^{n-k}3^{2k-n},\end{align*}
then we have
$$h_n =(2-\alpha)^{\lfloor \frac{(n+1)^2}{4} \rfloor} d_{\lfloor \frac{n}{2} \rfloor}.$$
Note that here $\binom{k}{n-k}+3 \binom{k}{n-k-1}$ is the general term of the Riordan array $(1+3x, x(1+x))$.
In fact, we have
$$f(x;\alpha)=\left(1+(\alpha-2)x, x\left(1+ \frac{\alpha-2}{3}x)\right)\right)\cdot \frac{1}{1-3x}.$$
\begin{example} We take $\alpha=1$, to get the equation
$$\left(1-x, x\left(1-\frac{x}{3}\right)\right)\cdot \frac{1}{1-3x}=\frac{1-x}{1-3x+x^2}=\sum_{n=0}^{\infty}F_{2n+1}x^n,$$ which is the generating function of the Fibonacci bisection $F_{2n+1}$ that begins $1,2,5,13,34,\ldots$.
We have $2-\alpha=1$. In this case the sequence we get begins
$$1, 0, 1, 1, 4, 7, 20, 43, 112, 263, 669, 1640, 4166,\ldots$$ with generating function
$$c\left(\frac{x^2}{1-x-x^2}\right).$$
The Hankel transform is then given by $F_{2 \lfloor \frac{n}{2}\rfloor +1}$, or
$$1,1,2,2,5,5,13,13,34,34,\ldots.$$
\end{example}
\begin{example} We take $\alpha=-1$. Then $2-\alpha=3$.
The sequence in question has generating function
$$\left(\frac{1-x+x^2}{1-x-x^2}, \frac{x^2(1-x+x^2}{(1-x-x^2)^2}\right)\cdot c(x).$$
We obtain the sequence that begins
$$1, 0, 3, 3, 12, 21, 66, 147, 426, 1065, 3009, 7986, 22476,\ldots.$$
The Hankel transform begins
$$1, 3, 0, 0, -2187, -59049, -4782969,\ldots.$$ Dividing this by $3^{\lfloor \frac{(n+1)^2}{4} \rfloor}$, we obtain
$$1, 1, 0, 0, -3, -3, -9,\ldots$$ which is the expansion of $(1+x)(1-3x^2)/(1-3x^2+3x^4)$. This is the doubling of the sequence
$$1, 0, -3, -9, -18, -27, -27, 0, 81, 243, 486,\ldots$$ which has generating function $\frac{1-3x}{1-3x+3x^2}$.
\end{example}
\begin{example} The case of $\alpha=2$ is of special interest. We have $2-\alpha=0$, so that the Hankel transform is just the sequence $1,0,0,0,\ldots$. This is explained by the fact that
$$\left(\frac{1-x-2x^2}{1-x-x^2}, \frac{x^2(1-x-2x^2)}{(1-x-x^2)^2}\right)\cdot c(x)=\frac{1-x-2x^2}{1-x-x^2}c\left(\frac{x^2(1-x-2x^2)}{(1-x-x^2)^2}\right)=1.$$
\end{example}
A choice of parameters $(p,q,r,s,t)$ that satisfies this context is given by $(p,q,r,s,t)=(0,2-\alpha, 1,3,1)$. Thus the recurrence
$$a_n = a_{n-1} + 3 a_{n-2} + \sum_{k=1}^{n-3} a_k a_{n-k-2},$$ with $a_0=1, a_1=0, a_2=\alpha-2$ will have a solution with generating function given by $\frac{1-x-\alpha x^2}{1-x-x^2}c\left(\frac{x^2(1-x-\alpha x^2)}{(1-x-x^2)^2}\right)$.
We can reformulate our conjecture in the following manner.
\begin{conjecture} The solution $a_n(\beta)$ of the recurrence
$$a_n = a_{n-1} + 3 a_{n-2} + \sum_{k=1}^{n-3} a_k a_{n-k-2},$$ with $a_0=1, a_1=0, a_2=\beta$ has a Hankel transform $h_n(\beta)$ given by
$$h_n(\beta)=\beta^{\lfloor \frac{(n+1)^2}{4} \rfloor} [x^n] \frac{(1+x)(1-\beta x^2)}{1-3x^2+\beta x^4}.$$
\end{conjecture}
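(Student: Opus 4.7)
The plan is to compute the Jacobi continued fraction expansion of $u(x)=\sum_{n\ge 0}a_n(\beta)x^n$ and apply the Heilermann product formula $h_n=\prod_{k=1}^{n}\beta_k^{\,n+1-k}$. Applying the Proposition on third-order recurrences with $(p,q,r,s,t)=(0,\beta,1,3,1)$ gives the closed form
$$u(x)=\frac{(1-x-x^2)-\sqrt{1-2x-5x^2+6x^3+(9-4\beta)x^4}}{2x^2},$$
which satisfies the quadratic equation $x^2 u^2-(1-x-x^2)u+(1-x+(\beta-2)x^2)=0$.

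From this I would extract the J-fraction coefficients level by level, using the standard identification $1/u=1-\alpha_0 x-\beta_1 x^2 v_1$ and recursing on the tails. The initial data $\alpha_0=0$, $\beta_1=\beta$, $\alpha_1=1$, $\beta_2=3-\beta$, $\alpha_2=0$, $\beta_3=\beta/(3-\beta)$, $\beta_4=(9-4\beta)/(3-\beta)$ already points to the closed form
$$\beta_{2k}=\frac{F_k}{F_{k-1}},\qquad \beta_{2k+1}=\frac{\beta\,F_{k-1}}{F_k},$$
where $F_0=1$, $F_1=3-\beta$, and $F_k=3F_{k-1}-\beta F_{k-2}$ for $k\ge 2$. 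The generating function of $(F_k)$ is $(1-\beta y)/(1-3y+\beta y^2)$, and consequently
$$F_{\lfloor n/2\rfloor}=[x^n]\frac{(1+x)(1-\beta x^2)}{1-3x^2+\beta x^4}.$$

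Once the J-fraction pattern is in hand, the formula for $h_n$ follows from Heilermann by a telescoping computation. Splitting the product into contributions from odd and even indices, the exponent of $\beta$ is $n+\sum_{j\ge 1,\,2j+1\le n}(n-2j)$, which evaluates to $m(m+1)$ when $n=2m$ and to $(m+1)^2$ when $n=2m+1$, that is, precisely $\lfloor(n+1)^2/4\rfloor$. Every $F_j$ with $1\le j<\lfloor n/2\rfloor$ receives four balanced contributions from $\beta_{2j},\beta_{2j+1},\beta_{2j+2},\beta_{2j+3}$ that cancel out, and a short boundary analysis at the top of the product leaves exactly one surviving factor $F_{\lfloor n/2\rfloor}$. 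Combining these two observations yields the claimed identity.

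The main obstacle is rigorously establishing the J-fraction pattern in step 2, rather than the telescoping, which is mechanical. My strategy would be to derive from the quadratic equation for $u(x)$ a corresponding quadratic for each tail $v_k$, then show that the M\"obius transformation sending $v_{2k-1}$ to $v_{2k+1}$ has fixed-point equation equivalent to the recurrence $F_{k+1}=3F_k-\beta F_{k-1}$. The Riordan-array representation of $u(x)$ carries the factor $f(x)=x^2(1-x-(2-\beta)x^2)/(1-x-x^2)^2$ whose leading $x^2$ signals that $u(x)$ is a bisection-type moment generating function, so it should be more natural to work with pairs of consecutive levels $(\beta_{2k-1},\beta_{2k})$ than to induct one level at a time.
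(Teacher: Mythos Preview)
The paper does not prove this statement: it is explicitly stated as a \emph{Conjecture}, and in the Conclusions the author writes that ``proofs of results concerning Hankel transforms and Somos sequences can be elusive \ldots\ hence we couch some proposed results as conjectures.'' There is therefore no paper proof to compare your proposal against; you are attempting to prove what the paper leaves open.

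On its own merits, your approach is sound. The closed form for $u(x)$ and the quadratic $x^2u^2-(1-x-x^2)u+(1-x+(\beta-2)x^2)=0$ are correct instantiations of the paper's third-order proposition at $(p,q,r,s,t)=(0,\beta,1,3,1)$. The initial J-fraction data $\alpha_0=0$, $\beta_1=\beta$, $\alpha_1=1$, $\beta_2=3-\beta$, $\beta_3=\beta/(3-\beta)$, $\beta_4=(9-4\beta)/(3-\beta)$ check against the Hankel determinants $h_1=\beta$, $h_2=\beta^2(3-\beta)$, $h_3=\beta^4(3-\beta)$ computed directly from $a_0,\ldots,a_6$. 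Your exponent count for $\beta$ is right, and the telescoping of the $F_j$'s in the Heilermann product is exactly as you describe: for interior $j$ the four contributions $(n{+}1{-}2j)-(n{-}2j)-(n{-}1{-}2j)+(n{-}2{-}2j)$ cancel, and the boundary leaves a single surviving $F_{\lfloor n/2\rfloor}$.

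You have correctly isolated the real work: rigorously establishing the pattern $\beta_{2k}=F_k/F_{k-1}$, $\beta_{2k+1}=\beta F_{k-1}/F_k$ with $F_k=3F_{k-1}-\beta F_{k-2}$. Your proposed strategy---derive a quadratic for each tail $v_k$ from the master quadratic for $u$, and show that the two-step map on tails is governed by the recurrence for $F_k$---is the natural one and is known to succeed in closely analogous settings (e.g., Xin's proof of the Somos-$4$ Hankel conjecture). If you carry this out, you will have a genuine proof of a result the paper only conjectures.
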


We can convert from the recurrence to the Riordan array and vice versa.
Thus given $(p,q,r,s,t)$, we obtain $(a,b,c,d,m)$ as follows.
\begin{align*}
a&=p-r\\
b&=q-pr-s+t\\
c&=-r\\
d&=-s+2t\\
m&=t \end{align*}
Given $(a,b,c,d,m)$, we obtain $(p,q,r,s,t)$ as follows.
\begin{align*}
p&=a-c\\
q&=-ac+b+c^2-d+m\\
r&=-c\\
s&=-d+2m\\
t&=m. \end{align*}

A related result is the following.
\begin{proposition} Assume that $a_n$ satisfies the following convolution recursion relation:
$$a_n=
\begin{cases}
1, \quad\quad \text{if}\quad n=0;\\
r, \quad\quad \text{if}\quad n=1;\\
r a_{n-1}+ s a_{n-2} + t \sum_{i=0}^{n-2} a_i a_{n-2-i},\quad \text{if}\quad n>1.
\end{cases}$$
Then the sequence $a_n$ has generating function
$$\left(\frac{1}{1-rx-sx^2}, \frac{tx^2}{(1-rx-sx^2)^2}\right)\cdot c(x),$$ and the Hankel transform of $a_n$ is a $((pt)^2, t^2(t+s)^2-r^2t^3)$ Somos $4$ sequence.
\end{proposition}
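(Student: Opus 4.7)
The plan is to handle the proposition in two stages, mirroring the template of the analogous Propositions earlier in this note. First I would translate the recurrence into a functional equation for $u(x)=\sum_{n\ge 0}a_n x^n$, solve the resulting quadratic via the Catalan--Riordan identity from the Preliminaries, and identify the answer as the stated Riordan array acting on $c(x)$. Second, I would address the Somos~$4$ claim by observing that the present recurrence is a reparameterisation of the third-order recurrence of Section~4, so that the Hankel assertion aligns with the conjecture stated there.

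For the generating function, summing the recurrence over $n\ge 2$ yields
$$u(x)-1-rx=rx\bigl(u(x)-1\bigr)+sx^{2}u(x)+tx^{2}u(x)^{2},$$
which simplifies to $tx^{2}u(x)^{2}-(1-rx-sx^{2})u(x)+1=0$. Applying the quadratic-to-Catalan formula $u=(-c/b)\,c(ac/b^{2})$ from the Preliminaries with $a=tx^{2}$, $b=-(1-rx-sx^{2})$, $c=1$, together with the Fundamental Theorem of Riordan Arrays, produces $u(x)=\bigl(\tfrac{1}{1-rx-sx^{2}},\tfrac{tx^{2}}{(1-rx-sx^{2})^{2}}\bigr)\cdot c(x)$ directly.

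For the Somos~$4$ claim, I would split the convolution as $\sum_{i=0}^{n-2}a_i a_{n-2-i}=2a_{n-2}+\sum_{k=1}^{n-3}a_k a_{n-k-2}$, recasting the recurrence in the form of the Section~4 third-order recurrence under the parameter map $(p,q,r,s,t)\longmapsto(r,\,r^{2}+s+t,\,r,\,s+2t,\,t)$, with $a_{2}=r^{2}+s+t$ read off from the $n=2$ instance. Substituting this mapping into the Somos~$4$ parameters $\bigl((pt)^{2},\,-t^{2}(p^{2}s+pqr-q^{2})\bigr)$ predicted by the conjecture of Section~4 collapses, after a short expansion, to $\bigl((rt)^{2},\,t^{2}(t+s)^{2}-r^{2}t^{3}\bigr)$, matching the statement.

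The main obstacle is that the Section~4 statement is still only a conjecture, and it refers to the shifted sequence $\tfrac{1}{p}a_{n+1}$ rather than $a_n$. The shift is essentially harmless for Somos~$4$ structure, because multiplying a Somos~$4$ sequence by a constant or by a geometric factor $c^{n}$ preserves the $(\alpha,\beta)$-parameters, but the underlying conjecture still has to be established. The natural route, in keeping with the elliptic-curve theme of this paper, is to derive a Jacobi continued fraction $u(x)=\mathcal{J}(\alpha_0,\alpha_1,\ldots;\beta_1,\beta_2,\ldots)$ whose $\beta_k$ are expressible through the $x$- and $y$-coordinates of the multiples of a point on the elliptic curve naturally associated with $(r,s,t)$, as in the motivating example. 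Once such a realisation is in hand, the classical identity $h_n=\prod_{k=1}^{n}\beta_k^{\,n+1-k}$ transports the Somos~$4$ relation satisfied by the corresponding elliptic divisibility sequence onto $h_n$ via Weierstrass $\sigma$-function identities, and this is where I expect the essential technical weight to sit.
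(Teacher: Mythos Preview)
Your derivation of the generating function is correct and matches the paper exactly: the paper writes the recurrence as $u(x)=1+rx\,u(x)+sx^{2}u(x)+tx^{2}u(x)^{2}$ and solves, just as you do.

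For the Somos~$4$ assertion, though, the paper does not argue at all: it simply cites the Chang--Hu paper (reference \cite{Conj}), where this Hankel/Somos result is established. Your attempt to route through the Section~4 conjecture is, as you yourself note, not a proof, and the bridge you propose from the Hankel transform of $\tfrac{1}{p}a_{n+1}$ back to that of $a_n$ has a real gap: the two Hankel transforms are built from different Hankel matrices (entries $a_{i+j+1}$ versus $a_{i+j}$), and they are \emph{not} related by a constant or a geometric factor $c^{n}$, so the invariance you invoke does not apply. The elliptic-curve programme you outline at the end is in the right spirit and is consonant with the Appendix material on $x(nP)$, $y(nP)$ and the Heilermann formula, but it is considerably more machinery than the paper intends here, since the result is already available in the cited literature.
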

Note that $\sum_{i=0}^{n-2} a_i a_{n-2-i}$ expands to give $(a_0 a_{n-2}+ \cdots + a_{n-2}a_0)$.
\begin{proof} For a proof of the Hankel transform assertion, see \cite{Conj}.
We translate the recurrence to the equality
$$u(x) = 1+ r x u(x)+ sx^2u(x)+ t x^2 u(x)^2.$$ Solving for $u(x)$ now gives the result.
\end{proof}
Examples of such sequences are given in the following table.
\begin{center}
\begin{tabular}{|c|l|c|}
\hline $(r,s,t)$ & Annnnnn & Description\\
\hline $(1,1,1)$ & \seqnum{A128720} & Skew Dyck paths avoiding UUU \\
\hline $(2,1,1)$ & \seqnum{A085139}$(n+1)$ & G.f. is $\mathcal{J}(2,1,2,2,1,2,2,1,2,\dots;2,2,1,2,2,1,\ldots)$ \\
\hline $(1,2,1)$ & \seqnum{A174171} & Chebyshev transform of Motzkin numbers \\
\hline $(2,2,1)$ & \seqnum{A174403} & Hankel transform is \seqnum{A174404} \\
\hline $(3,1,1)$ & \seqnum{A084782}$(n+1)$ & \seqnum{A084782}$(n)=\sum_{j=0}^n \sum_{i=0}^j a_i a_{j-i}F_{n-j}$\\
\hline $(1,2,-1)$ &\seqnum{A187256} & Peakless Motzkin paths, with level steps in two colors\\ \hline
\end{tabular}
\end{center}
The sequence $a_n$ begins
$$1, r, r^2 + s + t, r(r^2 + 2s + 3t), r^4 + r^2(3s + 6t) + s^2 + 3st + 2t^2,\ldots.$$
The sequence with prepended $1$ that begins $1,1,r,r^2+s+t,\ldots$ then has generating function
$$\left(\frac{1-(r-t-1)x-sx^2}{1-(r-2t)x-sx^2},\frac{x(1-(r-t-1)x-sx^2}{(1-(r-2t)x-sx^2)^2}\right)\cdot c(x).$$
When $t=1$, we get
$$\left(1,\frac{x}{1-(r-2)x-sx^2}\right)\cdot c(x).$$
In fact, we can modify the above proposition as follows.
\begin{proposition} Assume that $a_n$ satisfies the following convolution recursion relation:
$$a_n=
\begin{cases}
1, \quad\quad \text{if}\quad n=0;\\
p, \quad\quad \text{if}\quad n=1;\\
r a_{n-1}+ s a_{n-2} + t \sum_{i=0}^{n-2} a_i a_{n-2-i}, \quad\text{if}\quad n>1.
\end{cases}$$
Then the sequence $a_n$ has generating function
$$\left(\frac{1+(p-r)x}{1-rx-sx^2}, \frac{tx^2(1+(p-r)x}{(1-rx-sx^2)^2}\right)\cdot c(x),$$ and the Hankel transform of $a_n$ is a $((pt)^2, t^2((t+s)^2-p^2s+prs+prt-2p^2t)$ Somos $4$ sequence.
\end{proposition}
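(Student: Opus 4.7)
The plan is to mirror the proof strategy already used for the earlier propositions (the two-parameter and three-parameter Catalan-Schroeder results), since the recurrence here is of exactly the same shape. First I translate the convolution recurrence into a functional equation for the generating function $u(x)=\sum_{n \ge 0} a_n x^n$. Using the initial conditions $a_0=1$, $a_1=p$ to account for the $n=0,1$ terms, and recognising the convolution as the coefficient of $x^{n-2}$ in $u(x)^2$, the relation $a_n = r a_{n-1}+s a_{n-2}+t\sum_{i=0}^{n-2} a_i a_{n-2-i}$ for $n \ge 2$ assembles into
$$u = 1 + px + rx(u-1) + sx^2 u + tx^2 u^2.$$

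Next I rearrange this as a quadratic in $u$:
$$tx^2\, u^2 - (1-rx-sx^2)\,u + (1+(p-r)x) = 0,$$
and apply the generic identity $\frac{-c}{b}c\!\left(\frac{ac}{b^2}\right) = \frac{1}{2a}(-b+\sqrt{b^2-4ac})$ established in Section~2, with $a=tx^2$, $b=-(1-rx-sx^2)$, $c=1+(p-r)x$. This immediately produces
$$u(x) = \frac{1+(p-r)x}{1-rx-sx^2}\, c\!\left(\frac{tx^2(1+(p-r)x)}{(1-rx-sx^2)^2}\right),$$
which by the Fundamental Theorem of Riordan arrays is exactly the Riordan-array action claimed in the statement. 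The branch of the square root is fixed by requiring $u(0)=1$, exactly as in the proof of the earlier proposition, so there is no ambiguity.

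For the Somos~$4$ assertion, the strategy is to pass to a Jacobi continued fraction. The generating function above has the form derived in the preceding propositions, so $u(x)$ admits a $\mathcal{J}$-expansion whose first coefficients $\alpha_0, \beta_1$ encode the parameters $(p,r,s,t)$. From the theory relating Hankel determinants to the $\beta_i$ (Heilermann's formula, $h_n = \beta_1^n \beta_2^{n-1}\cdots\beta_n$) one then has to show that the resulting Hankel transform satisfies the Somos~$4$ recursion with the claimed parameters $(\alpha,\beta)=((pt)^2,\, t^2((t+s)^2 - p^2 s + prs + prt - 2p^2 t))$. The cleanest route is to appeal to the machinery of reference~[Conj] (cited for the specialisation $p=r$ in the previous proposition), whose argument carries over verbatim once the extra parameter $p$ is tracked through the $\beta_j$ recurrence.

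The routine part is the first step: translating the recurrence to a quadratic and reading off the Riordan factorisation is purely algebraic. The main obstacle is the Somos~$4$ identity for the Hankel transform, as this requires either invoking the elliptic-curve/$\sigma$-function framework (the link to the Weierstrass $\sigma$ function mentioned in Section~2) or an explicit induction on the $\beta_j$; in both approaches one must verify that the polynomial $(t+s)^2 - p^2 s + prs + prt - 2p^2 t$ is exactly the coefficient that drops out of the Somos recursion, which is a delicate bookkeeping computation rather than a conceptual difficulty.
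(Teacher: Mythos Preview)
Your approach is essentially identical to the paper's. The paper does not give an explicit proof of this proposition at all; it presents it as a straightforward modification of the immediately preceding proposition (the case $p=r$), whose proof consists of exactly the two steps you carry out: translating the recurrence into the functional equation $u = 1 + rxu + sx^2 u + tx^2 u^2$ (here with the extra $+\,px - rx$ terms coming from the altered initial condition) and then solving the resulting quadratic to obtain the Riordan factorisation. For the Somos~$4$ assertion the paper, like you, simply defers to the reference \cite{Conj}; your remark that the argument there ``carries over verbatim once the extra parameter $p$ is tracked through'' is precisely the level of justification the paper itself offers.
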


\section{A further recurrence}
We now consider the recurrence
$$a_n=u a_{n-1}+ v a_{n-2}+ w a_{n-3} + t \sum_{k=1}^{n-4} a_k a_{n-k-3},$$ with
$a_0=1$, $a_1=p$, $a_2=q$, and $a_3=s$. We note that the term $\sum_{k=1}^{n-4} a_k a_{n-k-3}$ here expands to give
$(a_1 a_{n-4} + \cdots + a_{n-4}a_1)$.
We denote the solution of this recurrence by $a_n(p,q,s,u,v,w,t)$. We then have the following result.
\begin{proposition} The generating function of the sequence $a_n(p,q,s,u,v,w,t)$ is given by
$$(\frac{1+(p-u)x-(v+pu-q)x^2-(w-s-t+qu+pv)x^3}{1-ux-vx^2-(w-2t)x^3},$$
$$\quad\quad\frac{tx^3(1+(p-u)x-(v+pu-q)x^2-(w-s-t+qu+pv)x^3)}{(1-ux-vx^2-(w-2t)x^3)^2})\cdot c(x).$$
\end{proposition}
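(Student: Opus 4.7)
The plan is to follow the same template used in the proofs of the Catalan--Schroeder propositions in the preceding sections. I would convert the convolution recurrence into a quadratic equation for the generating function, select the branch with the correct value at $x=0$, and then rewrite the solution as an application of a Riordan array to $c(x)$ via the Fundamental Theorem. Let $A(x) = \sum_{n \ge 0} a_n(p,q,s,u,v,w,t)\, x^n = 1 + px + qx^2 + sx^3 + \sum_{n \ge 4} a_n x^n$ (I use $A$ for the generating function to avoid clashing with the parameter $u$).

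The first task is to compute the generating function contribution of each of the four sums appearing on the right-hand side of the recurrence, bearing in mind that the recurrence is valid only for $n \ge 4$. The three shifted linear terms contribute $ux(A - 1 - px - qx^2)$, $vx^2(A - 1 - px)$ and $wx^3(A - 1)$. The convolution $\sum_{k=1}^{n-4} a_k a_{n-k-3}$ is empty for $n = 4$ and, for $n \ge 5$, equals the coefficient of $x^{n-5}$ in $\bigl((A(x) - 1)/x\bigr)^2$; summing over $n \ge 4$ therefore produces $x^3(A(x) - 1)^2$, in direct analogy with the shifted-square trick used for the second- and third-order recurrences in the paper.

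Collecting all contributions yields the functional equation
$$A = 1 + px + qx^2 + sx^3 + ux(A-1-px-qx^2) + vx^2(A-1-px) + wx^3(A-1) + tx^3(A-1)^2,$$
which, after expanding and regrouping by powers of $A$, simplifies to the quadratic
$$tx^3 A^2 - B(x)\, A + G(x) = 0,$$
with $B(x) = 1 - ux - vx^2 - (w-2t)x^3$ and $G(x) = 1 + (p-u)x - (v + pu - q)x^2 - (w - s - t + qu + pv)x^3$ — exactly the denominator and numerator factors appearing in the claimed Riordan pair. Selecting the branch for which $A(0) = 1$ gives $A(x) = \bigl(B(x) - \sqrt{B(x)^2 - 4tx^3 G(x)}\bigr)/(2tx^3)$. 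Applying the quadratic-to-Catalan identity $(-\gamma/\beta)\, c(\alpha\gamma/\beta^2)$ derived in the Preliminaries, with $\alpha = tx^3$, $\beta = -B(x)$, $\gamma = G(x)$, rewrites this radical expression as $(G(x)/B(x))\, c\bigl(tx^3 G(x)/B(x)^2\bigr)$, which by the Fundamental Theorem of Riordan arrays is precisely $\bigl(G(x)/B(x),\, tx^3 G(x)/B(x)^2\bigr) \cdot c(x)$, as asserted.

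The main obstacle is the bookkeeping in the first two steps: one must subtract the correct initial segment of $A(x)$ in each of the three shifted linear sums, and must verify that the coefficients of $G(x)$, once all the cross-terms from $ux(\cdot)$, $vx^2(\cdot)$, $wx^3(\cdot)$ and the $-2t$ shift arising from $(A-1)^2$ are combined, collapse to exactly the cubic polynomial printed in the statement. Once the quadratic is correctly in hand the transition to Riordan form is purely formal and identical to the third-order case.
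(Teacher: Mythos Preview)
Your proposal is correct and follows exactly the template the paper uses for the analogous second- and third-order recurrences: translate the recurrence into a quadratic functional equation for the generating function, solve it, and recast the radical solution via the $(-\gamma/\beta)\,c(\alpha\gamma/\beta^2)$ identity as a Riordan array acting on $c(x)$. The paper does not spell out a proof for this particular proposition, but your argument is precisely the intended one, and your bookkeeping for $B(x)$ and $G(x)$ checks out.
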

\begin{example} The sequence $a_n(1,1,2,1,1,1,1)$ begins
$$1, 1, 1, 2, 4, 8, 16, 33, 69, 146, 312, 673, 1463,\ldots.$$
This is \seqnum{A004149}$(n+1)$, which gives the number of Motzkin paths of length $n$ with no peaks or valleys.
The generating function of this sequence is given by
$$\frac{1}{1-x} c\left(\frac{x^3}{(1-x^2)(1-x)^2}\right).$$
\end{example}
\begin{example} The sequence $a_n(1,2,3,1,1,1,1)$ which begins
$$1, 1, 2, 3, 6, 12, 25, 53, 114, 249, 550, 1227, 276,\dots$$
 is \seqnum{A162985}. It counts the number of Dyck paths of semi-length $n$ avoiding UUU, DDD and UUDUDD (Deutsch).
Its generating function is given by
$$\frac{1}{1-x-x^2+x^3}c\left(\frac{x^3}{(1-x-x^2+x^3)^2}\right).$$
Note that this generating function can be expressed as the continued fraction
$$\cfrac{1}{1-x-x^2+x^3-\cfrac{x^3}{1-x-x^2+x^3-\cfrac{x^3}{1-x-x^2+x^3\cdots}}}.$$
\end{example}
\begin{example} The sequence $a_n(1,1,2,1,0,2,1)$ which begins
$$1, 1, 1, 2, 4, 7, 13, 26, 52, 104, 212, 438, 910, 1903, 4009, 8494,\ldots$$ is \seqnum{A023431}. It counts the number of Motzkin paths of length $n$ with no peaks and no double rises. Its generating function is given by
$$g(x)=\left(\frac{1}{1-x}, \frac{x^3}{(1-x)^2}\right)\cdot c(x).$$ We then have
$$a_n(1,1,2,1,0,2,1)=\sum_{k=0}^n \binom{n-k}{2k}C_k.$$
It is interesting to note that the Riordan array $(g(x), x g(x))$ is a pseudo-involution in the Riordan group \cite{PS}.
More generally, if
$$g(x;\alpha)=\left(\frac{1}{1-\alpha x}, \frac{x^3}{(1-\alpha x)^2}\right)\cdot c(x)$$ then the Riordan array
$(g(x;\alpha), xg(x;\alpha))$ is a pseudo-involution. For example, $g(x;2)$ is the generating function of $a_n(2,4,9,2,0,2,1)$. We will then have
$$a_n(2,4,9,2,0,2,1)=\sum_{k=0}^{\lfloor \frac{n}{3} \rfloor} \binom{n-k}{2k}2^{n-3k}C_k.$$
This sequence is \seqnum{A091561} \cite{PS}. Its Hankel transform is the sequence
$$1, 0, -1, -1, 0, 1, 1, 0, -1, -1, 0,\ldots$$ with generating function $\frac{1-x}{1-x+x^2}$.
\end{example}
\begin{example} The sequence $a_n(1,2,4,1,1,2,1)$ is the RNA sequence that begins
$$1,1, 2, 4, 8, 17, 37, 82, 185, 423, 978, 2283, 5373,\ldots.$$ This is \seqnum{A004148}, with generating function $$\frac{1}{1-x-x^2} c\left(\frac{x^3}{(1-x-x^2)^2}\right).$$
The related sequence with generating function
$$\frac{1-x}{1-x-x^2} c\left(\frac{x^3(1-x)}{(1-x-x^2)^2}\right)$$ begins
$$1, 0, 1, 2, 3, 7, 14, 28, 60, 126, 268, 579, 1253,\ldots.$$
Its Hankel transform begins
$$1, 1, -2, -3, -7, 5, 32, 83, 87, -821, -2366,\ldots.$$
This is a $(1,-1)$ Somos $4$ sequence.
\end{example}
\begin{example} The sequence $a_n(1,3,6,1,2,2,1)$ begins
$$1, 1, 3, 6, 14, 33, 79, 194, 482, 1214, 3090, 7936,\ldots.$$
This sequence has its generating function given by
$$\frac{1}{1-x-2x^2}c\left(\frac{x^3}{(1-x-2x^2)^2}\right).$$
Its Hankel transform begins
$$1, 2, 1, -7, -16, -57, -113, 670, 3983, 23647,\ldots.$$
This is a $(1,-2)$ Somos $4$ sequence.
\end{example}
In general, we can conjecture that the sequences with generating function
$$\frac{1}{1-x-\alpha x^2}c\left(\frac{x^3}{(1-x-\alpha x^2)^2}\right)$$
have Hankel transforms that are $(1,-\alpha)$ Somos $4$ sequences. Such Hankel transforms begin
$$1, \alpha, -1 + \alpha, -1 + \alpha - \alpha^3, -2 \alpha + 3 \alpha^2 - \alpha^3 - \alpha^4, 1 - 3 \alpha +   3 \alpha^2 - 2 \alpha^3 + \alpha^4 - \alpha^6, $$
$$\quad 1 - 3 \alpha + 3 \alpha^2 + 3 \alpha^3 - 9 \alpha^4 +   6 \alpha^5 + \alpha^6 - 2 \alpha^7,$$  $$3 \alpha - 12 \alpha^2 + 19 \alpha^3 - 11 \alpha^4 - 3 \alpha^5 + 5 \alpha^6 + 2 \alpha^7 -   3 \alpha^8 + \alpha^{10},\ldots.$$
For instance, the Hankel transform of the sequence with generating function
$$\frac{1}{1-x+x^2}c\left(\frac{x^3}{(1-x+x^2)^2}\right)$$ is the $(1,1)$ Somos $4$ sequence \seqnum{A178627}$(n+1)$. The sequence \seqnum{A178627} is defined by the elliptic curve
$$E: y^2 + xy - y = x^3 - x^2 + x.$$
These examples show that such sequences merit more study. Further evidence of this is given by the following conjecture concerning their Hankel transforms.
\begin{conjecture} The Hankel transform of the sequence with generating function
$$\frac{1-x-x^2- \alpha x^3}{1-x-x^2-x^3}c\left(\frac{x^3(1-x-x^2-\alpha x^3)}{(1-x-x^2-x^3)^2}\right)$$ is given by
$$h_n= A_n(\alpha)(2-\alpha)^{B_n}$$
where
$$A_n(\alpha)=[x^n]\frac{1+(\alpha-2)x^2-(\alpha-2)x^3+(4\alpha-5)x^5-(\alpha-1)(\alpha-2)x^8}{1+3x^3-(\alpha-2)x^6},$$
and
$$B_n=[x^n] \frac{x(1-x+2x^2-2x^3+3x^4-3x^5+x^6)}{(1-x)^2(1-x^3)}.$$
\end{conjecture}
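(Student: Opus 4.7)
The plan is to combine the Riordan-array and Jacobi continued fraction formalism from Section 2 with the Somos 4 heuristic suggested by the elliptic-curve context. First, I would identify the generating function
$$f_\alpha(x)=\frac{1-x-x^2-\alpha x^3}{1-x-x^2-x^3}\,c\!\left(\frac{x^3(1-x-x^2-\alpha x^3)}{(1-x-x^2-x^3)^2}\right)$$
as the instance of the ``further recurrence'' proposition with parameters $(p,q,s,u,v,w,t)=(0,0,2-\alpha,1,1,3,1)$. Hence $f_\alpha$ expands as the solution of
$$a_n=a_{n-1}+a_{n-2}+3a_{n-3}+\sum_{k=1}^{n-4}a_k a_{n-k-3},\quad a_0=1,\ a_1=a_2=0,\ a_3=2-\alpha,$$
so $\alpha$ enters only through the single initial value $a_3=2-\alpha$. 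In particular, at $\alpha=2$ every $a_n$ with $n\geq 1$ vanishes, giving $h_0(2)=1$ and $h_n(2)=0$ for $n\geq 1$, and the conjecture amounts to saying that the polynomial $h_n(\alpha)$ vanishes at $\alpha=2$ to order exactly $B_n$ with cofactor $A_n(\alpha)$.

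Next I would pass to the Jacobi continued fraction $f_\alpha=\mathcal{J}(\alpha_0,\alpha_1,\dots;\beta_1,\beta_2,\dots)$. The $\beta_k$ are rational functions of $\alpha$, and the classical formula $h_n=\prod_{k=1}^{n}\beta_k^{\,n+1-k}$ applies. Let $\mu_k$ denote the valuation of $\beta_k(\alpha)$ at $\alpha=2$ (with sign, since some $\beta_k$ must acquire poles). Then $\mathrm{ord}_{\alpha=2}\,h_n=\sum_{k=1}^n(n+1-k)\mu_k$, whose generating function is $\frac{1}{(1-x)^2}\sum_{k\ge 1}\mu_k x^k$. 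Forcing equality with the target exponent generating function pins down
$$\sum_{k\ge 1}\mu_k x^k=\frac{x(1-x+2x^2-2x^3+3x^4-3x^5+x^6)}{1-x^3},$$
an identity I would verify by running the Stieltjes/Euclidean algorithm on $f_\alpha$ and tracking the $\alpha=2$ valuation of each $\beta_k$, producing the period-3 pattern on the right-hand side.

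For the cofactor $A_n(\alpha)$ the most efficient route is to exploit the Somos 4 structure suggested by the elliptic-curve links of Sections 5--7. One expects $h_n$ to satisfy a Somos 4 recurrence $h_n h_{n-4}=X(\alpha)h_{n-1}h_{n-3}+Y(\alpha)h_{n-2}^2$ whose coefficients $X$ and $Y$ are polynomials in $\alpha$ read off from the elliptic curve attached to $f_\alpha$ via the conversion machinery of Section 6. Once that recurrence is in hand, the conjecture reduces to checking that the proposed right-hand side $A_n(\alpha)(2-\alpha)^{B_n}$ satisfies the same recurrence and matches $h_n$ on four consecutive initial values, which is a finite calculation. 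The numerator $1+(\alpha-2)x^2-(\alpha-2)x^3+(4\alpha-5)x^5-(\alpha-1)(\alpha-2)x^8$ of the $A_n$-generating function is of just the shape one would expect as the initial data of such a Somos orbit.

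The main obstacle, I expect, is the delicate pole/zero accounting at $\alpha=2$ together with the identification of the governing elliptic curve. Since some $\mu_k$ are necessarily negative, individual $\beta_k$ have simple poles at $\alpha=2$ that must cancel precisely within the Hankel product; proving that this cancellation pattern persists uniformly for all $k$ requires more than a finite check of continued-fraction terms. In parallel, producing the explicit Somos 4 coefficients $X(\alpha)$ and $Y(\alpha)$ with the correct dependence on $\alpha$ from the underlying geometry is the step where the genuine difficulty of the conjecture resides, and the step most likely to require new input beyond the Riordan-array toolbox of the paper.
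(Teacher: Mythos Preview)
The paper offers no proof of this statement: it is explicitly labelled a \emph{Conjecture}, and in the Conclusions the author notes that ``proofs of results concerning Hankel transforms and Somos sequences can be elusive \ldots\ hence we couch some proposed results as conjectures.'' So there is no paper proof to compare to; your proposal is an attempted attack on an open problem.

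Your parameter identification $(p,q,s,u,v,w,t)=(0,0,2-\alpha,1,1,3,1)$ is correct, and the observation that $\alpha$ enters only through $a_3=2-\alpha$ (whence the degeneration at $\alpha=2$) is the right starting point. The Heilermann strategy for the exponent $B_n$ is also sound in principle: writing $\mathrm{ord}_{\alpha=2}h_n=\sum_{k=1}^n(n+1-k)\mu_k$ and matching against the stated generating function for $B_n$ is exactly what one should try.

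There are, however, two genuine gaps. First, you \emph{assume} that $h_n(\alpha)$ satisfies a Somos~$4$ recurrence $h_nh_{n-4}=X(\alpha)h_{n-1}h_{n-3}+Y(\alpha)h_{n-2}^2$. Nothing in the paper establishes this for the present family; the Somos conjectures stated earlier concern different parameter choices, and the ``elliptic curve attached to $f_\alpha$'' you invoke is not constructed anywhere. Producing that curve and proving the Somos relation is precisely the hard part, not a preliminary. Second, even granting a constant-coefficient Somos~$4$ for $h_n$, your claim that verifying the closed form is ``a finite calculation'' is too quick. Since $B_n$ is a degree-two quasi-polynomial with period~$3$, the differences $B_{n-1}+B_{n-3}-B_n-B_{n-4}$ and $2B_{n-2}-B_n-B_{n-4}$ are periodic in $n$ rather than constant; dividing out $(2-\alpha)^{B_n}$ therefore yields, at best, a Somos-type recurrence for $A_n$ with period-$3$ coefficients, and you must then show that the linearly recurrent sequence $A_n$ (denominator $1+3x^3-(\alpha-2)x^6$) satisfies this nonlinear relation identically in $\alpha$. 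That is doable in principle but is a substantial algebraic verification, not a formality. In short, your outline is a plausible plan of attack, and you correctly flag the pole/zero bookkeeping at $\alpha=2$ as delicate, but the two load-bearing steps---existence of the Somos relation and the compatibility check for $A_n$---remain unproven.
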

The sequence $B_n$ begins
$$0, 1, 1, 3, 4, 7, 9, 11, 15, 18, 21, 26, 30,\ldots.$$
The quasi-polynomial sequence \seqnum{A236337} is related to this.
\begin{example} For $\alpha=2$, we find that
$$\frac{1-x-x^2-2x^3}{1-x-x^2-x^3}c\left(\frac{x^3(1-x-x^2-2x^3)}{(1-x-x^2-x^3)^2}\right)=1.$$
Thus the Hankel transform is the sequence $1,0,0,0,\ldots$.
\end{example}
\begin{example} When $\alpha=-1$, we obtain the sequence that begins
$$1, 0, 0, 3, 3, 6, 18, 33, 69, 165, 351, 768, 1758, 3921, 8811, 20130,\ldots.$$
The Hankel transform of this begins
$$1, 0, -9, 0, 0, 0, -59049, 0, 43046721, 3486784401, 0,\ldots.$$
This is given by
$$h_n=3^{B_n}[x^n]\frac{1 - 3x^2 + 3x^3 - 9x^5 - 6x^8}{1 + 3x^3 + 3x^6}.$$
\end{example}
\section{Conversion of parameters}
Given a generating function of the form
$$g(x)=\left(\frac{1+ax+bx^2+cx^3}{1+dx+ex^2+fx^3}, \frac{mx^3(1+ax+bx^2+cx^3)}{(1+dx+ex^2+fx^3)^2}\right)\cdot c(x),$$ what are the corresponding parameters $p,q,s,u,v,w,t$?
We find the following.
\begin{align*}
p&=a-d\\
q&=-ad+b+d^2-e\\
s&=a(d^2-e)+c-bd-d^3+2de-f+m\\
u&=-d\\
v&=-e\\
w&=-f+2m\\
t&=m. \end{align*}
\begin{example} The doubly aerated large Schroeder numbers, with generating function
$$\left(\frac{1}{1-x^3},\frac{x^2}{(1-x^3)^2}\right)\cdot c(x),$$ correspond to the recurrence with parameters
$(0,0,2,0,0,3,1)$. This is the recurrence
$$a_n=3 a_{n-3}+\sum_{k=1}^{n-4} a_k a_{n-k-3},$$ with $a_0=1$, $a_1=0$, $a_2=0$ and $a_3=2$.
The resulting sequence
$$1, 0, 0, 2, 0, 0, 6, 0, 0, 22, 0, 0, 90, 0, 0, 394, 0, 0,\ldots$$ has a Hankel transform which begins
$$1, 0, -4, -8, 0, 128, 512, 0, -32768, -262144, 0,\ldots.$$
Dividing this by $2^n$ gives us the sequence that begins
$$1, 0, -1, -1, 0, 4, 8, 0, -128, -512, 0,\ldots.$$
This suggests that the generating function of this Hankel transform may satisfy the functional equation
$$1-x^2-x^3f(x)-f(x/2)=0.$$
\end{example}

\section{From elliptic curve to recurrences and Somos sequences}
We consider a particular example which illustrates a process that starts with an elliptic curve. Thus we consider the elliptic curve
$$E: y^2-3xy-y=x^3-x.$$
We start by solving this for $y$.
We get two solutions,
$$y^{-} = \frac{1+3x-\sqrt{1+2x+9x^2+4x^3}}{2},$$ and
$$y^{+} = \frac{1+3x+\sqrt{1+2x+9x^2+4x^3}}{2}.$$
Expanding the right hand side of the first solution $y^{-}$, we get
$$0,1,-2, 1, 3, -7, -4, 38, -27, -175, 384\ldots.$$
Expanding the right hand side of the second solution $y^{+}$, we get
$$1, 2, 2, -1, -3, 7, 4, -38, 27, 175, -384\ldots.$$
The solution $y^{-}$ has a generating function that can be expressed as
$$\frac{x(1-x^2)}{1+3x}c\left(\frac{x(1-x)^2}{(1+3x)^2}\right).$$ Thus the sequence that begins
$$1,-2, 1, 3, -7, -4, 38, -27, -175, 384\ldots$$ has $n$-th term given by
$$\sum_{k=0}^n \left(\sum_{j=0}^{k+1} \binom{k+1}{j}\binom{n+k-2j}{n-k-2j}(-1)^j(-3)^{n-k-2j}\right)C_k.$$
We are interested in the terms (apart from sign) that are common to these two solutions. We thus truncate the sequence to start with $2, -1, -3, 7, 4, -38, 27, 175, -384\ldots$.
This leads to the new generating function
$$g(x)=\left(\frac{1+3x+\sqrt{1+2x+9x^2+4x^3}}{2}-1-2x\right)/x^2=\frac{\sqrt{1+2x+9x^2+4x^3}-x-1}{2x^2}.$$
This sequence has general term
$$\sum_{k=0}^n \sum_{j=0}^{k+1} \binom{k+1}{j}\binom{n-j}{n-2k-j}(-1)^{n-k-j}2^{k+1-j} C_k,$$ or equivalently,
$$\sum_{k=0}^{n+1} \left(\sum_{j=0}^{k+1} \binom{k+1}{j}\binom{n+1+k-2j}{n+1-k-2j}(-1)^{j+1}(-3)^{n+1-k-2j}\right)C_k.$$
Now we find that
$$g(x)=\frac{2+x}{1+x}c\left(\frac{-x^2(2+x)}{(1+x)^2}\right)=\frac{\sqrt{1+2x+9x^2+4x^3}-x-1}{2x^2}.$$
Re-writing this as
$$2\frac{1+\frac{x}{2}}{1+x}c\left(\frac{-2x^2(1+\frac{x}{2})}{(1+x)^2}\right),$$ we see that the sequence
$2, -1, -3, 7, 4, -38, 27, 175, -384\ldots$ defined by the elliptic curve is $2d_n$ where $d_n$ is the solution of the recurrence $$d_n=-d_{n-1}-4d_{n-2}-2 \sum_{k=1}^{n-3} d_k d_{n-k-2},$$ with
$$d_0=1, d_1=-\frac{1}{2}, d_2=-\frac{3}{2}.$$
The Hankel transform of this sequence $2 d_n$ begins
$$2, -7, -57, 670, 23647, -833503, -147165662,\ldots$$ which is a $(1,16)$ Somos $4$ sequence.

We now form the generating function
$$\frac{1}{1-x+x^2 g(x)}=\frac{2}{1-3x+\sqrt{1+2x+9x^2+4x^3}}.$$ This gives us a sequence whose initial term is $1$ and whose Hankel transform prepends a $1$ to the previous transform.
Multiplying this by $x$ and reverting, and then dividing by $x$, we obtain the generating function
$$\frac{1+3x-\sqrt{1+6x+9x^2-4x^3-8x^4}}{2x^3}=\frac{1+2x}{1+3x}c\left(\frac{x^2(1+2x)}{(1+3x)^2}\right).$$
This expands to give the sequence $a_n$ that begins
$$1, -1, 3, -8, 22, -59, 155, -396, 978, -2310, 5122,\ldots,$$ which is thus the solution to the recurrence
$$a_n=-3 a_{n-1}+2 a_{n-3}+\sum_{k=1}^{n-4} a_k a_{n-k-3},$$ with
$a_0=1, a_1=-1, a_2=3, a_3=-8$.
The Hankel transform of this sequence begins
$$ 1, 2, 1, -7, -16, -57, -113, 670, 3983, 23647, 140576,\ldots.$$
This is a $(1,-2)$ Somos $4$ sequence. It is \seqnum{A178622}$(n+2)$. We note that the previous Hankel transform $2, -7, -57, 670, 23647, -833503, -147165662,\ldots$ is a bisection of this latter sequence.

Taking the second binomial transform $\sum_{k=0}^n \binom{n}{k}2^{n-k}a_k$ of the sequence $a_n$, we obtain the sequence that begins
$$1, 1, 3, 6, 14, 33, 79, 194, 482, 1214, 3090, 7936, \ldots.$$
We have met this already. It is $a_n(1,3,6,1,2,2,1)$ with generating function
$$\frac{1}{1-x-2x^2}c\left(\frac{x^3}{1-x-2x^2}\right).$$
It is interesting to note that the sequence $b_n$ that begins
$$0,2,1, 1, 3, 6, 14, 33, 79, 194, 482, 1214, 3090, 7936, \ldots$$ satisfies the recurrence
$$b_n=b_{n-1}+\sum_{i=0}^{n-3}b_i b_{n-i-1},$$ with $b_0=0, b_1=2, b_2=1$. See also \seqnum{A025243}.

We have the following coordinates for the points $nP(0,0)$ on the elliptic curve
$$E: y^2-3xy-y=x^3-x.$$

\begin{center}\begin{tabular}{|c|c|c|c|c|c|c|c|}
\hline
$x(nP)$ & $0$ & $-2$ & $-\frac{1}{4}$ & $14$ & $\frac{16}{49}$ & $\frac{-399}{256}$ & $\frac{-1808}{3249}$ \\
\hline
$y(nP)$ & $0$ & $-3$ & $\frac{5}{8}$ & $78$ & $\frac{55}{343}$ & $\frac{-11921}{4096}$ & $\frac{68464}{185193}$ \\
\hline
$\frac{y}{x}$ & $1$ & $\frac{3}{2}$ & $-\frac{5}{2}$ & $\frac{39}{7}$ & $\frac{55}{122}$ & $\frac{703}{912}$ & $-\frac{4279}{6441}$ \\
\hline
\end{tabular}\end{center}

We form the continued fraction
$$\cfrac{1}{1+x-
\cfrac{2x^2}{1+\frac{3x}{2}-
\cfrac{\frac{x^2}{4}}{1-\frac{5x}{2}+
\cfrac{14x^2}{1+\frac{39x}{7}+
\cfrac{\frac{16x^2}{49}}{1+\frac{55x}{112}-
\cfrac{\frac{399x^2}{256}}{1-\ldots}}}}}}.$$

This expands to give the  sequence $\tilde{a}_n$
$$1, -1, 3, -8, 22, -59, 155, -396, 978, -2310, 5122,\ldots$$ with g.f
$$\left(\frac{1+2x}{1+3x}, \frac{x^3(1+2x)}{(1+3x)^2} \right)\cdot c(x).$$
We have
$$\tilde{a}_n=\sum_{k=0}^n \sum_{j=0}^{k+1} \binom{k+1}{j}\binom{n-k-j}{n-3k-j} 2^j (-3)^{n-3k-j}C_k.$$

In this case, we have $\tilde{a}_n = a_n$, where $a_n$ is the sequence obtained starting from solving the elliptic curve equation.

We briefly look at the shifted sequence
$$ -1, -3, 7, 4, -38, 27, 175, -384\ldots$$ of the sequence first encountered in this section.
This has a generating function given by
$$-\left(\frac{1+4x}{1+x+4x^2}, \frac{x^3(1+4x)}{(1+x+4x^2)^2}\right)\cdot C(x).$$ It has a Hankel transform that begins
$$ -1, -16, 113, 3983, -140576, -14871471, \ldots.$$
Again, this is a $(1, 16)$ Somos $4$ sequence.

The elliptic curve
$$E: y^2-3xy-y=x^3-x$$ thus gives rise to the following Riordan arrays
\begin{itemize}
\item $\left(\frac{1-x^2}{1+3x}, \frac{x(1-x^2)}{(1+3x)^2}\right)$\\
\item $\left(\frac{2+x}{1+x}, -\frac{x^2(2+x)}{(1+x)^2}\right)$\\
\item $\left(\frac{1+4x}{1+x+4x^2}, \frac{x^3(1+4x)}{(1+x+4x^2)^2}\right)$\\
\item $\left(\frac{1}{1-3x}, \frac{x(2+x^2)}{(1-3x)^2}\right)$\\
\item $\left(\frac{1+2x}{1+3x},\frac{x^3(1+2x)}{(1+3x)^2}\right)$\\
\item $\left(\frac{1}{1-x-2x^2}, \frac{x^3}{(1-x-2x^2)^2}\right)$
\end{itemize}
with their corresponding recurrences.
\section{The family $E_t: y^2+4xy+y=x^3+(t-1)x^2+tx$}
The family of elliptic curves
$$E_t: y^2+4xy+y=x^3+(t-1)x^2+tx$$ has the property that each of its curves passes through the points
$(0,0), (-1,1)$ and $(-1,2)$.
Solving the equation
$$y^2+4xy+y=x^3+(t-1)x^2+tx$$ gives
$$y=-\frac{1+4x\pm \sqrt{1+4(t+2)x+4(t+3)x^2+4x^3}}{2}.$$
For instance, $-\frac{1+4x- \sqrt{1+4(t+2)x+4(t+3)x^2+4x^3}}{2}$ expands to give
$$0, t, -(t^2+3t+1), 2t^3+10t^2+14t+5,\ldots.$$
The generating function of this sequence can be expressed as
$$\frac{x(t+x(t-1)+x^2}{1+4x}c\left(\frac{-x(t+x(t-1)+x^2)}{(1+4x)^2}\right).$$
As before, we are interested in the terms (up to sign) that are common to the two solutions. Thus we focus on the  sequence $b_n$ that begins
$$(t^2+3t+1), -(2t^3+10t^2+14t+5),\ldots.$$
This has generating function
$$g(x)=\frac{1+3t+t^2-x}{1+2(t+2)x}c\left(\frac{x^2(1+3t+t^2-x)}{(1+2(t+2)x)^2}\right).$$
We have the following conjecture.
\begin{conjecture}
The Hankel transform of $b_n$ is an
$$((2t^3+10t^2+14t+5)^2, -3t^8-40t^7-222t^6-666t^5-$$
$$\quad\quad\quad 1173t^4-1230t^3-740t^2-29(8t+1))$$ Somos $4$ sequence.
\end{conjecture}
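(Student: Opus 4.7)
The plan is to follow the template that the earlier sections have put in place for analogous conjectures: realise $b_n$ as a moment sequence of a family of orthogonal polynomials via its Riordan-array form, read off the Hankel transform from the associated Jacobi continued fraction, and then identify the resulting sequence with an elliptic divisibility sequence on $E_t$ in order to extract the Somos $4$ parameters.

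First I would rewrite the generating function $g(x)$, which already sits in the Riordan form $\bigl(G(x),\,x^2 G(x)/H(x)\bigr)\cdot c(x)$ with $G(x)=(1+3t+t^2-x)/H(x)$ and $H(x)=1+2(t+2)x$, as a Stieltjes--Jacobi continued fraction $\mathcal{J}(\alpha_0,\alpha_1,\dots;\beta_1,\beta_2,\dots)$. By the Riordan-array / orthogonal-polynomial correspondence used in Proposition~7 and its corollaries, the $\alpha_n$ stabilise to $-2(t+2)$ together with a correction coming from the numerator $1+3t+t^2-x$ of $G$, while the $\beta_k$ are determined by the power-series expansion of $g$ and by the three-term recurrence governing the associated orthogonal polynomials. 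The classical product formula $h_n=\prod_{k=1}^{n}\beta_k^{\,n+1-k}$ then provides a closed-form description of the Hankel transform of $b_n$.

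Next, I would pass from this analytic description to the curve $E_t$. The continued-fraction-to-elliptic-curve correspondence used in Section~1 and again in Section~7 (as developed in \cite{KrattL, Kratt, Layman}) interprets the data $(\alpha_i,\beta_i)$ in terms of the $x$- and $y$-coordinates of the multiples $nP$ of a distinguished rational point $P$ on $E_t$. Matching the $\alpha_i,\beta_i$ computed in the first step against such coordinates should identify the base point and exhibit $h_n$, up to sign and normalisation, as the elliptic divisibility sequence of $P$ on $E_t$. Since every elliptic divisibility sequence obeys a Somos $4$ recurrence whose coefficients are rational invariants of the pair $(E_t,P)$ (see \cite{Hone, Yura}), the conjecture reduces to computing those invariants explicitly and matching them with $(2t^3+10t^2+14t+5)^2$ and the stated degree-eight polynomial in $t$.

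The main obstacle is the identification of the correct base point and normalisation. The curve $E_t$ passes through $(0,0)$, $(-1,1)$ and $(-1,2)$ for every $t$, and it is the $y^{+}$ branch of the quadratic in $y$ that produced $b_n$ after truncation, so the relevant point and sign conventions must be pinned down before the Somos parameters can be read off. Once $P$ is fixed, the final verification is an essentially mechanical computation of $2P,3P,4P$ on $E_t$ followed by checking finitely many polynomial identities in $t$; the conceptually delicate step, and the place where the Kratt--Layman bridge together with \cite{Hone, Yura} is doing the real work, is establishing that the Hankel transform really does satisfy \emph{some} Somos $4$ recurrence rather than merely agreeing with one on the initial terms.
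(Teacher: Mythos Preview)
The paper offers no proof of this statement: it is explicitly labelled a conjecture, and the Conclusions section says outright that ``proofs of results concerning Hankel transforms and Somos sequences can be elusive \ldots\ hence we couch some proposed results as conjectures.'' So there is nothing in the paper to compare your argument against; what you have written is a proof \emph{strategy} for an open statement.

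As a strategy it has real gaps. First, the Riordan-array/orthogonal-polynomial correspondence you invoke (the one behind Proposition~7 and the Appendix propositions) applies to arrays of the shape $\bigl(\frac{1-\lambda x-\mu x^2}{1+rx+sx^2},\frac{x}{1+rx+sx^2}\bigr)$, i.e.\ with $f(x)=x+\cdots$. The array attached to $b_n$ is \emph{stretched}: its second component is $x^2(1+3t+t^2-x)/(1+2(t+2)x)^2$, so $f_1=0$ and the coefficient matrix is not the coefficient array of a classical orthogonal family. You therefore cannot simply ``read off'' a Jacobi continued fraction with stabilising $\alpha_n$ from the Riordan form; obtaining the $\alpha_n,\beta_n$ for $g(x)$ is exactly the hard part, and nothing in the paper suggests they admit closed forms. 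Second, the sequence $b_n$ is not the moment sequence $\tilde a_n$ built from the $x(nP),y(nP)$ data; it is the truncated common part of the two $y$-branches. In the worked example of Section~7 the Hankel transform of the analogous truncated sequence was a \emph{bisection} of the Somos sequence attached to $\tilde a_n$, with different Somos parameters $((1,16)$ versus $(1,-2))$. So even if your elliptic-divisibility identification succeeds for $\tilde a_n$, transferring it to $b_n$ requires an extra step you have not described. Third, the references are off: the continued-fraction-to-elliptic-curve bridge is in \cite{Hone, Swart, Poorten}, not \cite{KrattL, Kratt, Layman}. Finally, your last paragraph concedes that the ``conceptually delicate step'' is still open; that is precisely why the paper calls this a conjecture.
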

The generating function $\frac{1}{1-x-x^2g(x)}$ will now give us a sequence whose initial term is $1$, which has the same Hankel transform, with a prepended $1$. We obtain that
$$\frac{1}{1-x-x^2g(x)}=\frac{1}{1-2(t+3)x}c\left(\frac{-x(x^2-(t+2)(t+3)x+2t+5)}{(1-2(t+3)x)^2}\right).$$
We now revert the generating function $\frac{x}{1-x-x^2g(x)}$, and divide the result by $x$, to get the generating function
$$a(x)=\frac{1+(2t+5)x}{1+2(t+3)x+(t+2)(t+3)x^2}c\left(\frac{x^3(1+(2t+5)x)}{(1+2(t+3)x+(t+2)(t+3)x^2)^2}\right).$$
This expands to give a sequence $a_n$ that begins
$$1, -1, - t(t + 3), 2t^3 + 13t^2 + 23t + 7, - 3t^4 - 30t^3 - 103t^2 - 134t - 44,\ldots.$$
The sequence $a_n$ therefore satisfies the following recurrence.
$$a_n = - 2·(t + 3) a_{n-1}- (t + 2)·(t + 3)a_{n-2}+2 a_{n-3}+ \sum_{k=1}^{n-k-4} a_k a_{n-k-3},$$
with $a_0=1, a_1=-1, a_2=- t(t + 3), a_3=2t^3 + 13t^2 + 23t + 7$.
We have the following conjecture concerning the Hankel transform of this sequence.
\begin{conjecture} The Hankel transform of the sequence whose generating function is given by
$$\frac{1+(2t+5)x}{1+2(t+3)x+(t+2)(t+3)x^2}c\left(\frac{x^3(1+(2t+5)x)}{(1+2(t+3)x+(t+2)(t+3)x^2)^2}\right)$$
is a $(1, t^2+3t+1)$ Somos $4$ sequence.
\end{conjecture}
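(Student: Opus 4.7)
The plan is to derive the Jacobi continued fraction representation of $a(x)$ and then combine the classical Hankel-determinant formula with the elliptic-curve/Somos correspondence, in the spirit of the motivating example.

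First I would convert the Riordan-array generating function $a(x)$ into Jacobi form $\mathcal{J}(\alpha_0,\alpha_1,\ldots;\beta_1,\beta_2,\ldots)$. Applying the Stieltjes-Rutishauser (qd) algorithm to the explicit algebraic $a(x)$ given in the conjecture produces $\alpha_n=\alpha_n(t)$ and $\beta_n=\beta_n(t)$ as rational functions of $t$. Since $a(x)$ was built from $E_t$ through exactly the reversion construction of Section 7, I would expect the coefficients to satisfy $\alpha_n=y(nP)/x(nP)$ and $\beta_n=x(nP)$ for a distinguished point $P$ on $E_t$ (the candidates being $(0,0)$, $(-1,1)$, $(-1,2)$). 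Identifying the correct $P$ is a finite check by comparing the first few $\beta_k$ output by the qd algorithm with the first few $x(kP)$ obtained from the addition law on $E_t$.

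Second, I would invoke the standard determinantal identity for Hankel transforms of J-fraction moments,
\[
h_n = \prod_{k=1}^{n} \beta_k^{\,n-k+1},
\]
which, after the substitution $\beta_k=x(kP)$, expresses $h_n$ as a monomial in the $x$-coordinates of multiples of $P$ on $E_t$. By Ward's theorem on elliptic divisibility sequences and Hone's tau-function formalism \cite{Hone}, any such product over consecutive multiples on a Weierstrass cubic satisfies a Somos $4$ recurrence whose parameters $(\alpha_S,\beta_S)$ are universal expressions in the standard invariants $b_2,b_4,b_6,b_8$ of the curve. Computing these invariants for $E_t$ and substituting into those universal expressions should yield $(\alpha_S,\beta_S)=(1,t^2+3t+1)$; if the resulting polynomial identity in $t$ is of bounded degree, then verification at enough specific values of $t$ (cross-checked against the first several terms of $h_n$ obtained directly) suffices.

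The main obstacle I anticipate is the first step, namely the rigorous identification $\beta_k=x(kP)$ for the whole family $E_t$ at once. The motivating example exhibited this correspondence in a single case but did not establish it for a general construction going from the curve equation to $a(x)$ via solving for $y^{\pm}$, the INVERT-type transform $1/(1-x-x^2g(x))$, and compositional reversion. A clean proof likely requires tracing how each of these three transformations interacts with the Rutishauser quotients, ultimately lifting the elliptic addition law to the level of generating functions. Once that correspondence is in hand, the Somos $4$ conclusion and the explicit parameters follow more routinely from Hone's framework and a direct invariant calculation on $E_t$.
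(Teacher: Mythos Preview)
The statement you are addressing is labeled a \emph{conjecture} in the paper, and the paper offers no proof of it; the author explicitly writes in the Conclusions that ``Proofs of results concerning Hankel transforms and Somos sequences can be elusive \ldots\ hence we couch some proposed results as conjectures.'' So there is no paper proof to compare against, and what you have written is a proposed line of attack on an open problem rather than a reconstruction.

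Your plan is a sensible one, and the obstacle you flag is indeed the crux. Two remarks sharpen what would actually need to be done.

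First, be careful about which sequence you run the qd algorithm on. In the paper's framework (Section~9) the sequence $a_n$ whose generating function appears in the conjecture is generally \emph{not} the sequence $\tilde a_n$ whose J-fraction has $\beta_n=-x(nP)$ and $\alpha_n=-y(nP)/x(nP)$; for the family $E_t$ one has $c=t$, and $a_n$ and $\tilde a_n$ differ by a $(t+1)$-st binomial transform composed with an INVERT$(-(t+1))$. However, those transforms only shift the $\alpha_n$ and leave the $\beta_n$ unchanged, so the Heilermann product $h_n=\prod_{k}\beta_k^{\,n+1-k}$ is the same for both. Thus your hoped-for identification $\beta_k=-x(kP)$ for $a(x)$ is consistent, but you may find it cleaner to work with $\tilde a(x)$ directly.

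Second, once $\beta_k=-x(kP)$ is in hand, the Somos-$4$ conclusion is quite concrete and does not require the full generality of Hone's tau formalism: substituting $x(kP)=-\psi_{k-1}\psi_{k+1}/\psi_k^2$ into the Heilermann product and telescoping gives $h_n$ as a sign times $\psi_{n+2}(0,0)$, and the division polynomials of a Weierstrass curve form an elliptic divisibility sequence, hence satisfy a Somos-$4$ recurrence. The specific parameters $(1,\,t^2+3t+1)$ then follow from the standard expression of the Somos coefficients in terms of the curve invariants at $P=(0,0)$, which is a polynomial identity in $t$ of bounded degree.

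So the genuine gap is exactly the one you name: establishing rigorously, for all $n$ and all $t$, that the J-fraction coefficients $\beta_n$ of $\tilde a(x)$ coincide with $-x(nP)$ on $E_t$. The paper verifies this phenomenon in worked examples but never proves it, and a proof would presumably require an argument in the style of van der Poorten or Swart relating the continued-fraction expansion of the square root $\sqrt{1+4(t+2)x+4(t+3)x^2+4x^3}$ to the addition law on $E_t$. Without that step your outline remains a heuristic, matching the paper's own epistemic status for this statement.
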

\begin{example} When $t=0$, we get the sequence with generating function
$$\frac{1+5x}{1+6x+6x^2}c\left(\frac{x^3(1+5x)}{(1+6x+6x^2)^2}\right),$$ which begins
$$1, -1, 0, 7, -44, 223, -1060, 4920, -22626, 103719, -475214,\ldots.$$
This sequence satisfies then the recurrence
$$a_n=-6a_{n-1}-6 a_{n-3}+2 a_{n-3}+\sum_{k=1}^{n-4} a_k a_{n-k-3}$$ with
$$a_0=1, a_1=-1, a_2=0, a_3=7.$$
The Hankel transform of this sequence is then the $(1,1)$ Somos $4$ sequence \seqnum{A157101} which begins
$$1, -1, -5, -4, 29, 129, -65, -3689, -16264,\ldots.$$
The associated elliptic curve is
$$E_0 : y^2+4xy+y=x^3-x^2.$$
\end{example}
\begin{example} When $t=-3$, we obtain the sequence $a_n$ with generating function
$$ (1-x)c(x^3(1-x))=\frac{1-\sqrt{1-4x^3+4x^4}}{2x^3}$$ which begins
$$1, -1, 0, 1, -2, 1, 2, -6, 6, 3, -20,\ldots.$$
This sequence then satisfies  the recurrence
$$a_n=2 a_{n-3}+\sum_{k=1}^{n-4} a_k a_{n-k-3}$$ with
$$a_0=1, a_1=-1, a_2=0, a_3=1.$$
Its Hankel transform is the $(1,1)$ Somos $4$ sequence \seqnum{A006769}$(n+2)$ that begins
$$1, -1, 1, 2, -1, -3, -5, 7, -4, -23, 29,\ldots.$$
The associated elliptic curve is
$$E_{(-3)}: y^2+4xy+y=x^3-4x^2-3x.$$
We can relate the sequence to the coordinates of the multiples of the point $(0,0)$ on this curve in the following way. Taking the second binomial transform of $(-1)^n a_n$, given by
$$d_n=\sum_{k=0}^n 2^{n-k}(-1)^k a_k,$$ and then taking the INVERT$(4)$ transform of $d_n$, we arrive at the sequence $\tilde{a}_n$ that begins
$$1, -1, 0, -1, -2, -5, -10, -14, 6, 145, 720, 2618, 7850, 19389, 35016, \ldots.$$
Due to the invariance of the Hankel transform under binomial and INVERT transforms, this sequence has the same Somos $4$ Hankel transform as $a_n$.
The sequence $\tilde{a}_n$ has a generating function that can be expressed as
$$\frac{1-x}{1+4x-4x^2}c\left(\frac{x(4-4x-x^2-x^3)}{(1+4x-4x^2)^2}\right).$$
We can express this as the continued fraction
$$\cfrac{1}{1+x+
\cfrac{x^2}{1-2x-
\cfrac{x^2}{1-x+
\cfrac{2x^2}{1-\frac{7x}{2}+
\cfrac{\frac{x^2}{4}}{1-\frac{9x}{2}+
\cfrac{6x^2}{1-\ldots}}}}}}.$$
This corresponds to the following $x$ and $y$ coordinates of the multiples of $P(0,0)$ on $E_{(-3)}$.
\begin{center}\begin{tabular}{|c|c|c|c|c|c|c|c|}
\hline
$x(nP)$ & $0$ & $1$ & $-1$ & $2$ & $\frac{1}{4}$ & $6$ & $\frac{-5}{9}$ \\
\hline
$y(nP)$ & $0$ & $-2$ & $1$ & $-7$ & $\frac{-9}{8}$ & $2$ & $\frac{38}{27}$ \\
\hline
$\frac{y}{x}$ & $1$ & $\-2$ & $-1$ & $\frac{-7}{2}$ & $\frac{-9}{2}$ & $\frac{1}{3}$ & $-\frac{38}{15}$ \\
\hline
\end{tabular}\end{center}
Thus we can write the generating function $\sum_{n=0}^{\infty} \tilde{a}_n t^n$ as
$$\cfrac{1}{1+z(0)t+
\cfrac{x(1)t^2}{1+z(1)t+
\cfrac{x(2)t^2}{1+z(2)t+\cdots}}},$$ where
$z(n)=\frac{y(nP)}{x(nP)}$ and $x(n)=x(nP)$.
The sequence $\tilde{a}_n$ is the moment sequence for the family of orthogonal polynomials $Q_n(x)$ with a coefficient matrix that begins
$$\left(
\begin{array}{ccccccc}
 1 & 0 & 0 & 0 & 0 & 0 & 0 \\
 1 & 1 & 0 & 0 & 0 & 0 & 0 \\
 -1 & -1 & 1 & 0 & 0 & 0 & 0 \\
 0 & -1 & -2 & 1 & 0 & 0 & 0 \\
 -2 & 3/2 & 8 & -11/2 & 1 & 0 & 0 \\
 9 & -9 & -35 & 35 & -10 & 1 & 0 \\
 -9 & 15 & 82/3 & -57 & 107/3 & -29/3 & 1 \\
\end{array}
\right).$$
The inverse $M$ of this matrix (the moment matrix) contains $\tilde{a}_n$ as its first column. The matrix $M$ begins
$$\left(
\begin{array}{ccccccc}
 1 & 0 & 0 & 0 & 0 & 0 & 0 \\
 -1 & 1 & 0 & 0 & 0 & 0 & 0 \\
 0 & 1 & 1 & 0 & 0 & 0 & 0 \\
 -1 & 3 & 2 & 1 & 0 & 0 & 0 \\
 -2 & 7 & 3 & 11/2 & 1 & 0 & 0 \\
 -5 & 15 & -1 & 22 & 10 & 1 & 0 \\
 -10 & 24 & -30 & 147/2 & 61 & 29/3 & 1 \\
\end{array}
\right).$$
The production matrix of $M$ then begins
$$\left(
\begin{array}{ccccccc}
 -1 & 1 & 0 & 0 & 0 & 0 & 0 \\
 -1 & 2 & 1 & 0 & 0 & 0 & 0 \\
 0 & 1 & 1 & 1 & 0 & 0 & 0 \\
 0 & 0 & -2 & 7/2 & 1 & 0 & 0 \\
 0 & 0 & 0 & -1/4 & 9/2 & 1 & 0 \\
 0 & 0 & 0 & 0 & -6 & -1/3 & 1 \\
 0 & 0 & 0 & 0 & 0 & 5/9 & 38/15 \\
\end{array}
\right).$$
The tri-diagonal nature of this production matrix shows that we are in the presence of a family of orthogonal polynomials. The diagonal and sub-diagonal contain the $x$ coordinates and the $y/x$-ratios of the multiples of the point $P(0,0)$ on $E_{(-3)}$.

The family $Q_n(t)$  is defined by the three term recurrence
$$Q_n(t)=\left(t+\frac{x((n-1)P)}{y((n-1)P)}\right)Q_{n-1}(t)+x((n-1)P)Q_{n-2}(t),$$ where $P=(0,0)$ on $E_{(-3)}$ and we have $Q_0(t)=1, Q_1(t)=t+1$.

Using results about the divisibility polynomials $\psi_n$ of an elliptic curve \cite{Swart}, we can express the $x$ and $y$ coordinates of multiples of the point $P(0,0)$ for the curve $E_{(-3)}$ as follows.
$$x(nP)=\frac{-\psi_{n-1} \psi_{n+1}}{\psi_n^2},$$ and
$$y(nP)=\frac{1}{2}\left(\frac{\psi_{2n}}{\psi_n^4}-\left(1-4 \frac{\psi_{n-1} \psi_{n+1}}{\psi_n^2}\right)\right),$$
where we have used the shorthand $\psi_n=\psi_n(0,0)$ in the above expressions. Alternatively, we have
$$x((n+1)P)=\frac{-\tilde{h}_{n-1}\tilde{h}_{n+1}}{\tilde{h}_n^2},$$ and
$$y((n+1)P)=\frac{1}{2}\left(\frac{\tilde{h}_{2(n+1)}}{\tilde{h}_n^4}+a_1 \frac{\tilde{h}_{n-1}\tilde{h}_{n+1}}{\tilde{h}_n^2}-a_3\right),$$ where
$a_1=4$ and $a_3=1$. Here, $\tilde{h}_n$ is the Hankel transform of $\tilde{a}_n$.

We note finally that an application of the Stieltjes-Perron transform suggests that the absolutely continuous part of the associated measure is given by
$$\frac{1}{\pi}\frac{\sqrt{-x^4-4(2x^3+6x^2+7x+3)}}{2(4x^2-1)}.$$
\end{example}

\section{The case of $E : y^2+ a x y+ y=x^3+ b x^2 + cx$}
We let $P=P(0,0)$ be the point $(0,0)$ on the elliptic curve
$$E : y^2+ a x y+ y=x^3+ b x^2 + cx,$$ and let $\tilde{a}_n$ be the sequence with generating function
$$\tilde{a}(t)=\cfrac{1}{1+t+
\cfrac{x_1t^2}{1+\frac{y_1}{x_1}t+
\cfrac{x_2t^2}{1+\frac{y_2}{x_2}t+
\cfrac{x_3t^2}{1+\frac{y_3}{x_3}t+\cdots}}}},$$
where $x_n=x(nP)$ and $y_n=y(nP)$ are the coordinates of the multiples $nP$ of the point $P$ on the curve.
We can conjecture that the generating function of $\tilde{a}_n$ is given by
$$(g(x), f(x))\cdot c(x),$$
where
$$g(x)=\frac{1}{1-(a+2 \gamma)x-(b-2 \gamma(a+c))x^2},$$
and
\begin{scriptsize}
$$f(x)=\frac{x(x^3(a + c)(\gamma^2(a + c) - b\gamma + 1) - x^2(2c\gamma^2 + \gamma(a^2 + a(3·c - 2) - b) + 1) + x \gamma(2a + 2c - 1) - \gamma)}{(1-(a+2 \gamma)x-(b-2 \gamma(a+c))x^2)^2}.$$
\end{scriptsize}
Here, we have used the notation
$$\gamma = c-1.$$
We see that there are simplifications when $c=1$. In this case, we find that the generating function of $\tilde{a}_n$ is given by
$$\frac{1-(a+1)x}{1-ax-bx^2}c\left(\frac{-x^3(1-(a+1)x)}{(1-ax-bx^2)^2}\right).$$
This allows us to give a closed form expression for $\tilde{a}_n$ when $c=1$.
\begin{scriptsize}
$$\tilde{a}_n=\sum_{k=0}^n (\sum_{j=0}^{k+1} \binom{k+1}{j}(-a-1)^j \sum_{i=0}^{n-3k-j} \binom{2k+i}{i}\binom{i}{n-3k-i-j}b^{n-3k-i-j}a^{2i+3k+j-n})(-1)^k C_k.$$
\end{scriptsize}
This sequence begins
$$1, -1, b - a, - a^2 + ab - b - 1, - a^3 + a^2 b - a(2b + 1) + b^2 + 2,$$
$$\quad - a^4 + a^3b - a^2(3b + 1) + a(2b^2 + 4) - b^2 - 3b - 1,\ldots,$$
with a Hankel transform that begins
$$1, -a + b - 1, - a^2 + a(b - 3) + 2b - 3, $$
$$\quad a^3 + a^2(2 - 3b) + a(3b^2 - 5b) - b^3 + 3b^2 - b - 2,\ldots,$$
which we conjecture to be a $(1, a-b+1)$ Somos $4$ sequence.

The sequence $\tilde{a}_n$ for $c=1$ satisfies the convolution recurrence
$$\tilde{a}_n= a \tilde{a}_{n-1}+ b \tilde{a}_{n-2}-2 \tilde{a}_{n-3} - \sum_{k=1}^{n-k-4} \tilde{a}_k \tilde{a}_{n-k-3},$$ with
$\tilde{a}_0=1, \tilde{a}_1=-1, \tilde{a}_2=b-a, \tilde{a}_3=-a^2+ab-b-1$.

For the related sequence $a_n$, calculated from the solution of the elliptic curve equation, we can calculate that its generating function is given by
$$(g(x), f(x)) \cdot c(x)$$ where
$$g(x)=\frac{1+(1+a+2c)x}{1+(a+2c+2)x+(a(c+1)+(c+1)^2-b)x^2},$$
$$f(x)=\frac{x^3(1+(1+a+2c)x)}{(1+(a+2c+2)x+(a(c+1)+(c+1)^2-b)x^2)^2}.$$
This means that $a_n$ satisfies the following convolution recurrence.
\begin{scriptsize}
$$a_n=-(a+2(c+1))a_{n-1}+(b-a(c+1)-(c+1)^2)a_{n-2}+2 a_{n-3}+\sum_{k=1}^{n-4} a_k a_{n-k-3},$$
\end{scriptsize}
with $a_0=1, a_1=-1, a_2=1+b-ac-c^2, a_3=(3+a+2c)(-b+c(a+c))$.

The relationship between the generating function $\tilde{a}(x)=\sum_{n=0}^{\infty} \tilde{a}_n x^n$ and the generating function $a(x)=\sum_{n=0}^{\infty} a_n x^n$ is the following: $\tilde{a}(x)$ is the INVERT$(-(c+1))$ of the $(c+1)$-st binomial transform of $a(-x)$.

\section{Conclusions}
We have found that the language of Riordan arrays is an appropriate one to investigate structural aspects of the solutions of generalized Catalan or Schroeder recurrences. The appearance of the Catalan numbers is explained by the quadratic nature of the convolution recurrences. This quadratic theme is continued in terms of solving elliptic curve equations in the quadratic term, which leads to generating functions that are Riordan array solutions of convolution equations. Often, the Hankel transform of the solution sequences give rise to Somos $4$ sequences. By using the $x$ and $y$ coordinates of multiples of a special point on the elliptic curves, we can obtain Stieltjes continued fraction expressions for the generating functions of related integer sequences. In specific instances, this leads to orthogonal polynomials and their three term recurrences, opening the door to the investigation of associated measures. Note that we work with points on curves for which the division polynomials never evaluate to $0$.

The association of Somos $4$ sequences and the Hankel transform is implicit in previous work (see the Appendix) \cite{Hone, Poorten, Swart}. What is apparently new is the use of the $\frac{y(nP)}{x(nP)}$ ratios to define \emph{integer} sequences whose Hankel transforms furnish Somos sequences.

Proofs of results concerning Hankel transforms and Somos sequences can be elusive (but see \cite{Hone, KrattL, Kratt, Xin, Yura}, hence we couch some proposed results as conjectures. It is hoped that further insight will remedy this in the future.

\section{Appendix: Orthogonal polynomials, Riordan arrays and the Hankel transform}
By an
\emph{orthogonal polynomial sequence} $(p_n(x))_{n \ge 0}$ we shall
understand \cite{Chihara, Gautschi} an infinite sequence of
polynomials
$p_n(x)$, $n\ge 0$, of degree $n$, with real coefficients (often integer
coefficients) that are mutually orthogonal on an interval
$[x_0,x_1]$ (where
$x_0=-\infty$ is allowed, as well as $x_1=\infty$), with
respect to a weight function $w:[x_0,x_1] \to \mathbb{R}$ \,:
$$\int_{x_0}^{x_1}
p_n(x)p_m(x)w(x)dx = \delta_{nm}\sqrt{h_nh_m},$$ where
$$\int_{x_0}^{x_1} p_n^2(x)w(x)dx=h_n.$$ We assume that $w$ is strictly positive on the interval $(x_0,x_1)$.  Every such
sequence
obeys a so-called
``three-term recurrence" \,: $$
p_{n+1}(x)=(a_nx+b_n)p_n(x)-c_np_{n-1}(x)$$ for coefficients
$a_n$, $b_n$ and $c_n$ that depend on $n$ but
not $x$. We note that if $$p_j(x)=k_jx^j+k'_jx^{j-1}+\ldots
\qquad j=0,1,\ldots$$ then $$a_n=\frac{k_{n+1}}{k_n},\qquad
b_n=a_n\left(\frac{k'_{n+1}}{k_{n+1}}-\frac{k'_n}{k_n}\right),
\qquad c_n=a_n\left(\frac{k_{n-1}h_n}{k_nh_{n-1}}\right),$$
where
$$h_i=\int_{x_0}^{x_1} p_i(x)^2 w(x)\,dx.$$
Since the degree
of $p_n(x)$ is $n$, the coefficient array of the polynomials
is
a lower triangular (infinite) matrix. In the case of monic
orthogonal
polynomials the diagonal elements of this array will all be
$1$. In this case, we can write the three-term recurrence as
$$p_{n+1}(x)=(x-\alpha_n)p_n(x)-\beta_n p_{n-1}(x), \qquad
p_0(x)=1,\qquad p_1(x)=x-\alpha_0.$$

\noindent The \emph{moments} associated to the orthogonal
polynomial sequence are the numbers $$\mu_n=\int_{x_0}^{x_1}
x^n w(x)dx.$$

\noindent We can find $p_n(x)$, $\alpha_n$ and $\beta_n$ from a
knowledge
of these moments. To do this, we let $\Delta_n$ be the Hankel
determinant
$|\mu_{i+j}|_{i,j\ge 0}^n$ and $\Delta_{n,x}$ be the same
determinant, but with the last row equal to $1,x,x^2,\ldots$.
Then
$$p_n(x)=\frac{\Delta_{n,x}}{\Delta_{n-1}}.$$ More generally,
we let $H\left(\begin{array}{ccc} u_1&\ldots& u_k\\
v_1 & \ldots & v_k\end{array}\right)$ be the determinant
of
Hankel type with $(i,j)$-th term $\mu_{u_i+v_j}$. Let
$$\Delta_n=H\left(\begin{array}{cccc} 0&1&\ldots&n\\
0&1&\ldots&n\end{array}\right),\qquad
\Delta'=H\left(\begin{array}{ccccc}
0&1&\ldots&n-1&n\\ 0&1&\ldots&n-1&n+1\end{array}\right).$$
Then
we have
$$\alpha_n=\frac{\Delta'_n}{\Delta_n}-\frac{\Delta'_{n-1}}{\Delta_{n-1}},\qquad
\beta_n=\frac{\Delta_{n-2} \Delta_n}{\Delta_{n-1}^2}.$$

Of importance to this study are the following results (the first is the well-known ``Favard's Theorem''), which we essentially reproduce from
\cite{Kratt}.
\begin{theorem} \cite{Kratt} (Cf. \cite{Viennot}, Th\'eor\`eme $9$ on p.I-4, or \cite{Wall}, Theorem $50.1$). Let $(p_n(x))_{n\ge 0}$ be a sequence of monic polynomials, the polynomial $p_n(x)$ having degree $n=0,1,\ldots$ Then the sequence $(p_n(x))$ is (formally) orthogonal if and only if there exist sequences $(\alpha_n)_{n\ge 0}$ and $(\beta_n)_{n\ge 1}$ with $\beta_n \neq 0$ for all $n\ge 1$, such that the three-term recurrence
$$p_{n+1}=(x-\alpha_n)p_n(x)-\beta_n (x), \quad \text{for}\quad n\ge 1, $$
holds, with initial conditions $p_0(x)=1$ and $p_1(x)=x-\alpha_0$.
\end{theorem}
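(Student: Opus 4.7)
The plan is to prove both implications of the biconditional via the orthogonalizing linear functional. For the forward direction, the idea is that since $p_n$ is monic of degree $n$, the polynomial $xp_n$ has degree $n+1$ and admits a unique expansion
$$xp_n(x) = p_{n+1}(x) + \sum_{k=0}^{n} c_{n,k}\,p_k(x).$$
Pairing with $p_j$ under the orthogonalizing functional $\mathcal{L}$ and using the trivial identity $\mathcal{L}(xp_n \cdot p_j) = \mathcal{L}(p_n \cdot xp_j)$ together with $\deg(xp_j) \le n-1$, I would invoke orthogonality to force $c_{n,j} = 0$ for every $j \le n-2$. Identifying $\alpha_n := c_{n,n}$ and $\beta_n := c_{n,n-1}$ then delivers the three-term recurrence. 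A short calculation, observing that $xp_{n-1} = p_n + (\text{polynomial of degree} \le n-1)$ and invoking orthogonality, yields $\beta_n = \mathcal{L}(p_n^2)/\mathcal{L}(p_{n-1}^2) \neq 0$.

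The converse requires constructing $\mathcal{L}$ from scratch. Since the degrees of $(p_n)_{n\geq 0}$ are $0,1,2,\ldots$, this family is a basis of the polynomial ring, so $\mathcal{L}$ is unambiguously defined by the declarations $\mathcal{L}(p_0) = 1$ and $\mathcal{L}(p_n) = 0$ for $n \ge 1$, extended linearly. The heart of the proof is the lemma that $\mathcal{L}(x^k p_n) = 0$ whenever $0 \le k < n$, which I would prove by induction on $k$ uniformly in $n$. The base case $k = 0$ is the definition, and for the inductive step I would use the hypothesized recurrence to rewrite
$$x^k p_n = x^{k-1} p_{n+1} + \alpha_n\, x^{k-1} p_n + \beta_n\, x^{k-1} p_{n-1},$$
observing that $k-1$ is strictly less than each of $n+1$, $n$, and $n-1$ by virtue of $k \le n-1$, so all three terms vanish by the inductive hypothesis. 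Expanding $p_m$ in the monomial basis then gives $\mathcal{L}(p_m p_n) = 0$ for $m \neq n$.

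For the nondegeneracy $\mathcal{L}(p_n^2) \neq 0$ I would derive the telescoping identity $\mathcal{L}(p_n^2) = \beta_n\,\mathcal{L}(p_{n-1}^2)$ by substituting $p_n = xp_{n-1} - \alpha_{n-1}p_{n-1} - \beta_{n-1}p_{n-2}$ into one factor of $p_n^2$ so that the just-established orthogonality kills two of the three resulting terms, then re-expanding $xp_n$ via the recurrence against the surviving factor $p_{n-1}$ and applying orthogonality once more. Iterating yields $\mathcal{L}(p_n^2) = \prod_{k=1}^n \beta_k$, which is nonzero precisely because every $\beta_k \neq 0$; this is where the hypothesis on the $\beta_n$'s is used essentially. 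The principal technical obstacle is the bookkeeping in the uniform induction on $k$: because the inductive step at index $n$ calls on the claim at index $n+1$, the induction must be phrased uniformly over all $n > k$ from the outset rather than stratified by $n$.
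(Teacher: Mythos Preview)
Your proof is correct and follows the standard argument for Favard's theorem. Note, however, that the paper does not supply its own proof of this statement: it is quoted in the Appendix as a known result, with citations to Krattenthaler, Viennot, and Wall, and no argument is given. So there is no ``paper's proof'' to compare against beyond those references, and your argument is precisely the classical one found there: expand $xp_n$ in the basis $(p_k)$ and use the self-adjointness $\mathcal{L}(xp_n\cdot p_j)=\mathcal{L}(p_n\cdot xp_j)$ to kill all but two coefficients for the forward direction; for the converse, define $\mathcal{L}$ on the basis $(p_n)$, prove $\mathcal{L}(x^kp_n)=0$ for $k<n$ by induction on $k$ uniformly in $n$, and derive $\mathcal{L}(p_n^2)=\prod_{k=1}^n\beta_k$ by telescoping. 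Your remark about phrasing the induction uniformly over all $n>k$ (since the step at $n$ invokes the hypothesis at $n+1$) is exactly the point that needs care, and you have it right.
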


\begin{theorem} \cite{Kratt} (Cf. \cite{Viennot}, Proposition 1, (7), on p. V-$5$, or \cite{Wall}, Theorem $51.1$). Let $(p_n(x))_{n\ge 0}$ be a sequence of monic polynomials, which is orthogonal with respect to some functional L. Let
$$p_{n+1}=(x-\alpha_n)p_n(x)-\beta_n (x), \quad \text{for}\quad n\ge 1, $$ be the corresponding three-term recurrence which is guaranteed by Favard's theorem. Then the generating function
$$g(x)=\sum_{k=0}^{\infty} \mu_k x^k $$ for the moments $\mu_k=L(x^k)$ satisfies
$$g(x)=\cfrac{\mu_0}{1-\alpha_0 x-
\cfrac{\beta_1 x^2}{1-\alpha_1 x -
\cfrac{\beta_2 x^2}{1-\alpha_2 x -
\cfrac{\beta_3 x^2}{1-\alpha_3 x -\cdots}}}}.$$
\end{theorem}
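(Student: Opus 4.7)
The plan is to follow the classical Viennot--Wall route: introduce numerator polynomials of the second kind, identify the convergents of the continued fraction with reciprocated rational functions built from them, and close by an orthogonality-driven Pad\'e-type vanishing estimate. I would work throughout in the formal power series ring $\mathbb{R}[[x]]$, so that every denominator---each having constant term $1$---is invertible and no analytic convergence question arises.

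First I would introduce auxiliary polynomials $q_n(x)$ satisfying the same three-term recurrence as the $p_n$ but with second-kind initial data $q_0=0$, $q_1(x)=\mu_0$, producing a linearly independent solution with $\deg q_n=n-1$; equivalently $q_n(y)=L_z\bigl((p_n(y)-p_n(z))/(y-z)\bigr)$, which ties $q_n$ directly to the moments. Next I would compute the convergents $A_n(x)/B_n(x)$ of the right-hand side continued fraction via the standard linear recurrence for convergents,
\[
A_n(x) = (1-\alpha_{n-1}x)A_{n-1}(x) - \beta_{n-1}x^2 A_{n-2}(x),
\]
and the same for $B_n(x)$, with initial data $A_0=\mu_0$, $B_0=1$, $A_1=\mu_0$, $B_1=1-\alpha_0 x$. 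After the reciprocal substitution $y=1/x$ followed by multiplication by $x^n$, these recurrences become exactly the three-term recurrence of the orthogonal polynomials, giving identifications of the form
\[
B_n(x) = x^n p_n(1/x), \qquad A_n(x) = x^n\,\widetilde{q}_n(1/x),
\]
for a suitable second-kind polynomial $\widetilde{q}_n$ matching $q_n$ up to reindexing. Since $p_n$ is monic, $B_n(0)=1$, so $A_n/B_n$ is a well-defined element of $\mathbb{R}[[x]]$.

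The crux of the argument is the Pad\'e-type vanishing estimate
\[
g(x)B_n(x) - A_n(x) = O(x^{2n}) \quad\text{in}\ \mathbb{R}[[x]].
\]
Expanding $B_n(x)=\sum_{k=0}^n p_{n,n-k}\,x^k$ with $p_{n,j}$ the coefficient of $y^j$ in $p_n(y)$, one computes $[x^m]\bigl(g(x)B_n(x)\bigr) = L\bigl(y^{m-n}p_n(y)\bigr)$ for $m\ge n$, which vanishes for $n\le m<2n$ by the orthogonality of $p_n$ against polynomials of degree less than $n$; since also $\deg A_n\le n-1$, the coefficient of $x^m$ in $g(x)B_n(x)-A_n(x)$ vanishes throughout this range. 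The low-order range $0\le m<n$ is dispatched by the construction of $q_n$: its initial data are engineered so that $A_n$ coincides with the unique polynomial of degree $<n$ whose first $n$ coefficients match those of $g(x)B_n(x)$; verifying this is an induction on $n$ using the Casoratian identity $p_n q_{n-1}-p_{n-1}q_n = -\mu_0\beta_1\beta_2\cdots\beta_{n-1}$, which is forced by the shared three-term recurrence. Dividing by $B_n(x)$ gives $g(x)-A_n(x)/B_n(x)=O(x^{2n})$, so the convergents of the $\mathcal{J}$-fraction agree with $g(x)$ to arbitrary order, and passing to the limit in the $(x)$-adic topology on $\mathbb{R}[[x]]$ identifies the continued fraction with $g(x)$.

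The main obstacle will be the bookkeeping behind the Pad\'e identity: keeping straight the reciprocal substitution $y=1/x$, the split into the ranges $m<n$ and $n\le m<2n$, and the verification that the $A_n$ coming out of the convergent recurrence really coincides with the polynomial dictated by the moment conditions. Once the two families $p_n$ and $q_n$ are placed side by side, orthogonality is invoked at the right spot, and the Casoratian identity is used to handle the low-order coefficients, the $\mathcal{J}$-fraction expansion drops out; this is essentially the content of Wall, Theorem 51.1, and Viennot, Proposition 1, both cited already in the excerpt, which I would invoke directly if a step-by-step derivation became unwieldy.
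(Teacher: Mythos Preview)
The paper does not actually prove this theorem; it is stated as a known result with attributions to Krattenthaler, Viennot, and Wall, and no argument is supplied. Your proposal therefore cannot be compared against a proof in the paper---there is none.

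That said, your outline is the standard classical argument (essentially Wall's Theorem~51.1): introduce the associated polynomials of the second kind, identify the $n$-th convergent $A_n/B_n$ of the $\mathcal{J}$-fraction with $x^n q_n(1/x)\big/ x^n p_n(1/x)$ via the reciprocal substitution, and then use orthogonality of $p_n$ against lower-degree polynomials to obtain the Pad\'e estimate $g(x)B_n(x)-A_n(x)=O(x^{2n})$, concluding in the $(x)$-adic topology. The key steps---the convergent recurrence, the identification $B_n(x)=x^np_n(1/x)$, the vanishing of $L(y^{m-n}p_n(y))$ for $n\le m<2n$, and the Casoratian relation fixing the low-order coefficients---are all correctly flagged. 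The bookkeeping you anticipate (matching $A_n$ with the moment-dictated polynomial in the low range) is indeed the one place where a careless argument can go wrong, but the induction you sketch handles it. Your proposal is a correct and complete plan for proving the cited theorem.
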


Given a family of monic orthogonal polynomials
$$p_{n+1}(x)=(x-\alpha_n)p_n(x)-\beta_n p_{n-1}(x), \qquad
p_0(x)=1,\qquad p_1(x)=x-\alpha_0,$$
we can write $$p_n(x)=\sum_{k=0}^n a_{n,k}x^k.$$ Then we have
$$\sum_{k=0}^{n+1}a_{n+1,k}x^k=(x-\alpha_n)\sum_{k=0}^n
a_{n,k}x^k - \beta_n
\sum_{k=0}^{n-1}a_{n-1,k}x^k$$ from which we deduce
\begin{equation}\label{OP_1}a_{n+1,0}=-\alpha_n
a_{n,0}-\beta_n
a_{n-1,0}\end{equation}
and \begin{equation}\label{OP_2}a_{n+1,k}=a_{n,k-1}-\alpha_n
a_{n,k}-\beta_n a_{n-1,k}\end{equation}
The question immediately arises as to the conditions under which a Riordan array $(g,f)$ can be the coefficient array
 of a family of orthogonal polynomials.  A partial answer is given by the following proposition.

\begin{proposition} Every Riordan array of the form
$$\left(\frac{1}{1+rx+sx^2},\frac{x}{1+rx+sx^2}\right)$$ is the coefficient array of a family of monic orthogonal polynomials.
\end{proposition}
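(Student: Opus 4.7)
The plan is to read off the three-term recurrence satisfied by the row polynomials of the Riordan array directly from their bivariate generating function, and then invoke Favard's theorem (stated in the Appendix above).

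Write $p_n(x) = \sum_{k=0}^n T_{n,k}\, x^k$ for the $n$-th row polynomial, where $T_{n,k} = [z^n]\, g(z) f(z)^k$ with $g(z) = \frac{1}{1+rz+sz^2}$ and $f(z) = \frac{z}{1+rz+sz^2}$. First I will form the bivariate generating function by summing a geometric series in $xf(z)$:
$$\sum_{n \ge 0} p_n(x)\, z^n \;=\; \sum_{k\ge 0} x^k \sum_{n\ge 0} T_{n,k}\, z^n \;=\; g(z) \sum_{k\ge 0} (x f(z))^k \;=\; \frac{g(z)}{1 - x f(z)}.$$
The one non-cosmetic observation is that, for this particular pair $(g,f)$, the factor $1+rz+sz^2$ in $g$ cancels exactly against the factor hidden in the denominator of $1 - xf(z)$, leaving the clean form
$$\sum_{n \ge 0} p_n(x)\, z^n \;=\; \frac{1}{1 - (x-r)z + s z^2}.$$

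From here the argument writes itself. Clearing denominators and matching coefficients of $z^n$ on both sides gives the constant-coefficient three-term recurrence
$$p_{n+1}(x) = (x - r)\, p_n(x) - s\, p_{n-1}(x), \qquad n \ge 1,$$
together with the initial data $p_0(x) = 1$ and $p_1(x) = x - r$. A short check that $p_n$ is monic of degree $n$, which follows from $g(0) = 1$ and $f(z) = z + O(z^2)$ (so that the leading term of $g(z) f(z)^n$ in $z$ is $z^n$), verifies the remaining hypotheses of Favard's theorem, with $\alpha_n = r$ for all $n \ge 0$ and $\beta_n = s$ for all $n \ge 1$. Favard then delivers the conclusion that $(p_n(x))_{n \ge 0}$ is a monic orthogonal polynomial sequence, as claimed.

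I do not foresee a real obstacle here: the substantive content is absorbed into the fortunate cancellation that leaves a denominator of the form $1 - Az + Bz^2$ with $A$ linear in $x$ and $B$ free of both $x$ and $n$, which is exactly what forces the three-term recurrence to have $n$-independent coefficients. The only caveat worth flagging is the degenerate case $s = 0$, where $\beta_n$ vanishes and Favard's theorem no longer applies; this is consistent with the proposition being presented as only a partial answer to the general characterization problem for Riordan arrays that arise as coefficient arrays of orthogonal polynomials.
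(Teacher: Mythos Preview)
Your argument is correct. The bivariate generating function computation is clean, the cancellation you highlight is exactly the point, and the three-term recurrence with $\alpha_n\equiv r$, $\beta_n\equiv s$ is what Favard needs. Your flag on the degenerate case $s=0$ is appropriate given the hypothesis $\beta_n\neq 0$ in the form of Favard's theorem quoted in the Appendix.

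As for comparison: the paper does not actually supply a proof of this proposition. It states the result and remarks only that ``in this case the three-term recurrence coefficients $\alpha_n$ and $\beta_n$ are constants,'' then moves on. The surrounding material in the Appendix (the equations labeled \eqref{OP_1} and \eqref{OP_2}) sets up the coefficient recursions $a_{n+1,k}=a_{n,k-1}-\alpha_n a_{n,k}-\beta_n a_{n-1,k}$ coming from a three-term recurrence, which is the same mechanism you are exploiting, but the paper leaves the verification implicit. Your bivariate generating function route is a tidy way to carry that verification out in one stroke, and it has the minor bonus of making the later proposition identifying $p_n(x)$ with $(\sqrt{s})^n U_n\!\big(\tfrac{x-r}{2\sqrt{s}}\big)$ immediate, since $\frac{1}{1-(x-r)z+sz^2}$ is the standard generating function for those scaled Chebyshev polynomials.
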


We note that in this case the three-term recurrence coefficients $\alpha_n$ and $\beta_n$ are constants. We can strengthen this result as follows.
\begin{proposition} Every Riordan array of the form
$$\left(\frac{1-\lambda x - \mu x^2}{1+rx+sx^2},\frac{x}{1+rx+sx^2}\right)$$ is the coefficient array of a family of monic orthogonal polynomials.
\end{proposition}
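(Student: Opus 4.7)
The plan is as follows. Write $T_{n,k} = [x^n] g(x) f(x)^k$ for the entries of $L = (g,f)$ with $g(x) = (1-\lambda x - \mu x^2)/(1+rx+sx^2)$ and $f(x) = x/(1+rx+sx^2)$, and set $p_n(x) = \sum_{k=0}^n T_{n,k} x^k$. I would first observe that $p_n$ is monic: since $f(x) = x + O(x^2)$ and $g(0) = 1$, we have $[x^n] g f^n = 1$, so $T_{n,n} = 1$. By Favard's theorem it then suffices to produce constants $\alpha_n, \beta_n$ (with $\beta_n \neq 0$) for which $p_{n+1}(x) = (x-\alpha_n) p_n(x) - \beta_n p_{n-1}(x)$, equivalently $T_{n+1,k} = T_{n,k-1} - \alpha_n T_{n,k} - \beta_n T_{n-1,k}$ for all $k \geq 0$ (with the conventions $T_{-1,k} = T_{n,-1} = 0$).

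The key algebraic input is the defining identity $(1 + rx + sx^2) f(x) = x$, which after multiplication by $g f^k$ and extraction of $[x^{n+1}]$ gives
\[
T_{n+1,k+1} + r T_{n,k+1} + s T_{n-1,k+1} = T_{n,k}, \qquad n \geq 0, \ k \geq 0.
\]
Reindexed, this is exactly the desired three-term recurrence on each column $k \geq 1$ with the \emph{constant} choices $\alpha_n = r$ and $\beta_n = s$. The Riordan structure thus disposes of every column except $k = 0$.

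For the $k = 0$ column I would exploit $(1+rx+sx^2) g(x) = 1 - \lambda x - \mu x^2$ to get $T_{n,0} + r T_{n-1,0} + s T_{n-2,0} = [x^n](1-\lambda x - \mu x^2)$, which vanishes for $n \geq 3$, so the recurrence $T_{n+1,0} = -r T_{n,0} - s T_{n-1,0}$ holds for all $n \geq 2$ and is consistent with $\alpha_n = r$, $\beta_n = s$ there. The initial indices are handled by direct calculation: $T_{0,0} = 1$, $T_{1,0} = -\lambda - r$, $T_{2,0} = -\mu + r\lambda + r^2 - s$. Matching against $T_{1,0} = -\alpha_0$ and $T_{2,0} = -\alpha_1 T_{1,0} - \beta_1$ pins down $\alpha_0 = \lambda + r$, $\alpha_1 = r$, and $\beta_1 = \mu + s$. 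Putting everything together,
\[
p_0(x) = 1, \quad p_1(x) = x - (\lambda+r), \quad p_{n+1}(x) = (x-r) p_n(x) - \beta_n p_{n-1}(x),
\]
with $\beta_1 = \mu + s$ and $\beta_n = s$ for $n \geq 2$. Invoking Favard's theorem, under the non-degeneracy hypothesis $s \neq 0$ and $\mu + s \neq 0$, proves that $(p_n)_{n \geq 0}$ is a family of monic orthogonal polynomials.

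The main obstacle is the mismatch between the uniform Riordan recurrence, which predicts the constant pair $(r,s)$ everywhere, and the $k = 0$ column at small $n$, where the numerator $1 - \lambda x - \mu x^2$ perturbs the first two coefficients $\alpha_0$ and $\beta_1$. Once this initial-segment bookkeeping is done by computing $g(x)$'s first coefficients explicitly, the Riordan structure takes over and the orthogonality follows. Non-degeneracy ($s \neq 0$ and $\mu + s \neq 0$) is implicit in the statement; otherwise one only gets formal orthogonality.
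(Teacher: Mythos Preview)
Your proof is correct. The paper states this proposition in its Appendix without proof, so there is no original argument to compare against; your approach---verifying monicity, extracting the three-term recurrence $T_{n+1,k}=T_{n,k-1}-rT_{n,k}-sT_{n-1,k}$ from the Riordan identity $(1+rx+sx^2)f(x)=x$, and then fixing up the $k=0$ column at small $n$ to read off $\alpha_0=\lambda+r$, $\beta_1=\mu+s$, $\alpha_n=r$ ($n\ge1$), $\beta_n=s$ ($n\ge2$) before invoking Favard---is exactly the machinery the Appendix sets up (equations \eqref{OP_1}--\eqref{OP_2} and Favard's theorem). The recurrence coefficients you obtain are also consistent with the paper's subsequent corollary expressing these polynomials as $(\sqrt{s})^nU_n((x-r)/2\sqrt{s})$ minus $\lambda$- and $\mu$-shifted copies, which gives an alternative (Chebyshev-based) route to the same conclusion.
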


\begin{proposition} The elements in the left-most column of
$$L=\left(\frac{1-\lambda x - \mu x^2}{1+rx+sx^2},\frac{x}{1+rx+sx^2}\right)^{-1}$$ are the moments corresponding to the family of orthogonal polynomials with coefficient array $L^{-1}$.
\end{proposition}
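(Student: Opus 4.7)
The plan is to combine the preceding proposition, which identifies $M := L^{-1}$ as the coefficient array of a monic orthogonal polynomial family $(p_n)$, with the Jacobi-continued-fraction form of Favard's theorem cited in the appendix and a direct calculation on the inverse Riordan array. First I would extract the three-term recurrence coefficients from $M = (g,f)$, where $g(x) = (1-\lambda x-\mu x^2)/(1+rx+sx^2)$ and $f(x) = x/(1+rx+sx^2)$. Computing $M_{n,k} = [x^n] g(x) f(x)^k$ for small $n$ gives $p_1(x) = x - (r+\lambda)$ and $p_2(x) = x^2 - (2r+\lambda)x + (r^2 - s + r\lambda - \mu)$; matching these against $p_{n+1}(x) = (x-\alpha_n)p_n(x) - \beta_n p_{n-1}(x)$ yields $\alpha_0 = r+\lambda$, $\alpha_1 = r$, $\beta_1 = s+\mu$. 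Since both $g$ and $f$ share the denominator $1+rx+sx^2$, for $n \geq 2$ the recurrence stabilizes with $\alpha_n = r$ and $\beta_n = s$. Thus the quadratic perturbation $1-\lambda x-\mu x^2$ in the numerator of $g$ alters only the very first entries of the $\alpha$- and $\beta$-sequences.

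Second, by the Jacobi-continued-fraction form of Favard's theorem, the moment sequence of $(p_n)$, with $m_0 = 1$, has generating function
$$G(x) = \cfrac{1}{1-(r+\lambda)x - \cfrac{(s+\mu)x^2}{1-rx - \cfrac{sx^2}{1-rx - \cdots}}}.$$
Its constant tail $w(x) = \mathcal{J}(r,r,\ldots;s,s,\ldots)$ satisfies $w = 1/(1-rx-sx^2 w)$, that is, $sx^2 w^2 - (1-rx)w + 1 = 0$; choosing the branch finite at $x = 0$ yields $w(x) = \bar f(x)/x$, where $\bar f$ denotes the compositional inverse of $f$.

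Third, by the Riordan inversion formula, $L = M^{-1} = \bigl(1/(g\circ \bar f),\, \bar f\bigr)$, so the generating function of the first column of $L$ is $1/g(\bar f(x))$. Since $f(\bar f(x)) = x$ gives $1 + r\bar f + s\bar f^2 = \bar f/x$, this simplifies to
$$\frac{1}{g(\bar f(x))} = \frac{\bar f(x)}{x\bigl(1-\lambda\bar f(x)-\mu \bar f(x)^2\bigr)}.$$
It remains to check that this equals $G(x)$. Substituting $w = \bar f/x$ into $1/\bigl(1-(r+\lambda)x-(s+\mu)x^2 w\bigr)$ and cross-multiplying, the equality reduces to $x = \bar f - rx\bar f - sx\bar f^2$, which is precisely the defining relation $\bar f = x(1+r\bar f+s\bar f^2)$ of the compositional inverse. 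This identifies the first column of $L$ with the moment sequence of $(p_n)$.

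The main obstacle I anticipate is bookkeeping: tracking sign conventions in the three-term recurrence and correctly accounting for the shifts $\alpha_0 = r+\lambda \to \alpha_n = r$ and $\beta_1 = s+\mu \to \beta_n = s$. A clean way to justify the stabilization is to note that the production matrix of $L$ is tridiagonal with constant diagonal $r$ and subdiagonal $s$ from row two onward, perturbed only in its top $2\times 2$ block by the numerator $1-\lambda x-\mu x^2$; everything else then follows formally from the Riordan inversion identity.
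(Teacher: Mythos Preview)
The paper states this proposition in the Appendix as background material and does not supply a proof; it is drawn from the author's earlier work on Riordan arrays and orthogonal polynomials cited as \cite{Barry_moments}. Your argument is correct and follows the natural route: extract the three-term recurrence coefficients $\alpha_0=r+\lambda$, $\alpha_n=r$ ($n\ge 1$), $\beta_1=s+\mu$, $\beta_n=s$ ($n\ge 2$) from the Riordan array $M=L^{-1}$; invoke the Jacobi continued-fraction form of Favard's theorem to write down the moment generating function $G(x)$; and verify via the Riordan inversion formula $L=(1/(g\circ\bar f),\,\bar f)$ that the first-column generating function $1/g(\bar f(x))$ coincides with $G(x)$. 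The algebraic reduction you sketch---showing that the equality $G(x)=\bar f/\bigl(x(1-\lambda\bar f-\mu\bar f^2)\bigr)$ collapses, after substituting $w=\bar f/x$ and cross-multiplying, to the defining relation $x=\bar f-rx\bar f-sx\bar f^2$ of the compositional inverse---is correct.

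The only point that deserves a line of justification rather than a remark is the stabilization $\alpha_n=r$, $\beta_n=s$ for $n\ge 2$. Your production-matrix observation handles it: since $M=(g,f)$ with $f(x)=x/(1+rx+sx^2)$, the $A$-sequence of $M$ is $1+rx+sx^2$, so columns $k\ge 1$ of $M$ obey the constant recursion $M_{n+1,k}=M_{n,k-1}-rM_{n,k}-sM_{n-1,k}$, which is exactly the three-term recurrence with constant $\alpha=r$, $\beta=s$; the numerator $1-\lambda x-\mu x^2$ of $g$ perturbs only column $0$ and hence only $\alpha_0$ and $\beta_1$. With that made explicit, the proof is complete.
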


We have in fact the following proposition, which characterizes those orthogonal polynomials that can be defined by Riordan arrays in terms of the Chebyshev polynomials of the second kind.
\begin{proposition} The Riordan array $\left(\frac{1}{1+rx+sx^2},\frac{x}{1+rx+sx^2}\right)$ is the coefficient array of the modified Chebyshev polynomials of the second kind given by
$$P_n(x)=(\sqrt{s})^n U_n\left(\frac{x-r}{2\sqrt{s}}\right), \quad n=0,1,2,\ldots$$
\end{proposition}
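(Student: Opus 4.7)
My plan is to compute the generating function in a formal variable $z$ of the row polynomials $P_n(x)=\sum_{k\ge 0}T_{n,k}x^k$ of the Riordan array $\left(\frac{1}{1+rz+sz^2},\frac{z}{1+rz+sz^2}\right)$, and then match the result to a rescaled form of the classical generating function $\sum_{n\ge 0}U_n(y)T^n=\frac{1}{1-2yT+T^2}$ for the Chebyshev polynomials of the second kind.

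First, by the Fundamental Theorem of Riordan Arrays applied to the ordinary generating function $\frac{1}{1-xz}$ of the sequence $1,x,x^2,\dots$, I would obtain
$$\sum_{n\ge 0}P_n(x)\,z^n \;=\; \frac{1}{1+rz+sz^2}\cdot\frac{1}{1-x\cdot\frac{z}{1+rz+sz^2}}\;=\;\frac{1}{1-(x-r)z+sz^2}.$$

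Next, I rescale: set $T=\sqrt{s}\,z$ and $y=\dfrac{x-r}{2\sqrt{s}}$, so that $2yT=(x-r)z$ and $T^{2}=sz^{2}$. Substituting gives
$$\frac{1}{1-(x-r)z+sz^2}\;=\;\frac{1}{1-2yT+T^2}\;=\;\sum_{n\ge 0}U_n(y)\,T^n\;=\;\sum_{n\ge 0}(\sqrt{s})^n\,U_n\!\left(\frac{x-r}{2\sqrt{s}}\right)z^n.$$
Comparing coefficients of $z^n$ on both sides yields $P_n(x)=(\sqrt{s})^n U_n\!\left(\frac{x-r}{2\sqrt{s}}\right)$, as claimed.

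There is essentially no hard computational step here; the one thing to watch is the appearance of $\sqrt{s}$, which is harmless because $U_n(y)$ is a polynomial in $y$ of the same parity as $n$, so odd powers of $\sqrt{s}$ always pair with odd powers of $x-r$, and the resulting $P_n(x)$ is a genuine polynomial in $x$ with coefficients polynomial in $r$ and $s$. The identity therefore holds as a formal polynomial identity valid for arbitrary $s$, with the case $s=0$ recovered by an obvious limit. As a sanity check, both sides satisfy the three-term recurrence $P_n(x)=(x-r)P_{n-1}(x)-sP_{n-2}(x)$ with $P_0=1$ and $P_1(x)=x-r$; this is exactly the Chebyshev recurrence $U_n=2yU_{n-1}-U_{n-2}$ after the rescaling, reconfirming by Favard's theorem (recalled in the Appendix) the orthogonality asserted in the preceding proposition.
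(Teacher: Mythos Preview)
Your argument is correct. The paper states this proposition in the Appendix without supplying a proof, so there is nothing to compare against directly; your approach via the bivariate generating function $\sum_{n\ge 0}P_n(x)z^n=(g(z),f(z))\cdot\frac{1}{1-xz}=\frac{1}{1-(x-r)z+sz^2}$ followed by the substitution $T=\sqrt{s}\,z$, $y=(x-r)/(2\sqrt{s})$ into the classical Chebyshev generating function is the standard and cleanest route, and your remark on parity handling the $\sqrt{s}$ is apt.
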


\begin{corollary}
The Riordan array $\left(\frac{1-\lambda x - \mu x^2}{1+rx+sx^2},\frac{x}{1+rx+sx^2}\right)$ is the coefficient array of the generalized Chebyshev polynomials of the second kind given by
$$Q_n(x)=(\sqrt{s})^n U_n\left(\frac{x-r}{2\sqrt{s}}\right)-\lambda(\sqrt{s})^{n-1} U_{n-1}\left(\frac{x-r}{2\sqrt{s}}\right)-\mu(\sqrt{s})^{n-2} U_{n-2}\left(\frac{x-r}{2\sqrt{s}}\right), \quad n=0,1,2,\ldots$$

\end{corollary}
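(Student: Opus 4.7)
The plan is to compute the bivariate generating function of the row polynomials of the given Riordan array directly, and then recognise the factor structure that reduces the corollary to the preceding proposition about $P_n(x)=(\sqrt{s})^n U_n\bigl((x-r)/(2\sqrt{s})\bigr)$.

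Concretely, writing $Q_n(x)=\sum_k T_{n,k}x^k$ for the $n$-th row polynomial of $\left(\frac{1-\lambda z-\mu z^2}{1+rz+sz^2},\frac{z}{1+rz+sz^2}\right)$, the standard identity
$$\sum_{n\ge 0} Q_n(x)\,z^n \;=\; \sum_{k\ge 0}x^k\sum_{n\ge 0} T_{n,k}z^n \;=\; \frac{g(z)}{1-xf(z)}$$
gives, after clearing the common denominator $1+rz+sz^2$,
$$\sum_{n\ge 0} Q_n(x)\,z^n \;=\; \frac{1-\lambda z-\mu z^2}{1-(x-r)z+sz^2}.$$
This is the first main step and the only nontrivial calculation; everything else is bookkeeping.

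Next I would invoke the preceding proposition, which asserts that the row polynomials $P_n(x)$ of $\left(\frac{1}{1+rz+sz^2},\frac{z}{1+rz+sz^2}\right)$ satisfy
$$\sum_{n\ge 0} P_n(x)\,z^n \;=\; \frac{1}{1-(x-r)z+sz^2} \;=\; \sum_{n\ge 0} (\sqrt{s})^n U_n\!\left(\tfrac{x-r}{2\sqrt{s}}\right) z^n,$$
the right-hand equality following from the classical generating function $\sum U_n(y)t^n=1/(1-2yt+t^2)$ after the substitution $t=\sqrt{s}\,z$, $y=(x-r)/(2\sqrt{s})$. Multiplying this identity through by $1-\lambda z-\mu z^2$ and matching coefficients of $z^n$ yields
$$Q_n(x) \;=\; P_n(x) - \lambda P_{n-1}(x) - \mu P_{n-2}(x),$$
with the convention $P_{-1}=P_{-2}=0$.

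Finally I would substitute the closed form $P_n(x)=(\sqrt{s})^n U_n\bigl((x-r)/(2\sqrt{s})\bigr)$ into this three-term relation to obtain exactly the stated formula for $Q_n(x)$. There is no real obstacle here; the only thing to be careful about is the boundary cases $n=0,1$ (where $U_{-1}=0$ and one must interpret $(\sqrt{s})^{-1}U_{-1}$ and $(\sqrt{s})^{-2}U_{-2}$ as $0$), which is handled automatically by the generating-function derivation and matches the Riordan-array entries $Q_0=1$ and $Q_1=x-r-\lambda$.
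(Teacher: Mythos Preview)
Your argument is correct and is exactly the intended route: the paper states this result as a corollary of the preceding proposition without proof, and the natural deduction is precisely the factorization $\frac{1-\lambda z-\mu z^2}{1-(x-r)z+sz^2}=(1-\lambda z-\mu z^2)\sum_{n\ge 0}P_n(x)z^n$ that you carry out. Your handling of the bivariate generating function $g(z)/(1-xf(z))$ and of the boundary cases $n=0,1$ is fine.
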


The Hankel transform \cite{Layman} of a sequence $a_n$ is the sequence $h_n$  of determinants $| a_{i+j}|_{0 \le i,j \le n}$. For instance, the Hankel transform of the Catalan numbers is given by the all $1$'s sequence
$$1,1,1,1,1,\ldots.$$ The Hankel transform can have combinatorial significance; for instance, the Hankel transform of the ternary numbers begins
$$1, 2, 11, 170, 7429, 920460, 323801820, 323674802088, \ldots.$$
This sequence (\seqnum{A051255}) counts the number of cyclically symmetric transpose complement plane partitions in a $(2n+2) \times (2n+2) \times (2n+2)$ box.

If the sequence $a_n$ has a generating function $g(x)$, then the bivariate generating function of the Hankel matrix $|a_{i+j}|_{i,j \ge 0}$ is given by
$$\frac{x g(x)- y g(y)}{x-y}.$$

In the case that a sequence $a_n$ has g.f. $g(x)$ expressible in the continued fraction form \cite{Wall}
$$g(x)=\cfrac{a_0}{1-\alpha_0 x-
\cfrac{\beta_1 x^2}{1-\alpha_1 x-
\cfrac{\beta_2 x^2}{1-\alpha_2 x-
\cfrac{\beta_3 x^2}{1-\alpha_3 x-\cdots}}}}$$ then
we have the Heilermann formula \cite{Kratt}
\begin{equation}\label{Kratt} h_n = a_0^{n+1} \beta_1^n\beta_2^{n-1}\cdots \beta_{n-1}^2\beta_n=a_0^{n+1}\prod_{k=1}^n
\beta_k^{n+1-k}.\end{equation}
Note that this independent from $\alpha_n$.

We note that $\alpha_n$ and $\beta_n$ are in general not integers (even if both $a_n$ and $h_n$ are integer valued).  It is clear also that a Hankel transform has an infinite number of pre-images, since we are free to assign values to the $\alpha_n$ coefficients. For instance, a sequence $a_n$  and its binomial transform $\sum_{k=0}^n \binom{n}{k}$ have the same Hankel transform, and the expansion of the INVERT transform of $g(x)$ given by
$\frac{g(x}{1-xg(x)}$ and that of $g(x)$ will have the same Hankel transform.

 Somos $4$ sequences are most commonly associated with the
$x$-coordinate of rational points on an elliptic curve \cite{Hone, Swart, Poorten}. The link between these sequences and Hankel transforms is
made explicit in Theorem 7.1.1 of \cite{Swart}, for instance. Letting $nP$ denote the $n$-fold sum $P+ \cdots +P$ of points on an elliptic curve $E$, this
result implies the following: if $P=(\bar{x},\bar{y})$ and $Q=(x_0,y_0)$ are two distinct non-singular rational points on an elliptic curve
$E$, denote,
for all $n \in \mathbb{Z}$ such that $Q+nP\neq \mathcal{O}$ (the point at infinity on $E$), by $(x_n,y_n)$ the coordinates of the point
$Q+nP$.  Then under these circumstances the numbers determined by
$$s_n =(-1)^{\binom{n+1}{2}}(x_{n-1}-\bar{x})(x_{n-2}-\bar{x})^2\cdots(x_{1}-\bar{x})^{n-1}(x_0-\bar{x})^n s_0\left(\frac{s_0}{s_{-1}}\right)^n$$
are elements of a Somos 4 sequence (given appropriate $s_0,s_{-1}\neq 0$). We can re-write this as
$$s_n=s_0\left(\frac{s_0}{s_{-1}}\right)^n\prod_{k=0}^{n-1} (\bar{x}-x_k)^{n-k},$$ and we see that this is in the form of a Hankel transform.

\bigskip
\hrule
\bigskip
\noindent 2010 {\it Mathematics Subject Classification}: Primary 11B37;
 Secondary 11B83, 15B36,11B37, 05A15, 11Y55, 14H52, 42C05.
\noindent \emph{Keywords:}  Convolution recurrence, generating function, Catalan number, Schr\"oder numbers, Riordan array, Hankel transform, Somos sequence, elliptic curve, orthogonal polynomial.

\bigskip
\hrule
\bigskip
\noindent (Concerned with sequences
\seqnum{A000045},
\seqnum{A000108},
\seqnum{A001003},
\seqnum{A001519},
\seqnum{A004148},
\seqnum{A004149},
\seqnum{A006318},
\seqnum{A006769},
\seqnum{A014431},
\seqnum{A023431},
\seqnum{A025235},
\seqnum{A025243},
\seqnum{A025272},
\seqnum{A048990},
\seqnum{A084782},
\seqnum{A085139},
\seqnum{A086581},
\seqnum{A091561},
\seqnum{A128720},
\seqnum{A157101},
\seqnum{A162985},
\seqnum{A174171},
\seqnum{A174403},
\seqnum{A174404},
\seqnum{A178376},
\seqnum{A178384},
\seqnum{A178622},
\seqnum{A178627},
\seqnum{A187256}, and
\seqnum{A236337}.)

\end{document}